\newtheorem{thm}{Theorem}[section]
\newtheorem{lem}[thm]{Lemma}
\newtheorem{prop}[thm]{Proposition}
\newtheorem{cor}[thm]{Corollary}
\theoremstyle{definition}
\newtheorem{example}[thm]{Example}
\theoremstyle{rem}
\newtheorem{rem}[thm]{Remark}
\numberwithin{equation}{section}
\begin{document}

\title[B\"{o}ttcher coordinates]
{A construction of B\"{o}ttcher coordinates for holomorphic skew products}  

\author[K. Ueno]{Kohei Ueno}
\address{Daido University, Nagoya 457-8530, Japan}
\curraddr{}
\email{k-ueno@daido-it.ac.jp}
\thanks{This work was supported by the Research Institute for Mathematical Sciences, 
           a Joint Usage/Research Center located in Kyoto University.}

\subjclass[2010]{Primary 32H50; Secondary 37F50}
\keywords{Complex dynamics, B\"{o}ttcher coordinates, superattracting fixed points, 
              skew products, Newton polygons, blow-ups, branched coverings}

\date{}
\dedicatory{}

\begin{abstract}
Let $f(z,w)=(p(z),q(z,w))$ be a holomorphic skew product
with a superattracting fixed point at the origin.
Under one or two assumptions,
we prove that $f$ is conjugate to a monomial map
on an invariant open set whose closure contains the origin.
The monomial map and the open set are determined by
the degree of $p$ and the Newton polygon of $q$.
\end{abstract}

\maketitle

\section{Introduction}

Let $p : (\mathbb{C}, 0) \to (\mathbb{C}, 0)$ be 
a holomorphic germ with a superattracting fixed point at the origin.
We may write $p(z) = a_{\delta} z^{\delta} + O(z^{\delta + 1})$,
where $a_{\delta} \neq 0$ and $\delta \geq 2$.
Let $p_0(z) = a_{\delta} z^{\delta}$.
B\"{o}ttcher's theorem \cite{b} provides 
a conformal function $\varphi$ defined on a neighborhood of the origin, 
with $\varphi (0) = 0$ and $\varphi' (0) = 1$, 
that conjugates $p$ to $p_0$.
This function is called the B\"{o}ttcher coordinate for $p$ at the origin
and obtained as the limit of 
the compositions of $p_0^{-n}$ and $p^n$,
where $p^n$ denotes the $n$-th iterate of $p$.
The branch of $p_0^{-n}$ is taken such that $p_0^{-n} \circ p_0^n = id$.
We refer to \cite{m} for details.

Several studies have been made
toward the generalization of B\"{o}ttcher's theorem to higher dimensions.
For example,
Ushiki~\cite{ushiki}, Ueda~\cite{ueda},
Buff, Epstein and Koch~\cite{bek} studied the case in which holomorphic germs,
with superattracting fixed points, have the B\"{o}ttcher coordinates
on neighborhoods of the points.
The germs in \cite{ushiki} are conjugate to monomial maps, 
whereas the germs in \cite{ueda} and \cite{bek} are conjugate to 
homogeneous and quasihomogeneous maps,
respectively.

However, 
B\"{o}ttcher's theorem does not extend to higher dimensions entirely
as pointed out by Hubbard and Papadopol \cite{hp}. 
If two germs are conjugate,
then the two critical orbits should be preserved by the conjugacy.
Although the critical orbit of a normal form is expected to be simple,
that of a given germ is usually very complicated.

Rigidity is a keyword for the study of the local dynamics of superattracting germs.
Favre \cite{f} classified attracting rigid germs in dimension 2;
a germ is called rigid if the union of the critical sets of all its iterates is 
a divisor with normal crossing and forward invariant.
Favre and Jonsson \cite{fj} have built up a general theorem: 
for any superattracting germ in dimension $2$,
it can be blown up to a rigid germ with a fixed point on the exceptional divisor.
See Theorems C and 5.1 in \cite{fj} for details.
Therefore,
the original germ is conjugate to a normal form
on an open set whose closure contains the superattracting fixed point.
One can also find this theorem in a survey article \cite{a}
on the local dynamics
of holomorphic germs with fixed points of several types 
in one and higher dimensions.

In this paper
we deal with holomorphic skew products
with superattracting fixed points at the origin,
and construct B\"{o}ttcher coordinates 
on invariant open sets whose closure contain the origin.
This is a continuation of our studies \cite{u1, u2, u3},
and gives a well organized consequence that includes the main results 
for the skew product and superattracting case in \cite{u2, u3}.
Moreover,
we obtain statements on the uniqueness and extension of the B\"{o}ttcher coordinates,
which are similar to those in \cite{u1}.

For the study of the (global) dynamics of polynomial skew products,
we refer to \cite{fg, j},
in which the main topics are the Green functions, currents and measures. 
Lilov \cite{l} studied the local and semi-local dynamics of 
holomorphic skew products near a superattracting invariant fiber.
As natural extensions of the one dimensional results,
he obtained nice normal forms on neighborhoods of periodic points
which are geometrically attracting, parabolic and Siegel on fiber direction,
except the superattracting case.
See also \cite{abdpr, bfp, pr, ps, pv}
for the dynamics of skew products near an invariant fiber
of different types.
In particular,
wandering Fatou components are constructed in \cite{abdpr}
for polynomial skew products near a parabolic fiber.

Let us state our main results precisely.
Let $f : (\mathbb{C}^2, 0) \to (\mathbb{C}^2, 0)$ be 
a holomorphic germ of the form $f(z,w)=(p(z),q(z,w))$,
which is called a holomorphic skew product in this talk.
We assume that it has a superattracting fixed point at the origin; 
that is, $f(0) = 0$ and the eigenvalues of $Df(0)$ are both zero.
Then we may write $p(z) = a_{\delta} z^{\delta} + O(z^{\delta + 1})$,
where $a_{\delta} \neq 0$ and $\delta \geq 2$, and
$q(z,w) = b z + \sum_{i, j \geq 0, i+j \geq 2} b_{ij} z^{i} w^{j}$.
Let $bz = b_{10} z^1 w^0$ and $q(z,w) = \sum b_{ij} z^{i} w^{j}$ for short.
It is clear that the dominant term of $p$ is $a_{\delta} z^{\delta}$.
On the other hand,
there is a ``dominant'' term $b_{\gamma d} z^{\gamma} w^{d}$ of $q$ 
determined by the degree of $p$ and the Newton polygon of $q$; thus
\[
p(z) = a_{\delta} z^{\delta} + O(z^{\delta + 1})
\text{ and }
q(z,w) = b_{\gamma d} z^{\gamma} w^{d} + \text{$\sum_{(i,j) \neq (\gamma, d)}$} b_{ij} z^{i} w^{j}.
\]
More precisely,
$b_{\gamma d} z^{\gamma} w^{d}$ is dominant on an open set
\[
U = U_r = \{ |z|^{l_1 + l_2} < r^{l_2} |w|, |w| < r|z|^{l_1} \}
\]
for some rational numbers $0 \leq l_1 < \infty$ and $0 < l_2 \leq \infty$,
which are also determined by the degree of $p$ and the Newton polygon of $q$.

Let $f_0 (z,w) = (a_{\delta} z^{\delta}, b_{\gamma d} z^{\gamma} w^{d})$
and $||(z,w)|| = \max \{ |z|, |w| \}$.

\begin{lem}\label{main lemma}
If $d \geq 2$, 
then
\begin{enumerate}
\item for any small $\varepsilon >0$ 
there is $r >0$ such that
$||f - f_0|| < \varepsilon ||f_0||$ on $U_r$,
and 
\item $f(U_r) \subset U_r$ for small $r >0$.
\end{enumerate}
\end{lem}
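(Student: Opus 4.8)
The plan is to prove (1) by estimating the two coordinates of $f-f_0$ separately, and then to deduce (2) from (1) together with the inequalities that the construction of $(l_1,l_2)$ and $(\gamma,d)$ builds into the monomial map $f_0$.

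For (1), write $f-f_0=\bigl(p(z)-a_{\delta}z^{\delta},\ q(z,w)-b_{\gamma d}z^{\gamma}w^{d}\bigr)$. Since $\|f_0\|=\max\{|a_{\delta}z^{\delta}|,|b_{\gamma d}z^{\gamma}w^{d}|\}$, it suffices to bound the first coordinate by $\varepsilon|a_{\delta}z^{\delta}|$ and the second by $\varepsilon|b_{\gamma d}z^{\gamma}w^{d}|$. The first bound is immediate: $p(z)-a_{\delta}z^{\delta}=O(z^{\delta+1})$, so $|p(z)-a_{\delta}z^{\delta}|\le C|z|\,|a_{\delta}z^{\delta}|$ near the origin, and on $U_r$ one has $|z|<r^{1+1/l_2}$, which is $<\varepsilon/C$ once $r$ is small. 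For the second bound, factor $q(z,w)-b_{\gamma d}z^{\gamma}w^{d}=z^{\gamma}w^{d}\sum_{(i,j)\neq(\gamma,d)}b_{ij}z^{i-\gamma}w^{j-d}$ and show the sum is $o(1)$ uniformly on $U_r$ as $r\to0$. I would split the support of $q$ into a fixed finite box $S_{\mathrm{fin}}$ around $(\gamma,d)$ and its cofinite complement $S_{\infty}$. On $S_{\mathrm{fin}}$ each $|z^{i-\gamma}w^{j-d}|$ tends to $0$ uniformly on $U_r$: this is exactly the assertion that $b_{\gamma d}z^{\gamma}w^{d}$ is the dominant term on $U_r$ (the relevant exponents being controlled by the position of $(i,j)$ relative to $(\gamma,d)$ and to the two edge slopes $l_1$ and $l_1+l_2$ read off from the Newton polygon). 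On $S_{\infty}$ I would use that $q$ converges on some polydisc $\{|z|,|w|<R\}$, so $|b_{ij}|\le MR^{-(i+j)}$; combined with the bounds $|z|<r^{1+1/l_2}$, $|w|<r|z|^{l_1}$ and $|w|^{-d}<r^{l_2 d}|z|^{-(l_1+l_2)d}$ valid throughout $U_r$, the box $S_{\mathrm{fin}}$ can be chosen large enough that each term $|b_{ij}z^{i-\gamma}w^{j-d}|$ with $(i,j)\in S_{\infty}$ is at most a fixed positive power of $r$ times $R^{-(i+j)}$, and the resulting geometric series is $o(1)$ as $r\to0$.

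For (2), I would first record, using the two estimates proved along the way in (1), that on $U_r$ one has $|p(z)|=(1+o(1))|a_{\delta}z^{\delta}|$ and $|q(z,w)|=(1+o(1))|b_{\gamma d}z^{\gamma}w^{d}|$ as $r\to0$, uniformly. Writing $(z',w')=f(z,w)$, the inclusion $f(U_r)\subset U_r$ means $|z'|^{l_1+l_2}<r^{l_2}|w'|$ and $|w'|<r|z'|^{l_1}$. Substituting the above and then using the defining inequalities of $U_r$ to trade powers of $|w|$ for powers of $|z|$ and $r$, each of these reduces to an inequality of the form $|z|^{A}<(\text{const})\,r^{-B}$, where $A$ is a nonnegative combination of $\delta,\gamma,d,l_1,l_2$ --- nonnegative precisely because the vertex $(\gamma,d)$ and the slopes $l_1,l_1+l_2$ are matched to $\delta$ in the way these data were selected from the Newton polygon --- and $B>0$ because $d\ge2$. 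Since $|z|<r^{1+1/l_2}\le1$ on $U_r$, the left side is $\le$ const while the right side tends to $\infty$, so the inequality holds for small $r$; the same factor $r^{-B}$ absorbs the $o(1)$ coming from $f-f_0$. The degenerate cases $l_1=0$ and $l_2=\infty$, in which one of the two defining inequalities of $U_r$ (hence one of the two things to check) becomes vacuous, are treated separately with only the surviving inequality.

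The step I expect to be the main obstacle is the bookkeeping in (1): promoting the pointwise dominance of $z^{\gamma}w^{d}$ to a bound uniform over all of $U_r$ simultaneously. This forces a case analysis according to the signs of $i-\gamma$ and $j-d$ and according to whether $(i,j)$ lies on an edge of, or strictly outside, the Newton polygon, and a careful choice of $S_{\mathrm{fin}}$ so that every exponent surviving in $S_{\infty}$ is manifestly nonnegative. Once this quantitative dominance is in place, (2) is a direct substitution whose only real content is again the nonnegativity of the exponents $A$ --- which is built into the definition of $(l_1,l_2,\gamma,d)$ --- together with disposing of the two boundary cases.
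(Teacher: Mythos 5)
Your proposal is correct in substance and rests on the same Newton--polygon inequalities as the paper ($\gamma+l_1d\ge l_1\delta$, $i+l_1j\ge\gamma+l_1d$, $i+(l_1+l_2)j\ge\gamma+(l_1+l_2)d$ and $(l_1+l_2)^{-1}\gamma+d\le\delta$), but the technical route for the key uniformity estimate is different. The paper first normalizes $p(z)=z^{\delta}$ via the one-variable B\"ottcher coordinate and then performs the monomial substitutions $|w|=|z|^{l_1}|c|$ and/or $|z|=|t||c|^{l_2^{-1}}$ (as the case requires), under which $U_r$ becomes $\{0<|t|<r,\,0<|c|<r\}$ and $|\eta|=|q-z^{\gamma}w^{d}|/|z^{\gamma}w^{d}|$ is majorized by $\sum|b_{ij}|\,|t|^{\tilde{i}-\tilde{\gamma}}|c|^{\tilde{j}-\tilde{d}}$ with all exponents nonnegative and, off the vertex, at least one bounded below by a fixed positive amount; convergence of this majorant on the polydisc is inherited directly from the convergence of $q$ (it is the substituted series of $q$ divided by a fixed monomial), so no finite-box/tail splitting and no Cauchy estimates on the $b_{ij}$ are needed, and uniform smallness on $U_r$ follows at once. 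Your box-plus-tail argument buys elementarity (you stay in the $(z,w)$ coordinates and keep general $p$) at the cost of exactly the bookkeeping you flag, and there is one slip in it: a tail bound of the form ``a fixed positive power of $r$ times $R^{-(i+j)}$'' does not sum over a cofinite index set unless $R>1$, which is not guaranteed for a germ; you need the geometric ratio itself to be small, e.g.\ a bound $Mr^{a}\,(r^{b}/R)^{i+j-K}$ with $a,b>0$, or a preliminary rescaling so that $q$ converges on a polydisc of radius greater than $1$ --- precisely the convergence issue that the paper's change of variables dissolves. Your proof of (2) is essentially the paper's: after dominance, trading $|w|<r|z|^{l_1}$ and $|w|>r^{-l_2}|z|^{l_1+l_2}$ reduces the two inclusions to $r^{d}|z|^{\tilde{\gamma}}<Cr$ and $|z|^{(l_1+l_2)(\delta-d)-\gamma}<Cr^{-l_2(d-1)}$, with nonnegative $|z|$-exponents by the choice of $(\gamma,d,l_1,l_2)$ and with $d\ge2$ supplying the decisive factor $r^{d}\ll r$; the paper writes the same computation as $|q/p^{l_1}|\le C|c|^{d}<Cr^{d}<r$ and $|p^{1+l_1l_2^{-1}}/q^{l_2^{-1}}|\le C|t|^{d}<Cr^{d}<r$. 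One cosmetic point: when $l_2^{-1}=0$ (Cases 1 and 2) the first defining inequality of $U_r$ does not become vacuous but becomes $|z'|<r$, which is then trivially satisfied because $|p(z)|\asymp|z|^{\delta}$ with $\delta\ge2$; so your separate treatment of the degenerate cases goes through, just not because the condition disappears.
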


In particular,
$f$ is rigid on $U_r$.
As in the one dimensional case,
this lemma induces a conjugacy on $U_r$ 
from $f$ to the monomial map $f_0$.

\begin{thm}\label{main theorem}
If $d \geq 2$, then
there is a biholomorphic map $\phi$ defined on $U_r$
that conjugates $f$ to $f_0$
for small $r > 0$.
Moreover,
for any small $\varepsilon > 0$, there is $r > 0$ such that
$||\phi - id|| < \varepsilon ||id||$ on $U_r$.
\end{thm}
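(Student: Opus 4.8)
The plan is to construct $\phi$ as a limit of the natural telescoping compositions $\phi_n = f_0^{-n} \circ f^n$, mimicking the one-dimensional B\"{o}ttcher construction. First I would fix a choice of branch of $f_0^{-n}$ compatible with $f_0^n$ on $U_r$; since $f_0(z,w) = (a_\delta z^\delta, b_{\gamma d} z^\gamma w^d)$ is a monomial map, its iterates and inverse branches are governed by the matrix $\begin{pmatrix} \delta & 0 \\ \gamma & d \end{pmatrix}$ acting on $(\log|z|, \log|w|)$, and the invariance $f(U_r) \subset U_r$ from Lemma~\ref{main lemma}(2) together with the geometry of $U_r$ (which is cut out by monomial inequalities, hence mapped nicely by $f_0^{\pm n}$) ensures the branch can be pinned down consistently. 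Then I would write $\phi_{n+1} \circ \phi_n^{-1}$ in terms of $f_0^{-(n+1)} \circ f \circ f_0^n$ evaluated along the orbit and use Lemma~\ref{main lemma}(1), the estimate $\|f - f_0\| < \varepsilon \|f_0\|$ on $U_r$, to bound the ratio $\phi_{n+1}/\phi_n$ (componentwise) by something like $1 + C\varepsilon/d^n$ or $1 + C\varepsilon \rho^n$ for a contraction factor $\rho < 1$ coming from the eigenvalues $\delta, d \geq 2$ of the monomial matrix.

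The key analytic step is to show this product converges uniformly on $U_r$. I would track the two coordinates separately: the first coordinate of $\phi_n$ is just the classical one-dimensional B\"{o}ttcher coordinate for $p$ (since $f$ is a skew product, the $z$-dynamics is autonomous), so its convergence is immediate from B\"{o}ttcher's theorem and in particular $\phi_n^{(1)} \to \varphi(z)$ with $\varphi$ conformal near $0$. For the second coordinate I would take logarithms: setting $w_n$ for the second coordinate of $f^n(z,w)$ and comparing with the second coordinate of $f_0^n$, the recursion $w_{n+1} = b_{\gamma d} z_n^\gamma w_n^d (1 + \text{error})$ with error controlled by (1) gives, after dividing by the $f_0$-orbit and taking logs, a telescoping sum $\sum_n \frac{1}{d^{n+1}} \log(1 + \text{error}_n)$, which converges geometrically because the errors are uniformly small on $U_r$ and the $d^{-n}$ weights decay. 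Exponentiating recovers $\phi$, and the uniform smallness of the tail gives $\|\phi - id\| < \varepsilon \|id\|$ on $U_{r'}$ for $r'$ small, after absorbing the $\varepsilon$ from (1); holomorphicity of the limit follows from uniform convergence of holomorphic maps (Weierstrass), and biholomorphy on $U_r$ follows since $\phi$ is $C^1$-close to the identity and equivariant, or alternatively by constructing the inverse as the analogous limit $f^{-n} \circ f_0^n$ with the branch of $f^{-n}$ dictated by the skew-product structure.

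A point requiring care is that $U_r$ is not a polydisc or ball but a region pinched at the origin and defined by the competing monomial inequalities $|z|^{l_1+l_2} < r^{l_2}|w|$ and $|w| < r|z|^{l_1}$; I must check that the whole forward orbit $\{f^n(z,w)\}$ stays in a region where the estimate (1) is valid and where the chosen branches of $f_0^{-n}$ are the correct ones. This is exactly what Lemma~\ref{main lemma}(2) buys, but one should verify that shrinking $r$ at the start (so that (1) holds with the desired $\varepsilon$) does not destroy the invariance — the two statements of Lemma~\ref{main lemma} must be applied at a common value of $r$.

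The main obstacle I anticipate is not the convergence itself but the bookkeeping of branches and the verification that $\phi$ extends holomorphically (not just continuously) across the locus where $z = 0$ or $w = 0$ meets $\overline{U_r}$, and that the product defining the second coordinate does not vanish identically or blow up — i.e., that $\phi$ is genuinely a biholomorphism of $U_r$ onto its image and not merely a holomorphic self-map. I expect this to be handled by the $\|\phi - id\|$ estimate combined with the open mapping theorem and an explicit construction of the inverse, but the geometry of $U_r$ makes the uniformity statements the delicate part, and one likely needs the hypothesis $d \geq 2$ precisely to get the geometric decay $d^{-n}$ that makes the telescoping sum summable (when $d = 1$ the $w$-recursion is only ``parabolic-like'' and the sum need not converge, which is consistent with the lemma's hypothesis).
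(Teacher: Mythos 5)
Your construction is the same as the paper's: form $\phi_n=f_0^{-n}\circ f^n$, pass to logarithmic coordinates (the paper lifts by $\pi(Z,W)=(e^Z,e^W)$, so that $f_0$ becomes the linear map $(Z,W)\mapsto(\delta Z,\gamma Z+dW)$), and sum a telescoping series whose terms decay like $\delta^{-n}$ and $d^{-n}$; the hypothesis $d\geq 2$ enters exactly where you say it does, and the estimate $\|\phi-id\|<\varepsilon\|id\|$ plus Riemann's extension theorem across $\{zw=0\}$ recovers holomorphicity on all of $U_r$. One bookkeeping point you should make explicit: the second component of $f_0^{-n}$ carries the factor $z^{-\gamma_n/(\delta^n d^n)}$ with $\gamma_n=\sum_{j=1}^n\delta^{n-j}d^{j-1}\gamma$, so the error in the $w$-coordinate is not only $\sum d^{-(n+1)}\log(1+\eta_n)$ but also a $\zeta$-contribution weighted by $\gamma_{n+1}/(\delta^{n+1}d^{n+1})$; this is still summable, but it must be checked.

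The genuine gap is the biholomorphicity. The convergence argument yields only the $C^0$ bound $\|\phi-id\|<\varepsilon\|id\|$, and on a region pinched at the origin such a relative sup-norm bound does not imply injectivity; no $C^1$ estimate has been established (Cauchy estimates would require working on a shrunken domain and still would not immediately give global injectivity on the non-convex wedge $U_r$). Your alternative, defining the inverse as $\lim f^{-n}\circ f_0^n$, is circular in the relevant respect: choosing a well-defined branch of $f^{-n}$ on the appropriate region amounts to knowing that $f$ is injective there, which is the point to be proved. The paper closes this gap in Section 6.3 by proving injectivity of the lift $F$ itself: Rouch\'e's theorem shows $P$ is injective on a half-plane $H'$ and, fiberwise, $Q_Z$ is injective on a slightly shrunken slab $V'_Z$, whence $F^n$, $\Phi_n$ and $\Phi$ are injective there, and injectivity of $\phi$ follows since $\Phi\sim id$; note that this forces a slight shrinking of $r$ (by an amount of order $\tilde\varepsilon$) relative to the $r$ of Lemma \ref{main lemma}. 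Some argument of this kind is indispensable, and "open mapping theorem plus closeness to the identity" does not substitute for it.
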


We call $\phi$ the B\"{o}ttcher coordinate for $f$ on $U$,
and construct it as the limit of 
the compositions of $f_0^{-n}$ and $f^n$.

Let us give the definition of the Newton polygon of $q$,
and explain how the dominant term $b_{\gamma d} z^{\gamma} w^{d}$ and 
the rational numbers $l_1$ and $l_2$ are determined.
Let $q(z,w) = \sum b_{ij} z^{i} w^{j}$.
We define the Newton polygon $N(q)$ of $q$ as 
the convex hull of the union of $D(i,j)$ with $b_{ij} \neq 0$,
where $D(i,j) = \{ (x,y) \ | \ x \geq i, y \geq j \}$. 
Let $(n_1, m_1)$, $(n_2, m_2)$, $\cdots, (n_s, m_s)$ be the vertices of $N(q)$,
where $n_1 < n_2 < \cdots < n_s$ and $m_1 > m_2 > \cdots > m_s$.
Let $T_k$
be the y-intercept of the line $L_k$ passing the vertices $(n_k, m_k)$ and $(n_{k+1}, m_{k+1})$
for each $1 \leq k \leq s-1$.
Note that
\[
U = \{ |z|^{l_1 l_2^{-1} + 1} < r |w|^{l_2^{-1}}, |w| < r|z|^{l_1} \},
\]
where $0 \leq l_1 < \infty$ and $0 \leq l_2^{-1} < \infty$,
and so $U = \{ |z| < r, |w| < r |z|^{l_1} \}$ if $l_2^{-1} = 0$.

\vspace{2mm}
\begin{itemize}
\item[\underline{Case 1}] 
If $s = 1$, then $N(q)$ has the only one vertex, which is denoted by $(\gamma, d)$. \\
For this case, we define
$l_1 =  l_2^{-1} = 0$ and so $U = \{ |z| < r, |w| < r \}$.
\end{itemize}
\vspace{2mm}

For Case 1, 
$b_{\gamma d} z^{\gamma} w^{d}$ is clearly the dominant term of $q$
since $\gamma \leq i$ and $d \leq j$ for any $i$ and $j$ such that $b_{ij} \neq 0$,
and the results are classical.

Difficulties appear when $s > 1$,
which is divided into the following three cases. 

\vspace{2mm}
\begin{itemize}
\item[\underline{Case 2}] 
  If $s > 1$ and $\delta \leq T_{s-1}$, then  we define
  \[
  (\gamma, d) = (n_s, m_s), \ 
  l_1 = \frac{n_s - n_{s-1}}{m_{s-1} - m_s} 
  \text{ and } l_2^{-1} = 0.
  \] 
  Hence $U = \{ |z| < r, |w| < r |z|^{l_1} \}$. 
  \vspace{4mm}
\item[\underline{Case 3}] 
  If $s > 1$ and $T_1 \leq \delta$, then  we define
  \[
  (\gamma, d) = (n_1, m_1), \
  l_1 = 0 
  \text{ and } l_2 = \frac{n_2 - n_1}{m_1 - m_2}.
  \] 
  Hence $U = \{ |z|^{l_2} < r^{l_2} |w|, |w| < r \} = \{ r^{-l_2} |z|^{l_2} < |w| < r \}$.  
  \vspace{4mm}
\item[\underline{Case 4}]
  If $s > 1$ and $T_k \leq \delta \leq T_{k-1}$ for some $2 \leq k \leq s-1$, then  we define
  \[
  (\gamma, d) = (n_k, m_k), \ 
  l_1 = \frac{n_k - n_{k-1}}{m_{k-1} - m_k} 
  \text{ and } l_1 + l_2= \frac{n_{k+1} - n_k}{m_k - m_{k+1}}.
  \] 
  Hence  $U = \{ |z|^{l_1 + l_2} < r^{l_2} |w|, |w| < r|z|^{l_1} \} = \{ r^{-l_2} |z|^{l_1 + l_2} < |w| < r|z|^{l_1} \}$. 
\end{itemize}
\vspace{2mm}

\begin{rem}[Slope of $L_k$]
For Case 4,
the rational numbers
$- l_1^{-1}$ and $- (l_1 + l_2)^{-1}$ are the slopes of the lines $L_{k-1}$ and $L_k$.
The same correspondence holds for all cases
if we define $L_0 = \{ x = n_1 \}$ and $L_s = \{ y = m_s \}$.
\end{rem}

\begin{rem}[Two dominant terms]
If $s > 1$ and $\delta = T_k$ for some $1 \leq k \leq s-1$,
then there are two different ``dominant'' terms of $q$.
Moreover,
if both satisfy the degree condition,
then there are two disjoint invariant open sets 
on which $f$ is conjugate to each of the two different monomial maps.
\end{rem}

\begin{rem}[Comparision with our previous results]
The results for Cases 2 and 3 were already proved in \cite{u3} and \cite{u2}, respectively.
In this paper we succeed in solving Case 4
and giving a unified statement for all cases in terms of the Newton polygon.
\end{rem}

\begin{rem}[Extension of $\phi$]
Using similar arguments in \cite{u1},
we prove that $\phi$ extends by analytic continuation
until it meets the other critical set of $f$ than the $z$-axis and $w$-axis in Section 9.
On the other hand,
if $m_j \geq 2$ for any $j$, then
$\phi$ does not extend 
from $U$ to a neighborhood of the origin
for Cases 2, 3 and 4,
because the critical set of $f$ consists not only of $\{ zw = 0 \}$ 
but also of other curves passing the origin.
\end{rem}

The same results hold even if $d = 1$
with one additional condition.

\begin{lem}\label{main lemma for d=1}
If $d = 1$ and $\delta \neq T_k$ for any $k$, 
then 
\begin{enumerate}
\item
for any small $\varepsilon>0$
there is $r>0$ such that
$||f - f_0|| < \varepsilon ||f_0||$ on $U_r$, 
and 
\item $f(U_r) \subset U_r$ for small $r>0$.
\end{enumerate}
\end{lem}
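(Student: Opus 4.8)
The plan is to mirror the proof of Lemma~\ref{main lemma} in the case $d\ge 2$, tracking where the hypothesis $d\ge 2$ was actually used and checking that the extra assumption $\delta\ne T_k$ for all $k$ rescues those estimates when $d=1$. Recall that on the open set $U_r$ the term $b_{\gamma d}z^\gamma w^d$ is the dominant term of $q$, meaning that for $(i,j)\ne(\gamma,d)$ with $b_{ij}\ne 0$ one has a strict inequality between $|z|^i|w|^j$ and $|z|^\gamma|w|^d$ whose margin is controlled by the distance of $\delta$ to the relevant $y$-intercepts $T_k$. I would first write down, for each of Cases 2, 3 and 4, the precise inequality that expresses dominance: on $U_r$ every monomial $z^iw^j$ appearing in $q$ other than $z^\gamma w^d$ satisfies $|z^iw^j| \le C r^{\theta}\,|z^\gamma w^d|$ for some $\theta=\theta(i,j)>0$ depending on the Newton data. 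The point is that $\theta$ stays bounded away from $0$ exactly because $\delta$ is strictly separated from the $y$-intercepts $T_k$; when $d\ge 2$ this separation is automatic from the case hypotheses ($\delta\le T_{s-1}$, $T_1\le\delta$, or $T_k\le\delta\le T_{k-1}$), but when $d=1$ one of these inequalities could degenerate into an equality $\delta=T_k$, which is precisely what the new hypothesis forbids.

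Granting the dominance inequalities, part (1) follows as before: write $q(z,w)=b_{\gamma d}z^\gamma w^d\bigl(1+\sum_{(i,j)\ne(\gamma,d)} (b_{ij}/b_{\gamma d})\,z^{i-\gamma}w^{j-d}\bigr)$, and likewise $p(z)=a_\delta z^\delta(1+O(z))$, so that on $U_r$
\[
\Bigl\|\tfrac{f-f_0}{f_0}\Bigr\|
\le \sum_{(i,j)\ne(\gamma,d),\,b_{ij}\ne0}\Bigl|\tfrac{b_{ij}}{b_{\gamma d}}\Bigr|\,|z|^{i-\gamma}|w|^{j-d} + O(|z|),
\]
and each term on the right is $\le C r^{\theta}$ with $\theta>0$, hence the whole sum is $<\varepsilon$ once $r$ is small. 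The only subtlety relative to $d\ge 2$ is bounding the finitely many exponents that could have $i-\gamma<0$ or $j-d<0$; for $d=1$ the term $j-d=-1$ can occur (e.g.\ the linear term $b_{10}z$), so I would check that the dominance inequality still gives a positive power of $r$ for those terms, which again is where $\delta\ne T_k$ enters.

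For part (2), the invariance $f(U_r)\subset U_r$, I would again copy the argument from Lemma~\ref{main lemma}: using (1), on $U_r$ we have $p(z)\approx a_\delta z^\delta$ and $q(z,w)\approx b_{\gamma d}z^\gamma w^d$, so it suffices to check that the monomial map $f_0$ sends (a slightly shrunk) $U_r$ into $U_r$ and that the $\varepsilon$-perturbation does not spoil the two defining inequalities $|z|^{l_1+l_2}<r^{l_2}|w|$ and $|w|<r|z|^{l_1}$. This reduces to two explicit inequalities among the exponents $\delta,\gamma,d,l_1,l_2$: plugging $(a_\delta z^\delta,b_{\gamma d}z^\gamma w^d)$ into the two conditions and comparing powers of $|z|$ and $|w|$, the required inequalities are equivalent to linear relations that hold precisely because $(\gamma,d)$ is the chosen vertex and $L_{k-1},L_k$ have the stated slopes; the strictness (needed to absorb the $\varepsilon$) comes from $\delta$ not lying on either line, i.e.\ $\delta\ne T_{k-1},T_k$. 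The main obstacle is therefore purely bookkeeping: organizing the four Newton-polygon cases uniformly and verifying that the single hypothesis $\delta\ne T_k$ for all $k$ supplies, in every case, the strict separation that the $d\ge 2$ proof got for free from the geometry of the polygon; once that separation is in hand the analytic estimates are identical to those already established.
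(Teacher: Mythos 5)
Your outline is the paper's own route, and its two main steps are right: part (1) is obtained by repeating the $d\ge 2$ estimate, and part (2) follows because $\delta\ne T_k$, combined with the case hypotheses, turns the relevant non-strict inequalities into strict ones, producing an extra positive power of $r$ that replaces the margin $r^{d-1}$ which disappears when $d=1$. Two of your attributions are off, however, and are worth correcting before you write the estimates out. First, the hypothesis $\delta\ne T_k$ plays no role in part (1): in the substituted variables $|w|=|z^{l_1}c|$, $|z|=|tc^{l_2^{-1}}|$ the exponents controlling $\eta=(q-z^{\gamma}w^{d})/z^{\gamma}w^{d}$ are $\tilde{i}-\tilde{\gamma}=(i+l_1j)-(\gamma+l_1d)$ and $\tilde{j}-\tilde{d}=l_2^{-1}(\tilde{i}-\tilde{\gamma})+(j-d)$, which do not involve $\delta$ at all; the terms with $j-d=-1$ are handled by these Newton-polygon inequalities and the shape of the wedge exactly as for $d\ge 2$, and indeed the paper proves (1) verbatim as in the $d\ge2$ case. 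Second, it is not true that for $d\ge 2$ the case hypotheses give strict separation of $\delta$ from the $T_k$: equality $\delta=T_k$ is allowed there (cf.\ the remark on two dominant terms), and the $d\ge2$ invariance proof gets its margin not from separation but from $Cr^{d}\le Cr^{2}<r$. The new hypothesis enters only in part (2), where it forces $\tilde{\gamma}=\gamma+l_1d-l_1\delta>0$ in Case 2, $\delta>\tilde{d}=l_2^{-1}\gamma+d$ in Case 3, and both in Case 4; e.g.\ in Case 4 this gives $|q/p^{l_1}|\le Cr^{\tilde{\gamma}+\tilde{d}}$ and $|p^{1+l_1l_2^{-1}}/q^{l_2^{-1}}|\le Cr^{(\delta-\tilde{d})+d}$ on $U_r$ with both exponents strictly larger than $1$, which beats the constants for small $r$. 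With these corrections your plan coincides with the paper's proof and goes through.
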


\begin{thm}\label{main theorem for d=1}
If $d = 1$ and $\delta \neq T_k$ for any $k$, then
there is a biholomorphic map $\phi$ defined on $U_r$
that conjugates $f$ to $f_0$
for small $r > 0$.
Moreover,
for any small $\varepsilon>0$, there is $r>0$ such that
$||\phi - id|| < \varepsilon ||id||$ on $U_r$.
\end{thm}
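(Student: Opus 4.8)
The plan is to follow exactly the scheme used for Theorem \ref{main theorem}, since the only role of the hypothesis $d \geq 2$ there was to guarantee the two conclusions of Lemma \ref{main lemma}, which are now furnished in the case $d = 1$ (with the extra assumption $\delta \neq T_k$ for all $k$) by Lemma \ref{main lemma for d=1}. So I would first fix $\varepsilon > 0$ small and choose $r > 0$ as in Lemma \ref{main lemma for d=1}, so that $\|f - f_0\| < \varepsilon \|f_0\|$ on $U_r$ and $f(U_r) \subset U_r$. Because $U_r$ is $f$-invariant, the iterates $f^n$ are all defined on $U_r$, and because $f_0$ is a monomial map one can choose, on the appropriate branch over $U_r$, a well-defined holomorphic inverse $f_0^{-n}$ normalized so that $f_0^{-n} \circ f_0^n = \mathrm{id}$. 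The candidate B\"ottcher coordinate is then $\phi = \lim_{n \to \infty} f_0^{-n} \circ f^n$ on $U_r$.

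The heart of the argument is the convergence and nondegeneracy of this limit. I would estimate, on $U_r$, the ratio of consecutive terms $f_0^{-(n+1)} \circ f^{n+1}$ and $f_0^{-n} \circ f^n$ in each coordinate. Writing the first coordinate as a product, the correction factor coming from one more step is essentially $\bigl(p(z_n)/p_0(z_n)\bigr)^{1/\delta^{n+1}}$ where $(z_n,w_n) = f^n(z,w)$, and in the second coordinate the correction factor involves $\bigl(q(z_n,w_n)/\bigl(b_{\gamma d} z_n^{\gamma} w_n^{d}\bigr)\bigr)^{\text{(exponent decaying geometrically)}}$; here the crucial point is that the monomial exponent matrix $\begin{pmatrix}\delta & 0\\ \gamma & d\end{pmatrix}$ has eigenvalues $\delta \geq 2$ and $d$, and the relevant exponents appearing when one unwinds $f_0^{-n}$ decay geometrically — for $d \geq 2$ this is immediate, while for $d = 1$ the decay in the $w$-direction comes instead from the $\gamma$-dependence and the fact that $z_n \to 0$ at rate governed by $\delta^{-n}$, together with the estimate $\|f - f_0\| < \varepsilon\|f_0\|$ on the invariant set $U_r$. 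Using part (1) of Lemma \ref{main lemma for d=1} to bound each factor by $1 + O(\varepsilon)$ raised to a geometrically small power, the telescoping product converges uniformly on $U_r$, so $\phi$ is holomorphic there; shrinking $r$ (equivalently taking $\varepsilon$ small) gives $\|\phi - \mathrm{id}\| < \varepsilon \|\mathrm{id}\|$ on $U_r$, which in particular forces $\phi$ to be nonconstant and, by the same estimate applied to its derivative (or by the quantitative closeness to the identity), locally biholomorphic; injectivity on $U_r$ follows from the $C^0$ estimate $\|\phi - \mathrm{id}\| < \varepsilon\|\mathrm{id}\|$ by a standard argument. The conjugacy relation $\phi \circ f = f_0 \circ \phi$ is then obtained by passing to the limit in $f_0^{-n} \circ f^n \circ f = f_0 \circ \bigl(f_0^{-(n+1)} \circ f^{n+1}\bigr)$.

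The main obstacle, and the only place where the hypothesis $\delta \neq T_k$ is really used, is ensuring that the per-step correction in the $w$-coordinate genuinely decays geometrically when $d = 1$. When $d = 1$ the matrix $\begin{pmatrix}\delta & 0\\ \gamma & 1\end{pmatrix}$ is not expanding in the second direction on its own, so the contraction must be extracted from the $z$-direction: one rewrites $f_0^{-n}$ acting on the second coordinate as dividing by $b_{\gamma d}^{(1 + \cdots)} z^{\gamma(\delta^{n-1} + \cdots)/\delta^{?}}$-type factors and checks that the accumulated exponent on the $q/q_0$-error term is summable. The condition $\delta \neq T_k$ is exactly what makes the dominant monomial $b_{\gamma d} z^{\gamma} w^{d}$ strictly dominant (not merely comparable to a second monomial) on $U_r$, which is what Lemma \ref{main lemma for d=1} encodes and what prevents the error ratios from being merely bounded rather than tending to $1$ at the required rate. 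Once that estimate is in hand — and it is, via Lemma \ref{main lemma for d=1}(1) — the remainder of the proof is formally identical to the case $d \geq 2$, so I would present Theorem \ref{main theorem for d=1} by indicating the needed modifications to the proof of Theorem \ref{main theorem} rather than repeating it in full.
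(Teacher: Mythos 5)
Your overall skeleton (set $\phi_n = f_0^{-n}\circ f^n$ on the invariant set $U_r$, prove uniform convergence, then get injectivity exactly as in the case $d \geq 2$) is the same as the paper's, and you correctly locate the difficulty: when $d=1$ the second coordinate of $f_0^{-n}$ contributes no root, so the error factor $1+\eta(f^{j-1})$ enters the telescoping product with exponent $1/d^j = 1$, and the $\zeta$-factor enters with exponent $\gamma_j/(\delta^j d^j) \to \gamma/(\delta-1)$, a nonzero constant. But your claim that the needed estimate "is in hand, via Lemma \ref{main lemma for d=1}(1)" is exactly where the argument breaks. The uniform bound $\|f-f_0\| < \varepsilon\|f_0\|$ on $U_r$ only yields $\prod_{j\le n}(1+|\eta(f^{j-1})|) \le (1+\varepsilon)^n$, which diverges, and your phrase ``each factor bounded by $1+O(\varepsilon)$ raised to a geometrically small power'' is false for $d=1$, since the exponent on the $\eta$-factor is identically $1$. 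What the convergence actually requires, and what Lemma \ref{main lemma for d=1} does not provide, is summable (in the paper, geometric) decay of $|\eta(f^n(z,w))|$ along orbits.

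The paper supplies this with two additional quantitative lemmas that your proposal does not reconstruct: first, $f^n(U_r) \subset U_{r/2^n}$ for small $r$ (Lemma \ref{lem1: d = 1}), proved from the strict inequalities $\tilde{\gamma} > 0$, $\tilde{d} > 1$, $\delta - l_2^{-1}\tilde{\gamma} > 1$, which is precisely what the hypothesis $\delta \neq T_k$ buys; second, the bounds $|\zeta(p^n)| < C_1\, r/2^n$ and $|\eta(f^n)| < C_2\,(r/2^n)^M$ (Lemma \ref{lem2: d = 1}), where for Case 4 one must introduce the new exponent $M = \min\{\min\{\tilde{n}_j - \tilde{\gamma} : \tilde{n}_j > \tilde{\gamma}\},\,1\} \in (0,1]$ because the exponents $\tilde{i}-\tilde{\gamma}$ occurring in $\eta$ may be fractional and smaller than $1$. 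Only with these does one get $\sum_n |\eta(f^n)| < \infty$ and the lifted telescoping estimate $|\Phi_{n+1}^2 - \Phi_n^2| \le \bigl(C_2 + \tfrac{\gamma}{\delta-1}C_1\bigr)(r/2^n)^M$. Your appeal to ``$z_n \to 0$ at rate governed by $\delta^{-n}$'' gestures at the right mechanism but is not a proof: one must show the whole orbit stays in the shrinking wedges $U_{r/2^n}$ (not merely that $z_n \to 0$), and one must dominate $\eta$ by a positive power of the shrinking radius, which is the role of $M$. Without these two steps the uniform convergence of $\phi_n$ is not established, so the proposal has a genuine gap at its central point.
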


We can not remove the additional condition $\delta \neq T_k$
as stated in Example \ref{example}.
This condition is always satisfied for Case 1
since we have no $T_k$.
As stated in \cite{u2}, 
if $f$ is in Case 3 and $d=1$, 
then it is rigid of class 4 in \cite{f}
and conjugate to $f_0$ on a neighborhood of the origin,
not only on the wedge.

The organization of this paper is as follows.
The main purpose of this paper is to prove Lemma \ref{main lemma},
and we prove it for Cases 2, 3 and 4 in Sections 2, 3 and 4, respectively.
Although Cases 2 and 3 were already proved in \cite{u2, u3},
we provide unified explanations in terms of Newton polygons and blow-ups, and
Case 4 is proved by combining arguments in Cases 2 and 3.
We omit the proofs of the main lemmas and theorems for Case 1;
the proofs are similar to and simpler than the other cases,
or one may refer to \cite{f}. 

In Section 5 
we introduce intervals of real numbers for each of which Lemma \ref{main lemma} holds.
Moreover,
we associate rational numbers in the intervals to branched coverings of $f$,
which are a generalization of the blow-ups,
and consider when the covering is well-defined. 
This section is a kind of an appendix,
and one may skip for the proofs of the main results.

Theorem \ref{main theorem} is proved in Section 6
by the same arguments as in \cite{u2}:
it follows from Lemma \ref{main lemma} that
the composition $\phi_n = f_0^{-n} \circ f^n$ is well-defined on $U_r$,
converges uniformly to $\phi$ on $U_r$,
and the limit $\phi$ is biholomorphic on $U_r$.
We use Rouch\'e's theorem 
to obtain the injectivity of $\phi$,
and one might need to shrink $r$ a little from that of the lemma.

The case $d = 1$ is dealt with in Section 7.
The proof of the uniform convergence of $\phi_n$ is different from the case $d \geq 2$,
and we prove that the same idea as in \cite{u2} works also for Case 4.
Example \ref{example} shows that we can not remove the additional condition.

The uniqueness and extension of $\phi$ is considered in Sections 8 and 9. 
In Section 8, using almost the same arguments as in \cite{u1},
we prove that a uniqueness statement similar to the one dimensional case holds
for Cases 1 and 2 with two suitable conditions if $d \geq 2$.
We deal with the extension problem of $\phi$ In Section 9.
Although the situation is different from that in \cite{u1},
almost the same arguments as in \cite{u1} work 
outside the $z$-axis and $w$-axis,
and we prove that 
$\phi$ extends by analytic continuation
until it meets the other critical set of $f$ than the $z$-axis and $w$-axis.

\section{Main lemma, Blow-ups and Newton polygons for Case 2}

We prove Lemma \ref{main lemma} for Case 2 in this section. 
Let
\[
\delta \leq T_{s-1}, \
(\gamma, d) = (n_s, m_s), \
l_1 = \frac{n_s - n_{s-1}}{m_{s-1} - m_s}
\text{ and } l_2^{-1} = 0.
\]
Recall that 
$p(z) = a_{\delta} z^{\delta} + O(z^{\delta + 1})$ and 
$q(z,w) = b_{\gamma d} z^{\gamma} w^{d} + \sum b_{ij} z^{i} w^{j}$.
By taking an affine conjugate,
we may assume that $a_{\delta} = 1$ and $b_{\gamma d} = 1$ if $d \geq 2$.
Moreover,
we may assume that $p(z) = z^{\delta}$.
In fact,
using the B\"{o}ttcher coordinate for the original $p$,
we can conjugate the original germ $f$ to a holomorphic skew product
whose first component is just $z^{\delta}$,
and the Newton polygons of the second components of the both germs are the same.
Therefore,
we may write
\[
f(z, w) = \left( z^{\delta}, z^{\gamma} w^{d} + \sum b_{ij} z^{i} w^{j} \right).
\]
Even if we do not impose these assumptions,
similar arguments in this paper induce the same results.

We first prove Lemma \ref{main lemma} in Section 2.1,
and then explain our results in terms of blow-ups when $l_1$ is integer 
and of Newton polygons in Sections 2.2 and 2.3.
Let us denote $f \sim f_0$ on $U_r$ as $r \to 0$ for short,
if $f$ satisfies the former statement in Lemma \ref{main lemma}:
for any small $\varepsilon$ 
there is $r$ such that
$||f - f_0|| < \varepsilon ||f_0||$ on $U_r$.

\subsection{Proof of the main lemma}

The following lemma is clear since $d = m_s$.

\begin{lem} \label{lem 1 for main lem: case2} 
It follows that $d \leq j$ for any $j$ such that $b_{ij} \neq 0$.
\end{lem}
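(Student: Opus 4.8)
The plan is to unwind the two definitions involved: that of the Newton polygon $N(q)$ and that of the vertex labelling. First I would observe that, by definition, $N(q)$ is the convex hull of $\bigcup_{b_{ij}\neq 0} D(i,j)$, where $D(i,j) = \{(x,y) \mid x \ge i,\ y \ge j\}$. Hence whenever $b_{ij}\neq 0$ the point $(i,j)$ itself lies in $D(i,j) \subseteq N(q)$. So it suffices to check that every point of $N(q)$ has second coordinate at least $d$.

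Next I would exploit the ordering of the vertices. The vertices $(n_1,m_1),\dots,(n_s,m_s)$ of $N(q)$ satisfy $m_1 > m_2 > \cdots > m_s$, so $m_s$ is the smallest among the second coordinates of all vertices, and in Case 2 we have $(\gamma,d)=(n_s,m_s)$, i.e. $d = m_s$. Each generating quadrant $D(i,j)$ extends upward to infinity and never dips below the horizontal line through its corner, and the corners all lie on or above $\{y = m_s\}$ by the choice of $m_s$; concretely, the boundary of $N(q)$ consists of a vertical ray upward from $(n_1,m_1)$, the segments $L_1,\dots,L_{s-1}$ joining consecutive vertices (whose $y$-coordinates stay between $m_{k+1}$ and $m_k$), and a horizontal ray to the right from $(n_s,m_s)$. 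Consequently $N(q) \subseteq \{(x,y) \mid y \ge m_s\}$. Combining this with the previous paragraph gives $j \ge m_s = d$ for every $(i,j)$ with $b_{ij}\neq 0$, which is exactly the assertion.

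There is essentially no obstacle here; the lemma is immediate once the definitions are laid out. The only point requiring a word of care is the verification that the lowest vertex $(n_s,m_s)$ is genuinely the bottom of the whole polygon, but this follows at once from $m_1 > \cdots > m_s$ together with the fact that each $D(i,j)$ is unbounded above and bounded below by its corner.
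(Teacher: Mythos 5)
Your argument is correct and takes essentially the same route as the paper, which disposes of the lemma with the single remark that it is clear since $d = m_s$; you simply unpack the convex-hull definition to see that every exponent $(i,j)$ with $b_{ij}\neq 0$ lies in $N(q)\subseteq\{y\ge m_s\}$. One wording caveat: the clause ``the corners all lie on or above $\{y=m_s\}$ by the choice of $m_s$'' states the conclusion itself, but your subsequent appeal to the boundary structure of $N(q)$ (the horizontal ray emanating from the lowest vertex $(n_s,m_s)$) supplies the actual justification, so the proof stands.
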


More precisely,
$(\gamma, d)$ is minimum in the sense that
$d \leq j$, and $\gamma \leq i$ if $d = j$.

\begin{lem} \label{lem 2 for main lem: case2} 
It follows that
$l_1 \delta \leq \gamma + l_1 d \leq i + l_1 j$
for any $(i,j)$ such that $b_{ij} \neq 0$.
\end{lem}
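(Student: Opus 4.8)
The plan is to read off the two inequalities directly from the geometry of the Newton polygon $N(q)$ together with the defining condition $\delta \le T_{s-1}$ of Case 2. Recall that in Case 2 we have set $(\gamma,d)=(n_s,m_s)$, the ``last'' vertex of $N(q)$, and $l_1=(n_s-n_{s-1})/(m_{s-1}-m_s)$, so that $-l_1^{-1}$ is precisely the slope of the edge $L_{s-1}$ joining $(n_{s-1},m_{s-1})$ to $(n_s,m_s)$ (using the convention $L_0=\{x=n_1\}$, $L_s=\{y=m_s\}$ from the Slope Remark). The key observation is that the linear functional $(i,j)\mapsto i+l_1 j$ is, up to the positive factor $l_1$, the functional whose level lines are parallel to $L_{s-1}$; hence minimizing $i+l_1 j$ over $N(q)$ is a one-line convexity computation.

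First I would establish the right-hand inequality $\gamma + l_1 d \le i + l_1 j$ for every $(i,j)$ with $b_{ij}\neq 0$. Since such a point lies in $N(q)$, and $N(q)$ is the convex hull of the sets $D(i,j)=\{x\ge i,\ y\ge j\}$, it suffices to check the inequality at the vertices $(n_k,m_k)$ and to note that the functional $i+l_1 j$ is nondecreasing in each coordinate (because $l_1\ge 0$), so it attains its minimum over each $D(i,j)$ at $(i,j)$ itself and over $N(q)$ at a vertex. For the vertices: the value $n_k+l_1 m_k$ as a function of $k$ is, by the convexity of the polygon (the slopes $-l_1^{-1}=$ slope of $L_{s-1}$ are the least negative, i.e. the edges steepen as $k$ decreases), minimized at $k=s$; concretely, $n_s+l_1 m_s \le n_{k}+l_1 m_{k}$ for all $k$ is equivalent to $(n_s-n_k) \le l_1(m_k-m_s)$, which for $k=s-1$ is an equality by the definition of $l_1$, and for $k<s-1$ follows because the remaining edges $L_1,\dots,L_{s-2}$ have slope $\le -l_1^{-1}$. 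This gives $\gamma+l_1 d \le i+l_1 j$ at every vertex, hence everywhere on $N(q)$.

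Next I would prove the left-hand inequality $l_1\delta \le \gamma + l_1 d$, i.e. $l_1\delta \le n_s + l_1 m_s$. Rewriting, this says $l_1(\delta - m_s) \le n_s$. I would derive this from the Case 2 hypothesis $\delta \le T_{s-1}$, where $T_{s-1}$ is the $y$-intercept of the line $L_{s-1}$ through $(n_{s-1},m_{s-1})$ and $(n_s,m_s)$: that line has equation $y = m_s - l_1^{-1}(x-n_s)$, so $T_{s-1} = m_s + l_1^{-1} n_s$. Thus $\delta \le T_{s-1}$ is exactly $\delta \le m_s + l_1^{-1} n_s$, and multiplying by $l_1 \ge 0$ gives $l_1\delta \le l_1 m_s + n_s = \gamma + l_1 d$, as desired. (If $l_1 = 0$ the inequality is trivial, and the case $m_{s-1}=m_s$ does not occur since the $m_k$ are strictly decreasing, so $l_1$ is finite and well-defined.)

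I do not expect a genuine obstacle here; the only thing to be careful about is the bookkeeping of which vertex minimizes $n_k+l_1 m_k$ and the sign conventions in relating $l_1$, the slope of $L_{s-1}$, and the intercept $T_{s-1}$. The mild subtlety — the ``hard part'', such as it is — is making sure the chain $l_1\delta \le \gamma+l_1 d \le i+l_1 j$ uses $\delta \le T_{s-1}$ only for the first link and pure convexity of $N(q)$ for the second, so that the argument transparently separates the hypothesis of Case 2 from the combinatorics of the Newton polygon; this separation is what will later let Case 4 reuse the second link verbatim.
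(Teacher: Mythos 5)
Your proposal is correct and is essentially the paper's argument: the paper proves the lemma in one line by observing that $l_1\delta$, $\gamma+l_1 d$ and $i+l_1 j$ are the $x$-intercepts of the lines of slope $-l_1^{-1}$ through $(0,\delta)$, $(\gamma,d)$ and $(i,j)$, which is exactly your comparison of the linear functional $i+l_1 j$ using that $L_{s-1}$ supports $N(q)$ at $(n_s,m_s)$ and that $\delta\le T_{s-1}$ places $(0,\delta)$ on or below $L_{s-1}$. You merely make explicit the convexity bookkeeping (minimization over vertices, equality on $L_{s-1}$) that the paper leaves implicit; no gap.
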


\begin{proof}
These numbers $l_1 \delta$, $\gamma + l_1 d$ and $i + l_1 j$ are 
the $x$-intercepts of the lines with slope $-l_1^{-1}$
passing the points $(0, \delta)$, $(\gamma, d)$ and $(i,j)$.
\end{proof}

These inequalities in Lemmas \ref{lem 1 for main lem: case2} 
and \ref{lem 2 for main lem: case2}  induce the main lemma.

\begin{proof}[Proof of Lemma \ref{main lemma} for Case 2]  
We first define 
\[
\eta (z,w) = \frac{q(z,w) - z^{\gamma} w^d}{ z^{\gamma} w^d}
\]
and show the former statement.  
Let $l=l_1$ and $|w| = |z^l c|$.
Then 
\[
U_r = \{ |z| < r, |w| < r |z|^{l} \} = \{ 0 < |z| < r, |c| < r \} \subset \{ |z| < r, |c| < r \} \text{ and}
\] 
\[
|\eta|
= \left| \sum \frac{b_{ij} z^{i} w^{j}}{z^{\gamma} w^{d}} \right| 
= \left| \sum \frac{b_{ij} z^{i} (z^l c)^{j}}{z^{\gamma} (z^l c)^{d}} \right| 
= \left| \sum \frac{b_{ij} z^{i +lj} c^{j}}{z^{\gamma + ld} c^{d}} \right|
\]
\[
\leq \sum |b_{ij}| |z|^{(i + lj) - (\gamma + ld)} |c|^{j - d}.
\]
The conditions $i + lj \geq \gamma + ld$ and $j \geq d$ ensure that
the left-hand side is a power series in $|z|$ and $|c|$, 
and so converges on $\{ |z| < r, |c| < r \}$.
Moreover, 
for each $(i,j) \neq (\gamma, d)$,
at least one of the inequalities $(i + lj) - (\gamma + ld) > 0$ and $j - d > 0$ holds
since $j \geq d$, and $i > \gamma$ if $j = d$.
More precisely,
$(i + lj) - (\gamma + ld) \geq 1$ and/or $j - d \geq 1$.
Therefore, 
for any small $\varepsilon$
there is $r$ such that
$|\eta| < \varepsilon$ on $U_r$. 

We next show the invariance of $U_r$.
Since the inequality $|p(z)| < r$ is trivial,
it is enough to show that
$|q(z,w)| < r |p(z)|^{l}$ for any $(z,w)$ in $U_r$.
Because $\gamma + ld \geq l \delta$,
\[
\left| \frac{q(z,w)}{p(z)^{l}} \right| 
\sim \left| \frac{z^{\gamma} w^{d}}{(z^{\delta})^{l}} \right| 
= \left| \frac{z^{\gamma} (z^l c)^{d}}{(z^{\delta})^{l}} \right| 
= |z|^{\gamma + ld - l \delta}  |c|^{d}
\leq |c|^d < r^d
\]
on $U_r$ as $r \to 0$.
The condition $d \geq 2$ ensures that
$|q(z,w)/p(z)^{l}| \leq C r^d \leq C r^2 < r$ 
for some constant $C$ and sufficiently small $r$.
\end{proof}

\subsection{Blow-ups}

Assuming that $l_1$ is integer,
we explain our results in terms of blow-ups.
Let $\pi_1 (z,c) = (z, z^{l} c)$
and $\tilde{f} = \pi_1^{-1} \circ f \circ \pi_1$,
where $l = l_1$.
Note that $\pi_1$ is the $l$-th compositions of the blow-up $(z,c) \to (z,zc)$. 
Then we have
\[
\tilde{f} (z,c) 
= (\tilde{p}(z), \tilde{q}(z,c))
= \left( p(z), \ \dfrac{q(z,z^l c)}{p(z)^l} \right) 
\]
\[
= \left( z^{\delta}, \ 
z^{\gamma+ld - l \delta} c^d + \sum b_{ij} z^{i + lj - l \delta} c^{j} \right)
\]
\[
= \left( z^{\delta}, \ 
z^{\gamma+ld - l \delta} c^d
\left\{ 1 + \sum b_{ij} z^{(i + lj) - (\gamma + ld)} c^{j - d} \right\}  \right)
\]
\[
= \left( z^{\delta}, \ 
z^{\gamma+ld - l \delta} c^d
\left\{ 1 + \eta (z,c) \right\}  \right)
\sim ( z^{\delta}, \ z^{\gamma+ld - l \delta} c^d).
\]
Note that $\pi_1^{-1} (U_r) = \{ 0< |z| < r, |c| < r \} \subset \{ |z| < r, |c| < r \}$.

\begin{prop} \label{} 
If $l = l_1 \in \mathbb{N}$, 
then $\tilde{f}$ is well-defined, holomorphic, skew product and rigid 
on a neighborhood $\{ |z| < r, |c| < r \}$ of the origin. 
More precisely,
\[
\tilde{f} (z,c) = \left( z^{\delta}, \ 
z^{\gamma+ld - l \delta} c^d \left\{ 1 + \eta (z,c) \right\}  \right),
\]
where $\eta \to 0$ as $z$, $c \to 0$, and 
it has a superattracting fixed point at the origin.
\end{prop}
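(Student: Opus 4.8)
The proposition essentially repackages the computation that precedes it, so the plan is to verify each listed property of $\tilde f = \pi_1^{-1}\circ f\circ\pi_1$ in turn, with the only genuine content being that the apparent denominators $z^{l\delta}$ in $\tilde q$ actually cancel. First I would compute $\pi_1^{-1}\circ f\circ\pi_1$ directly: since $\pi_1(z,c)=(z,z^l c)$ has the explicit inverse $\pi_1^{-1}(z,w)=(z,w/z^l)$ away from $\{z=0\}$, one gets $\tilde f(z,c)=(p(z),\,q(z,z^l c)/p(z)^l)$ as in the displayed chain of equalities, and substituting $p(z)=z^\delta$, $q(z,w)=z^\gamma w^d+\sum_{(i,j)\neq(\gamma,d)}b_{ij}z^iw^j$ gives the second component $z^{\gamma+ld-l\delta}c^d\{1+\eta(z,c)\}$ with $\eta(z,c)=\sum b_{ij}z^{(i+lj)-(\gamma+ld)}c^{j-d}$.

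The crucial step is to check that this expression is genuinely holomorphic on a full neighborhood $\{|z|<r,\,|c|<r\}$ of the origin, i.e.\ that no negative powers of $z$ or $c$ survive. For the leading term this is the inequality $\gamma+ld\geq l\delta$ from Lemma~\ref{lem 2 for main lem: case2} (with $l=l_1$), so the exponent $\gamma+ld-l\delta$ is a nonnegative integer; note this is where integrality of $l_1$ is used, ensuring all exponents in $z$ are integers. For the series $\eta$, Lemmas~\ref{lem 1 for main lem: case2} and~\ref{lem 2 for main lem: case2} give $i+lj\geq\gamma+ld$ and $j\geq d$ for every $(i,j)$ with $b_{ij}\neq0$, so each monomial of $\eta$ has nonnegative integer exponents in both $z$ and $c$; since the exponents are bounded below and the original $q$ is convergent, $\eta$ defines a holomorphic function on a polydisc about the origin, and the factorization $1+\eta$ is legitimate there. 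That $\eta\to0$ as $(z,c)\to0$ is exactly the ``former statement'' argument already carried out in the proof of Lemma~\ref{main lemma} for Case~2: for each $(i,j)\neq(\gamma,d)$ at least one of the exponents $(i+lj)-(\gamma+ld)$ and $j-d$ is $\geq1$, so every term of the convergent power series $\eta$ vanishes at the origin, hence $\eta(0,0)=0$ and $\eta$ is continuous, giving $\eta\to0$.

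The remaining assertions are then immediate. The map $\tilde f$ is skew product because its first component $z^\delta$ depends only on $z$; it is holomorphic on $\{|z|<r,\,|c|<r\}$ by the previous paragraph (after shrinking $r$ so that the polydisc lies inside the domain of convergence of $\eta$); and it fixes the origin with $\tilde f(0,0)=(0,0)$ since $\delta\geq2$ and $\gamma+ld-l\delta\geq0$ together with $d\geq2$ (or $d\geq1$ in the $d=1$ setting) force the second component to vanish to positive order at $0$ — more precisely $D\tilde f(0)=0$, so the fixed point is superattracting. Finally, rigidity follows because the critical set of $\tilde f$ is contained in $\{zc=0\}$: the first component contributes the divisor $\{z=0\}$, and the second component $z^{\gamma+ld-l\delta}c^d(1+\eta)$ has logarithmic derivative with poles only along $\{z=0\}\cup\{c=0\}$ once $r$ is small enough that $1+\eta$ is nonvanishing (possible since $\eta\to0$), so the union of critical sets of all iterates is the normal-crossings divisor $\{zc=0\}$, which is forward invariant under the monomial-type map $\tilde f$. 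I do not expect any serious obstacle; the one point requiring care is bookkeeping of the exponents to see that integrality of $l_1$ is precisely what makes the blow-up $\pi_1$ a genuine (iterated) point blow-up and $\tilde f$ a genuine holomorphic germ rather than a multivalued branched object.
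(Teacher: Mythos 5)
Your proposal is correct and follows essentially the same route as the paper: the proposition is exactly the blow-up computation $\tilde f=(p(z),\,q(z,z^{l}c)/p(z)^{l})$ together with the exponent inequalities of Lemmas \ref{lem 1 for main lem: case2} and \ref{lem 2 for main lem: case2}, which (using $l=l_1\in\mathbb{N}$) make all exponents nonnegative integers, so $\tilde f$ is a holomorphic skew product with $\eta\to0$ and a superattracting fixed point at $0$. Your added remarks on rigidity and on $D\tilde f(0)=0$ are consistent with the paper's context (where $\tilde\gamma>0$ or $d\geq2$), so no gap.
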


Because $\tilde{f}$ is a holomorphic skew product in Case 1,
it is easy to construct the B\"{o}ttcher coordinate for $\tilde{f}$,
which induces the B\"{o}ttcher coordinate for $f$ on $U_r$.

\begin{rem} \label{}  
If $l_1 \in \mathbb{N}$, 
then $\tilde{f}$ is well-defined 
not only on a neighborhood of the origin 
but also on the preimage of the domain of $f$ by $\pi_1$,
which includes the $c$-axis.
Moreover,
even if $l_1$ is rational,
we can lift $f$ to a holomorphic skew product similar to $\tilde{f}$
as stated in Proposition \ref{branched coverings: case2} in Section 5.1.
\end{rem}

\subsection{Newton polygons}

It is helpful to consider the Newton polygon of $\tilde{q}$.
Let 
\[
\tilde{\gamma} = \gamma + l_1 d - l_1 \delta
\text{ and }
\tilde{i} = i + l_1 j - l_1 \delta.
\]
Then 
$\tilde{q}(z,c) = z^{\tilde{\gamma}} c^d + \sum b_{ij} z^{\tilde{i}} c^{j}$
and Lemma \ref{lem 2 for main lem: case2} is translated into the following.

\begin{lem} \label{} 
It follows that
$0 \leq \tilde{\gamma} \leq \tilde{i}$ for any $(i,j)$ such that $b_{ij} \neq 0$.
\end{lem}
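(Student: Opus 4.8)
The plan is to read off the translated inequality directly from Lemma \ref{lem 2 for main lem: case2}, since the new quantities $\tilde{\gamma}$ and $\tilde{i}$ are nothing more than the old quantities $\gamma + l_1 d$ and $i + l_1 j$ shifted by the common amount $l_1 \delta$. First I would record the two chains of inequalities already available: on the one hand, Lemma \ref{lem 2 for main lem: case2} gives $l_1 \delta \leq \gamma + l_1 d \leq i + l_1 j$ for every $(i,j)$ with $b_{ij} \neq 0$; on the other hand, the definitions $\tilde{\gamma} = \gamma + l_1 d - l_1 \delta$ and $\tilde{i} = i + l_1 j - l_1 \delta$ are exactly these quantities minus $l_1 \delta$. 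Subtracting $l_1 \delta$ from each term of the chain $l_1 \delta \leq \gamma + l_1 d \leq i + l_1 j$ preserves the inequalities and yields $0 \leq \tilde{\gamma} \leq \tilde{i}$, which is the claim.

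The only point that deserves a word is why the first inequality $l_1 \delta \leq \gamma + l_1 d$ holds, i.e. why $\tilde{\gamma} \geq 0$: this is precisely the inequality $\gamma + l_1 d \geq l_1 \delta$ used in the proof of Lemma \ref{main lemma} for Case 2 (it is what makes $q(z,w)/p(z)^{l}$ bounded on $U_r$), and geometrically it says that the $x$-intercept of the line of slope $-l_1^{-1}$ through $(\gamma, d)$ is at least that of the parallel line through $(0,\delta)$; since $(\gamma,d) = (n_s,m_s)$ is the lowest vertex of $N(q)$ and the hypothesis $\delta \leq T_{s-1}$ controls the relevant slope, this is built into the Case 2 setup. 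Thus no new estimate is needed.

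I do not expect any genuine obstacle here: the statement is a bookkeeping translation of an inequality already proved, so the proof is a single line of arithmetic together with a pointer to where $\tilde{\gamma} \geq 0$ comes from. The proof I would write is simply:

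\begin{proof}
Subtract $l_1 \delta$ from each term of the chain of inequalities
$l_1 \delta \leq \gamma + l_1 d \leq i + l_1 j$
in Lemma \ref{lem 2 for main lem: case2}.
\end{proof}
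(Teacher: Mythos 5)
Your proposal is correct and is exactly the paper's argument: the paper presents this lemma as the translation of Lemma \ref{lem 2 for main lem: case2} under the substitution $\tilde{\gamma} = \gamma + l_1 d - l_1\delta$, $\tilde{i} = i + l_1 j - l_1\delta$, i.e.\ subtracting $l_1\delta$ from the chain $l_1\delta \leq \gamma + l_1 d \leq i + l_1 j$. Your added remark on the origin of the inequality $l_1\delta \leq \gamma + l_1 d$ is consistent with the paper's geometric proof of that lemma ($x$-intercepts of lines of slope $-l_1^{-1}$), so nothing is missing.
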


Therefore,
the Newton polygon of $\tilde{q}$ has just one vertex $(\tilde{\gamma}, d)$:
$N(\tilde{q}) = D(\tilde{\gamma}, d)$.

\begin{rem}
The affine transformation
\[
A_1
\begin{pmatrix} i \\ j \end{pmatrix} =
\begin{pmatrix} i + l_1 j - l_1 \delta \\ j \end{pmatrix} =
\begin{pmatrix} 1 & l_1 \\ 0 & 1 \end{pmatrix}
\begin{pmatrix} i \\ j \end{pmatrix} - \begin{pmatrix} l_1 \delta \\ 0 \end{pmatrix}
\]
maps the basis $\{ (1,0), (-l_1,1) \}$ to $\{ (1,0), (0,1) \}$.
In other words, 
$A_1$ maps a horizontal line and the line $L_{s-1}$ with slope $-l_1^{-1}$
to the same horizontal line and a vertical line.
\end{rem}

\section{Main lemma, Blow-ups and Newton polygons for Case 3}

We prove Lemma \ref{main lemma} for Case 3 in this section. 
Let
\[
T_1 \leq \delta, \
(\gamma, d) = (n_1, m_1), \
l_1 = 0 \text{ and }
l_2 = \frac{n_2 - n_1}{m_1 - m_2}. 
\]
Let us write $f(z, w) = \left( z^{\delta}, z^{\gamma} w^{d} + \sum b_{ij} z^{i} w^{j} \right)$
for simplicity.
Similar to the previous section,
we prove Lemma \ref{main lemma} in Section 3.1,
explain our results in terms of blow-ups when $l_2^{-1}$ is integer and of Newton polygons 
in Sections 3.2 and 3.3.

\subsection{Proof of the main lemma}

The following lemma is clear since $\gamma = n_1$.

\begin{lem} \label{lem1 for main lem: case3} 
It follows that
$\gamma \leq i$ for any $i$ such that $b_{ij} \neq 0$.
\end{lem}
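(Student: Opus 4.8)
\textbf{Plan for the proof of Lemma \ref{lem1 for main lem: case3}.}
The statement is that $\gamma \leq i$ whenever $b_{ij} \neq 0$, under the Case 3 hypothesis $(\gamma, d) = (n_1, m_1)$. The plan is to read this directly off the definition of the Newton polygon. By definition, $N(q)$ is the convex hull of $\bigcup_{b_{ij}\neq 0} D(i,j)$, where $D(i,j) = \{(x,y) \mid x \geq i,\ y \geq j\}$. The vertices are listed as $(n_1,m_1), (n_2,m_2), \dots, (n_s,m_s)$ with $n_1 < n_2 < \cdots < n_s$, so $n_1$ is the \emph{smallest} first coordinate occurring among the vertices.

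First I would observe that every point $(i,j)$ with $b_{ij}\neq 0$ lies in $N(q)$, since $(i,j) \in D(i,j) \subset N(q)$. Next I would argue that every point of $N(q)$ has first coordinate at least $n_1 = \gamma$: indeed, each generating set $D(i,j)$ is contained in the half-plane $\{x \geq n_1\}$ because the leftmost vertex of the convex hull has $x$-coordinate $n_1$, and more concretely, if some $b_{ij}\neq 0$ had $i < n_1$, then $D(i,j)$ would contain points with arbitrarily large $y$ and $x = i < n_1$, forcing the convex hull to have a vertex with first coordinate $\leq i < n_1$, contradicting the minimality of $n_1$ among the $n_k$. Since the half-plane $\{x \geq n_1\}$ is convex and contains every $D(i,j)$, it contains $N(q)$, and in particular $i \geq n_1 = \gamma$ for each $(i,j)$ with $b_{ij}\neq 0$.

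There is essentially no obstacle here; the only thing to be careful about is making precise why $n_1$ is the minimum of all first coordinates of lattice points appearing in $q$ and not merely of the vertices — but this is immediate from the structure of $D(i,j)$ as an upward-and-rightward closed quadrant, whose convex hull is a "staircase" region whose leftmost extent is dictated by the smallest $i$ with $b_{ij}\neq 0$ for some $j$. So the proof is a one-line consequence of the definitions, exactly parallel to Lemma \ref{lem 1 for main lem: case2} for Case 2, where $d = m_s$ was the minimal exponent of $w$; here $\gamma = n_1$ is the minimal exponent of $z$.
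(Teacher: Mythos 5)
Your argument is correct and is exactly the paper's reasoning: the paper dismisses this as "clear since $\gamma = n_1$", i.e.\ $n_1$ is by construction the minimal $x$-coordinate occurring in the Newton polygon, and you have merely spelled out why the leftmost extent of the convex hull of the quadrants $D(i,j)$ coincides with the minimal $i$ with $b_{ij}\neq 0$. No gap; your write-up just makes the "clear" step explicit.
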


More precisely,
$(\gamma, d)$ is minimum in the sense that
$\gamma \leq i$, and $d \leq j$ if $\gamma = i$.

\begin{lem} \label{lem2 for main lem: case3} 
It follows that
$l_2^{-1} \gamma + d \leq l_2^{-1} i + j$ and $l_2^{-1} \gamma + d \leq \delta$
for any $(i,j)$ such that $b_{ij} \neq 0$.
\end{lem}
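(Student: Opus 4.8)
The plan is to mimic the geometric interpretation used in the proof of Lemma \ref{lem 2 for main lem: case2}, but now reading off $y$-intercepts of lines of slope $-l_2^{-1}$ rather than $x$-intercepts of lines of slope $-l_1^{-1}$. First I would observe that the three quantities $l_2^{-1}\gamma + d$, $l_2^{-1} i + j$, and $\delta$ are exactly the $y$-intercepts of the lines of slope $-l_2^{-1}$ passing through the points $(\gamma, d)$, $(i,j)$, and $(0,\delta)$, respectively. (For the last one, the line of slope $-l_2^{-1}$ through $(0,\delta)$ meets the $y$-axis at height $\delta$ itself.) So the two claimed inequalities amount to saying that, among all these lines, the one through $(\gamma,d)$ has the smallest $y$-intercept, and in particular its $y$-intercept does not exceed $\delta$.

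For the first inequality $l_2^{-1}\gamma + d \le l_2^{-1} i + j$: by definition of $l_2$ in Case 3, $-l_2^{-1}$ is the slope of the edge $L_1$ of the Newton polygon joining the vertex $(\gamma,d) = (n_1,m_1)$ to $(n_2,m_2)$. Since $(\gamma,d)$ is a vertex of $N(q)$ and $N(q)$ lies above and to the right of this edge, every point $(i,j)$ with $b_{ij}\neq 0$ lies in the closed half-plane $\{\, l_2^{-1} x + y \ge l_2^{-1}\gamma + d \,\}$ bounded below by the line through $(\gamma,d)$ with slope $-l_2^{-1}$; this is precisely the stated inequality. (Concretely one can check the supporting-line property at the two edges $L_0 = \{x = n_1\}$ and $L_1$ meeting at $(\gamma,d)$, using Lemma \ref{lem1 for main lem: case3}, and then extend to all of $N(q)$ by convexity.) For the second inequality $l_2^{-1}\gamma + d \le \delta$: this is exactly the hypothesis $T_1 \le \delta$ of Case 3, since $T_1$ is by definition the $y$-intercept of the line $L_1$, i.e. $T_1 = l_2^{-1} n_1 + m_1 = l_2^{-1}\gamma + d$; so the inequality is immediate once one unwinds the definition of $T_1$.

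I do not expect any real obstacle here: the lemma is essentially a restatement of the convexity of the Newton polygon together with the defining inequality $T_1 \le \delta$ of Case 3, phrased through the "$y$-intercept of a line of fixed slope" dictionary already introduced in Section 2. The only point requiring mild care is the degenerate-looking claim that the line of slope $-l_2^{-1}$ through $(0,\delta)$ has $y$-intercept $\delta$, which is trivial, and the bookkeeping that $T_1$ equals $l_2^{-1}\gamma + d$ rather than some rescaled version of it — a one-line computation from $L_1$ passing through $(n_1,m_1)=(\gamma,d)$ with slope $-1/l_2$.
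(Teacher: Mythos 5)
Your proposal is correct and follows essentially the same route as the paper: interpret $l_2^{-1}\gamma+d$ and $l_2^{-1}i+j$ as $y$-intercepts of lines of slope $-l_2^{-1}$ through $(\gamma,d)$ and $(i,j)$, use that the edge $L_1$ supports the Newton polygon at the vertex $(n_1,m_1)=(\gamma,d)$, and note $l_2^{-1}\gamma+d=T_1\leq\delta$ by the Case 3 hypothesis. The extra convexity bookkeeping you supply is exactly what the paper leaves implicit.
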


\begin{proof}
The numbers $l_2^{-1} \gamma + d$ and $l_2^{-1} i + j$ are 
the $y$-intercepts of the lines with slope $-l_2^{-1}$
passing the points $(\gamma, d)$ and $(i,j)$.
In particular,
$l_2^{-1} \gamma + d = T_1 \leq \delta$.
\end{proof}

These inequalities in Lemmas \ref{lem1 for main lem: case3} 
and \ref{lem2 for main lem: case3}  induce the main lemma.

\begin{proof}[Proof of Lemma \ref{main lemma} for Case 3] 
We first define $\eta (z,w) = (q(z,w) - z^{\gamma} w^d)/ z^{\gamma} w^d$
and show the former statement.
Let $l=l_2^{-1}$ and $|z| = |tw^l|$.
Then 
\[
U_r = \{ |z| < r |w|^{l}, |w| < r \} = \{ |t| < r, 0 < |w| < r \} \subset \{ |t| < r, |w| < r \} \text{ and} 
\]
\[
|\eta|
= \left| \sum \frac{b_{ij} z^{i} w^{j}}{z^{\gamma} w^{d}} \right| 
= \left| \sum \frac{b_{ij} (tw^l)^{i} w^{j}}{(tw^l)^{\gamma} w^{d}} \right| 
= \left| \sum \frac{b_{ij} t^{i} w^{l i +j}}{t^{\gamma} w^{l \gamma + d}} \right|
\]
\[
\leq \sum |b_{ij}| |t|^{i - \gamma} |w|^{(l i + j) - (l \gamma + d)}.
\]
The conditions $i \geq \gamma$ and $l i + j \geq l \gamma + d$ ensure that
the left-hand side is a power series in $|t|$ and $|w|$, 
and so converges on $\{ |t| < r, |w| < r \}$.
Moreover, 
at least one of the inequalities $i > \gamma$ 
and $l i + j > l \gamma + d$ holds
since $i \geq \gamma$, and $j > d$ if $i = \gamma$.
More precisely,
$i - \gamma \geq 1$ and/or $(l i + j) - (l \gamma + d) \geq 1$.
Therefore, 
for any small $\varepsilon$ 
there is $r$ such that
$|\eta| < \varepsilon$ on $U_r$. 

We next show the invariance of $U_r$.
Since the inequality $|q(z,w)| < r$ is trivial,
it is enough to show that
$|p(z)| < r |q(z,w)|^{l}$ for any $(z,w)$ in $U_r$.
Because $\delta \geq l \gamma + d$,
\[
\left| \frac{p(z)}{q(z,w)^{l}} \right| 
\sim \left| \frac{z^{\delta}}{(z^{\gamma} w^{d})^{l}} \right| 
= \left| \frac{(tw^{l})^{\delta}}{((tw^{l})^{\gamma} w^{d})^{l}} \right|
= |t|^{\delta - l \gamma} |w|^{l \{ \delta - (l \gamma + d) \}}
\leq |t|^d < r^d
\]
on $U_r$ as $r \to 0$.
The condition $d \geq 2$ ensures that
$|p(z)/q(z,w)^{l}| \leq C r^d \leq C r^2 < r$ 
for some constant $C$ and sufficiently small $r$.
\end{proof}

\subsection{Blow-ups}

Assuming that $l_2^{-1}$ is integer,
we explain our results in terms of blow-ups.
Let $\pi_2 (t,w) = (t w^l, w)$ and $\tilde{f} = \pi_2^{-1} \circ f \circ \pi_2$,
where $l = l_2^{-1}$.  
Note that $\pi_2$ is the $l$-th compositions of the blow-up $(t,w) \to (tw,w)$.
Then we have
\[
\tilde{q} (t,w) = q(tw^l,w)
= t^{\gamma} w^{l \gamma + d} + \sum b_{ij} t^{i} w^{l i + j}
\]
\[
= t^{\gamma} w^{l \gamma + d} \left\{ 1 + \sum b_{ij} t^{i - \gamma} w^{(l i + j) - (l \gamma + d)} \right\}
= t^{\gamma} w^{l \gamma + d} \{ 1 + \eta (t,w) \}
\text{ and so}
\]
\[
\tilde{f} (t,w) 
= (\tilde{p} (t,w), \tilde{q} (t,w))
= \left( \dfrac{p(tw^l)}{q(tw^l,w)^l}, \ q(tw^l,w) \right)
\]
\[
= \left( \dfrac{t^{\delta - l \gamma} w^{l \{ \delta - (l \gamma + d) \}}}{\{ 1 + \eta (t,w) \}^{l}}, 
\ t^{\gamma} w^{l \gamma + d} \{ 1 + \eta (t,w) \} \right)
\sim (t^{\delta - l \gamma} w^{l \{ \delta - (l \gamma + d) \}}, \ t^{\gamma} w^{l \gamma + d}).
\]
Note that $\pi_2^{-1} (U_r) = \{ |t| < r, 0 < |w| < r \} \subset \{ |t| < r, |w| < r \}$.

\begin{prop} \label{} 
If $l = l_2^{-1} \in \mathbb{N}$, 
then $\tilde{f}$ is well-defined, holomorphic and rigid
on a neighborhood $\{ |t| < r, |w| < r \}$ of the origin. 
More precisely,
\[
\tilde{f} (t,w) = 
\left( t^{\delta - l \gamma} w^{l \{ \delta - (l \gamma + d) \}} \{ 1 + \zeta (t,w) \},
\ t^{\gamma} w^{l \gamma + d} \{ 1 + \eta (t,w) \} \right),
\]
where $\zeta$, $\eta \to 0$ as $t$, $w \to 0$.
Since $\delta - l \gamma \geq d$ and $l \gamma + d \geq d$,
it has a superattracting fixed point at the origin.
\end{prop}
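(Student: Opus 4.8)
The plan is to read off essentially all of the assertion from the explicit computation of $\tilde f = \pi_2^{-1}\circ f\circ\pi_2$ carried out just before the statement, and then to verify separately the holomorphy (with the extension across $\{w=0\}$), the superattracting property, and the rigidity. Throughout write $\Delta_r = \{|t|<r,|w|<r\}$.

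First I would record that, by Lemma~\ref{lem2 for main lem: case3} (equivalently, by the one–vertex description of $N(\tilde q)$), the function $\eta(t,w) = \sum_{(i,j)\neq(\gamma,d)} b_{ij}\,t^{\,i-\gamma} w^{\,(li+j)-(l\gamma+d)}$ has only nonnegative exponents and no constant term, hence is a convergent power series in $t$ and $w$ vanishing at the origin. Thus there is $r>0$ with $|\eta|<1/2$ on $\Delta_r$, so $1+\eta$ is a nonvanishing holomorphic function there and $\zeta := (1+\eta)^{-l}-1$ is holomorphic on $\Delta_r$ with $\zeta\to 0$ as $(t,w)\to 0$. Combined with the preceding computation this gives $\tilde q(t,w)=t^{\gamma}w^{l\gamma+d}\{1+\eta(t,w)\}$ and $\tilde p(t,w)=t^{\delta-l\gamma}w^{l\{\delta-(l\gamma+d)\}}\{1+\zeta(t,w)\}$.

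Next I would observe that the exponents appearing are nonnegative: $\delta-l\gamma\ge d\ge 2>0$ and $\delta\ge l\gamma+d$ by Lemma~\ref{lem2 for main lem: case3}. Consequently, although $\pi_2$ fails to be biholomorphic along $\{w=0\}$ and $\tilde f$ is a priori only defined on $\Delta_r\setminus\{w=0\}$, the two formulas above exhibit $\tilde p$ and $\tilde q$ as monomials times the holomorphic units $1+\zeta$ and $1+\eta$, so they extend holomorphically across $\{w=0\}$; hence $\tilde f$ is well defined and holomorphic on $\Delta_r$. For the superattracting property, the monomial parts of $\tilde p$ and $\tilde q$ have total degrees $(\delta-l\gamma)+l\{\delta-(l\gamma+d)\}$ and $\gamma+(l\gamma+d)$, both $\ge 2$ since $\delta-l\gamma\ge d\ge 2$ and $l\gamma+d\ge d\ge 2$; as the unit factors are bounded away from $0$, every monomial of $\tilde p$ and $\tilde q$ has total degree $\ge 2$, so $\tilde f(0)=0$ and $D\tilde f(0)=0$. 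Finally, for rigidity, differentiating each component as (monomial)$\times$(nonvanishing unit) shows that the Jacobian determinant of $\tilde f$ is $t^{a}w^{b}$ times a function nonvanishing on $\Delta_r$ for suitable $a,b\ge 0$, so the critical set of $\tilde f$ lies in the normal crossing divisor $\{tw=0\}$; since the unit factors do not vanish, $\tilde f$ maps the $t$-axis and the $w$-axis into coordinate axes and $\{tw=0\}$ is totally invariant, so the critical set of every iterate is contained in the forward–invariant normal crossing divisor $\{tw=0\}$, i.e. $\tilde f$ is rigid.

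The only genuinely delicate point I expect is the claim that $\tilde f$ extends holomorphically across the curve $\{w=0\}$ collapsed by $\pi_2$: here one really needs the exponents produced by $\pi_2$ to be nonnegative, which is exactly the content of Lemma~\ref{lem2 for main lem: case3}. Once this is granted, the rest is a direct reading of the displayed formula for $\tilde f$, parallel to the analogous Proposition for Case~2.
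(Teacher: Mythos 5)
Your argument is correct and follows the paper's own route: the proposition is exactly the displayed computation of $\tilde f=\pi_2^{-1}\circ f\circ\pi_2$ together with Lemmas \ref{lem1 for main lem: case3} and \ref{lem2 for main lem: case3}, which give the nonnegativity (and, for $l=l_2^{-1}\in\mathbb{N}$, integrality) of all exponents, so that $\eta$ is a convergent power series vanishing at the origin and the quotient defining $\tilde p$ extends across $\{w=0\}$. Your additional verifications of the superattracting property (using $d\geq 2$, as in the surrounding context) and of rigidity via the Jacobian being a monomial times a unit and the forward invariance of $\{tw=0\}$ are exactly the details the paper leaves implicit, and they are sound.
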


Although $\tilde{f}$ is not skew product,
it is a perturbation of a monomial map near the origin.
Hence we can construct the B\"{o}ttcher coordinate for $\tilde{f}$
by similar arguments in Section 6 of this paper,
or one may refer to \cite[pp.498-499]{f}. 
This conjugacy induces the B\"{o}ttcher coordinate for $f$ on $U_r$.

\subsection{Newton polygons}

It is helpful to consider the Newton polygon of $\tilde{q}$.
Let 
\[
\tilde{d} = l_2^{-1} \gamma + d
\text{ and } 
\tilde{j} = l_2^{-1} i + j.
\]
Then $\tilde{q}(t,w) = t^{\gamma} w^{\tilde{d}} + \sum b_{ij} t^{i} w^{\tilde{j}}$,
and Lemma \ref{lem2 for main lem: case3} is translated into the following.

\begin{lem} \label{} 
It follows that
$\tilde{d} \leq \tilde{j}$ for any $(i,j)$ such that $b_{ij} \neq 0$.
\end{lem}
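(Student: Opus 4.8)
The plan is to translate the already-proved inequality from Lemma~\ref{lem2 for main lem: case3} through the affine substitution $\tilde{d} = l_2^{-1}\gamma + d$ and $\tilde{j} = l_2^{-1} i + j$. Recall that Lemma~\ref{lem2 for main lem: case3} asserts $l_2^{-1}\gamma + d \leq l_2^{-1} i + j$ for every $(i,j)$ with $b_{ij} \neq 0$; this is literally the statement $\tilde{d} \leq \tilde{j}$ once we substitute the definitions. So the entire content of the proof is bookkeeping: unwind what $\tilde{d}$ and $\tilde{j}$ denote, cite the previous lemma, and observe that the two inequalities coincide.

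First I would recall the definition of $\tilde{q}(t,w) = t^{\gamma} w^{\tilde{d}} + \sum b_{ij} t^{i} w^{\tilde{j}}$ and note that, since $\gamma = n_1$ is the first coordinate of the leftmost vertex, the exponents of $t$ are unchanged by $\pi_2$; only the $w$-exponents are sheared. Then the claim $\tilde{d} \leq \tilde{j}$ for all $(i,j)$ with $b_{ij}\neq 0$ is, after substituting $\tilde{d} = l_2^{-1}\gamma + d$ and $\tilde{j} = l_2^{-1}i + j$, exactly the inequality $l_2^{-1}\gamma + d \leq l_2^{-1}i + j$ established in Lemma~\ref{lem2 for main lem: case3}. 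Geometrically, as the proof of that lemma records, $\tilde{d}$ and $\tilde{j}$ are the $y$-intercepts of the lines of slope $-l_2^{-1}$ through $(\gamma,d)$ and $(i,j)$, and the defining property of Case~3 is that $(\gamma,d) = (n_1,m_1)$ minimizes this intercept over the Newton polygon — the line $L_1$ supports $N(q)$ from below.

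I do not anticipate a genuine obstacle here: the lemma is a restatement, not a new estimate. The only thing to be slightly careful about is that the inequality should be asserted for \emph{all} monomials appearing in $q$, including the dominant one $(i,j) = (\gamma,d)$, for which it holds with equality; this matches the ``$\leq$'' (not ``$<$'') in the statement. One could optionally add the sharper remark — parallel to the one following Lemma~\ref{lem 1 for main lem: case2} — that $(\gamma,\tilde{d})$ is the unique vertex of minimal height, so that $N(\tilde{q})$ has the shape $D(\gamma,\tilde{d})$ when combined with Lemma~\ref{lem1 for main lem: case3}; but that belongs to the discussion after the lemma rather than to its proof. Thus the proof is simply:

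\begin{proof}
By definition, $\tilde{d} = l_2^{-1}\gamma + d$ and $\tilde{j} = l_2^{-1} i + j$, so the asserted inequality $\tilde{d} \leq \tilde{j}$ is exactly the inequality $l_2^{-1}\gamma + d \leq l_2^{-1} i + j$ of Lemma~\ref{lem2 for main lem: case3}.
\end{proof}
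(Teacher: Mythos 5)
Your proof is correct and coincides with the paper's own argument: the paper introduces this lemma precisely as the translation of Lemma~\ref{lem2 for main lem: case3} under the substitutions $\tilde{d} = l_2^{-1}\gamma + d$ and $\tilde{j} = l_2^{-1}i + j$, which is exactly what you do.
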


Therefore,
the Newton polygon of $\tilde{q}$ has just one vertex $(\gamma, \tilde{d})$:
$N(\tilde{q}) = D(\gamma, \tilde{d})$.

\begin{rem}
The linear transformation 
\[
A_2 
\begin{pmatrix} i \\ j \end{pmatrix} =
\begin{pmatrix} i \\ l_2^{-1} i + j \end{pmatrix} =
\begin{pmatrix} 1 & 0 \\ l_2^{-1} & 1 \end{pmatrix}
\begin{pmatrix} i \\ j \end{pmatrix}
\]
maps the basis $\{ (1,-l_2^{-1}), (0,1) \}$ to $ \{ (1,0), (0,1) \}$.
In other words, 
$A_2$ maps the line $L_{1}$ with slope $-l_2^{-1}$ and a vertical line 
to a horizontal line and the same vertical line.
\end{rem}

\section{Blow-ups, Newton polygons and Main lemma for Case 4}

We prove Lemma \ref{main lemma} for Case 4 in this section,
which completes the proof of the lemma.
Let $T_k \leq \delta \leq T_{k-1}$ for some $2 \leq k \leq s-1$,
\[
(\gamma, d) = (n_k, m_k), \
l_1 = \frac{n_k - n_{k-1}}{m_{k-1} - m_k} \text{ and } 
l_1 + l_2= \frac{n_{k+1} - n_k}{m_k - m_{k+1}}.
\] 
Note that $\delta > d$ and $\gamma > 0$ by the setting.
Let $f(z, w) = \left( z^{\delta}, z^{\gamma} w^{d} + \sum b_{ij} z^{i} w^{j} \right)$.
Against the previous two sections,
we first explain our results in terms of blow-ups and of Newton polygons
in Section 4.1,
and then prove Lemma \ref{main lemma} in Section 4.2.

\subsection{Blow-ups}

Assuming that $l_1$ and $l_2^{-1}$ are integer,
we blow-up $f$ to a nice superattracting germ
for which the B\"{o}ttcher coordinate exists on a neighborhood of the origin.

The strategy is to combine the blow-ups in Cases 2 and 3.
We first blow-up $f$ to $\tilde{f}_1$
by $\pi_1$ as in Case 2.
It then turns out that $\tilde{f}_1$ is a holomorphic skew product in Case 3.
We next blow-up $\tilde{f}_1$ to $\tilde{f}_2$
by $\pi_2$ as in Case 3.
The map $\tilde{f}_2$ is a perturbation of a monomial map near the origin,
and we obtain the B\"{o}ttcher coordinate.

\subsubsection{First blow-up}

We have the same inequalities as in Case 2. 

\begin{lem} \label{} 
It follows that
$l_1 \delta \leq \gamma + l_1 d \leq i + l_1 j$ for any $(i,j)$ such that $b_{ij} \neq 0$.
\end{lem}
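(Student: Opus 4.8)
The plan is to repeat the geometric argument behind Lemma~\ref{lem 2 for main lem: case2}, with the edge $L_{s-1}$ there replaced by the edge $L_{k-1}$ here. First I would note that $l_1\delta$, $\gamma+l_1 d$ and $i+l_1 j$ are precisely the $x$-intercepts of the three lines of slope $-l_1^{-1}$ passing through $(0,\delta)$, $(\gamma,d)=(n_k,m_k)$ and $(i,j)$, respectively, and that in Case~4 we have $l_1>0$ (since $n_{k-1}<n_k$ and $m_{k-1}>m_k$), so that comparing $x$-intercepts of these parallel lines is the same as comparing their $y$-intercepts.

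For the right-hand inequality $\gamma+l_1 d\le i+l_1 j$, the key observation is that the line of slope $-l_1^{-1}$ through $(\gamma,d)$ is exactly the edge $L_{k-1}$, one of the two edges of $N(q)$ meeting at the vertex $(n_k,m_k)$. By convexity of $N(q)$, the whole polygon lies in the closed half-plane bounded by $L_{k-1}$ on the side containing $N(q)$, namely $\{\,x+l_1 y\ge \gamma+l_1 d\,\}$; applying this to $(i,j)\in N(q)$, which holds whenever $b_{ij}\ne 0$, gives the claim.

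For the left-hand inequality $l_1\delta\le \gamma+l_1 d$, I would use the defining hypothesis of Case~4, namely $\delta\le T_{k-1}$. Since $T_{k-1}$ is the $y$-intercept of $L_{k-1}$ and $\delta$ is the $y$-intercept of the line of slope $-l_1^{-1}$ through $(0,\delta)$, the latter line sits no higher than $L_{k-1}$; as $l_1>0$, its $x$-intercept $l_1\delta$ is therefore at most the $x$-intercept $\gamma+l_1 d$ of $L_{k-1}$.

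I do not expect a real obstacle here: the argument is a short reading of the Newton-polygon geometry, entirely parallel to Case~2. The only points deserving a moment's care are identifying which side of the edge $L_{k-1}$ contains $N(q)$ — it is the side away from the origin, since $N(q)$ is a union of translated first quadrants $D(i,j)$ — and recording that $l_1>0$ so that the intercept comparisons keep the stated direction.
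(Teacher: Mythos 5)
Your proof is correct and is essentially the paper's own argument: the paper also compares the $x$-intercepts of the parallel lines of slope $-l_1^{-1}$ through $(0,\delta)$, $(\gamma,d)$ and $(i,j)$, with the right-hand inequality coming from $L_{k-1}$ being an edge of the convex polygon $N(q)$ and the left-hand one from the Case~4 hypothesis $\delta \leq T_{k-1}$. You have simply spelled out the details (convexity, $l_1>0$, intercept comparison) that the paper leaves implicit.
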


\begin{proof}
These numbers $l_1 \delta$, $\gamma + l_1 d$ and $i + l_1 j$ are 
the $x$-intercepts of the lines with slope $-l_1^{-1}$
passing the points $(0, \delta)$, $(\gamma, d)$ and $(i,j)$.
\end{proof}

Let $\tilde{\gamma} = \gamma + l_1 d - l_1 \delta$
and $\tilde{i} = i + l_1 j - l_1 \delta$
as in Case 2.

\begin{lem} \label{inequalities for first blow-up} 
It follows that
$0 \leq \tilde{\gamma} \leq \tilde{i}$ for any $(i,j)$ such that $b_{ij} \neq 0$.
\end{lem}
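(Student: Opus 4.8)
The plan is to deduce this immediately from the inequality $l_1\delta \le \gamma + l_1 d \le i + l_1 j$ just established, by subtracting the constant $l_1\delta$ from every term. Writing $\tilde{\gamma} = \gamma + l_1 d - l_1\delta$ and $\tilde{i} = i + l_1 j - l_1\delta$, the left inequality $l_1\delta \le \gamma + l_1 d$ becomes $0 \le \tilde{\gamma}$, and the full chain becomes $0 \le \tilde{\gamma} \le \tilde{i}$. So there is essentially no new content: this lemma is the preceding one transported by the affine map $A_1(i,j) = (i + l_1 j - l_1\delta,\, j)$, exactly as in Section 2.3 for Case 2.

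It is worth recording which hypothesis each half uses. The bound $\tilde{\gamma} \ge 0$ is equivalent to $l_1\delta \le \gamma + l_1 d$; since $l_1 = (n_k - n_{k-1})/(m_{k-1} - m_k)$ is the negative reciprocal of the slope of $L_{k-1}$ and $(\gamma,d) = (n_k,m_k)$ lies on $L_{k-1}$, one has $\gamma + l_1 d = l_1 T_{k-1}$, so (using $l_1 > 0$) the bound is precisely the Case 4 hypothesis $\delta \le T_{k-1}$. The bound $\tilde{\gamma} \le \tilde{i}$, i.e. $\gamma + l_1 d \le i + l_1 j$, says that $(i,j)$ lies in the closed upper-right half-plane bounded by $L_{k-1}$; this holds because $L_{k-1}$ is an edge of the convex region $N(q)$ and every $(i,j)$ with $b_{ij}\ne 0$ lies in $N(q)$. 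Geometrically $A_1$ carries $L_{k-1}$ to the vertical line $\{x = \tilde{\gamma}\}$ and preserves horizontal lines, so the chain just records that, after this shear-and-shift, the corner $(\tilde{\gamma}, d)$ becomes the leftmost vertex and sits at $x = \tilde{\gamma} \ge 0$.

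There is no genuine obstacle here; the only thing to be careful about is the bookkeeping of signs and the standing facts of Case 4 that $\delta > d$ and $\gamma > 0$, so that $l_1 > 0$ and the shift is meaningful. What this lemma is really for is to launch the next step: it shows that the second component $\tilde{q}_1$ of $\tilde{f}_1 = \pi_1^{-1}\circ f \circ \pi_1$ has Newton polygon with leftmost vertex $(\tilde{\gamma}, d)$, after which I would check that the remaining vertices, obtained by applying $A_1$ to $(n_k,m_k),\dots,(n_s,m_s)$, still realize the Case 3 configuration relative to $\delta$ (the relevant edge now being $A_1(L_k)$), so that the Case 3 blow-up $\pi_2$ applies to $\tilde{f}_1$ as announced.
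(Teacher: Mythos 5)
Your proof is correct and is essentially the paper's argument: the paper obtains the chain $l_1\delta \leq \gamma + l_1 d \leq i + l_1 j$ from the $x$-intercepts of lines of slope $-l_1^{-1}$ (equivalently, $\delta \leq T_{k-1}$ and the convexity of $N(q)$ with $L_{k-1}$ as an edge), and the lemma follows by subtracting $l_1\delta$, exactly as you do. Your extra bookkeeping about which hypothesis yields which half is consistent with the paper and adds nothing that conflicts with it.
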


More precisely,
$(\tilde{\gamma}, d)$ is minimum in the sense that
$\tilde{\gamma} \leq \tilde{i}$, and $d \leq j$ if $\tilde{\gamma} = \tilde{i}$.

Let $\pi_1 (z,c) = (z, z^{l_1} c)$
and $\tilde{f}_1 = \pi_1^{-1} \circ f \circ \pi_1$ as in Case 2.
Then 
\[
\tilde{f}_1 (z,c) 
= (\tilde{p}_1 (z), \tilde{q}_1 (z,c))
= \left( p(z), \ \dfrac{q(z,z^{l_1} c)}{p(z)^{l_1}} \right) 
\]
\[
= \left( z^{\delta}, z^{\gamma+l_1 d - l_1 \delta} c^d 
+ \sum b_{ij} z^{i + l_1 j - l_1 \delta} c^{j} \right)
= \left( z^{\delta}, z^{\tilde{\gamma}} c^d 
+ \sum b_{ij} z^{\tilde{i}} c^{j} \right).
\]

\begin{prop} \label{} 
If $l_1 \in \mathbb{N}$, 
then $\tilde{f}_1$ is well-defined, holomorphic and skew product 
on a neighborhood of the origin.  
More precisely,
\[
\tilde{f}_1 (z,c) =
\left( z^{\delta}, z^{\tilde{\gamma}} c^d + \sum b_{ij} z^{\tilde{i}} c^{j} \right),
\]
and it has a superattracting fixed point at the origin.
\end{prop}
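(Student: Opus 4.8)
The plan is to prove the proposition by direct substitution, mirroring the computation already carried out for Case 2 in Section 2.2, and then reading off the stated conclusions from the inequalities in Lemma \ref{inequalities for first blow-up}. First I would observe that when $l_1 \in \mathbb{N}$ the map $\pi_1(z,c) = (z, z^{l_1} c)$ is a genuine holomorphic map (the $l_1$-fold composition of the elementary blow-up $(z,c) \mapsto (z,zc)$), so the composite $\tilde f_1 = \pi_1^{-1} \circ f \circ \pi_1$ is defined wherever $f \circ \pi_1$ lands in the locus where $\pi_1^{-1}$ is holomorphic; since $p(z) = z^{\delta}$, the first coordinate of $f \circ \pi_1$ is $z^{\delta}$, so dividing the second coordinate by $p(z)^{l_1} = z^{l_1 \delta}$ is the correct formula for $\pi_1^{-1}$. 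This gives the displayed expression
\[
\tilde f_1(z,c) = \left( z^{\delta}, \ z^{\tilde\gamma} c^{d} + \sum b_{ij} z^{\tilde i} c^{j} \right),
\]
where $\tilde\gamma = \gamma + l_1 d - l_1 \delta$ and $\tilde i = i + l_1 j - l_1 \delta$.

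Next I would invoke Lemma \ref{inequalities for first blow-up}, which says $0 \le \tilde\gamma \le \tilde i$ for every $(i,j)$ with $b_{ij} \ne 0$. The inequality $\tilde i \ge 0$ guarantees every exponent of $z$ appearing in $\tilde q_1$ is a nonnegative integer (it is an integer precisely because $l_1 \in \mathbb{N}$), and the exponents of $c$ are already nonnegative integers $j \ge 0$; hence $\tilde q_1$ is a genuine power series (in fact a polynomial, if $q$ is) with no negative powers, so $\tilde f_1$ is holomorphic on a neighborhood of the origin. The skew-product form is immediate since the first coordinate $z^{\delta}$ depends on $z$ alone. For the superattracting fixed point: $\tilde f_1(0) = 0$ because $\tilde\gamma \ge 0$ and $d \ge 1$ (indeed $d \ge 2$ in the standing hypothesis, but $d \ge 1$ suffices here) and $\delta \ge 2$, and the derivative $D\tilde f_1(0)$ is nilpotent because every monomial in $\tilde q_1$ has total degree $\tilde i + j$; one checks $\tilde\gamma + d \ge 1$ and in fact the constant and linear terms vanish — more carefully, the $(1,1)$-entry is $0$ since the first coordinate is $z^{\delta}$ with $\delta \ge 2$, and the $(2,2)$-entry is $0$ since $\tilde q_1$ has no pure $c^{1}$ term (that would force $\tilde i = 0$, $j = 1$, i.e. a point on $L_{s-1}$ with $j = 1 < d$, contradicting $d \le j$ when $\tilde\gamma = \tilde i$, and otherwise $\tilde\gamma > 0$ keeps a positive power of $z$).

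I do not expect a serious obstacle here: the proposition is essentially a bookkeeping statement that the blow-up formula is holomorphic, and all the needed arithmetic is encapsulated in the preceding lemma. The only point requiring a little care is the verification that $D\tilde f_1(0) = 0$ rather than merely nilpotent — that is, that there is genuinely no linear term in $\tilde q_1$ — which follows from the minimality clause after Lemma \ref{inequalities for first blow-up} (namely $\tilde\gamma \le \tilde i$ with $d \le j$ when equality holds): a linear term $c^{1}$ would need $\tilde i = 0$ and $j = 1$, forcing $\tilde\gamma = 0$ and hence $d \le 1$, while a linear term $z^{1}$ would need $\tilde i = 1$, $j = 0$, contradicting $d \le j$. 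Under the standing assumption $d \ge 2$ both are excluded, so the origin is superattracting. I would also remark, as in the Case 2 discussion, that $\tilde f_1$ is actually defined on all of $\pi_1^{-1}$ of the domain of $f$, including the $c$-axis, since $\pi_1$ is a morphism; but for the proposition as stated it is enough to restrict to a neighborhood of the origin.
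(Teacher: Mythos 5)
Your argument is correct and is essentially the paper's own: the displayed formula is exactly the substitution computation preceding the proposition, and Lemma \ref{inequalities for first blow-up} together with $l_1\in\mathbb{N}$ makes every exponent of $z$ and $c$ a nonnegative integer, which yields well-definedness across $\{z=0\}$, holomorphy, the skew-product form, and the vanishing of the diagonal entries of $D\tilde f_1(0)$. One caution: your stronger claim that $D\tilde f_1(0)=0$ (no $z$-linear term at all) is more than the proposition asserts and its justification breaks down when $\tilde\gamma=0$ (i.e.\ $\delta=T_{k-1}$), because the clause $d\le j$ applies only when $\tilde i=\tilde\gamma$, so a monomial $z^{1}c^{0}$ with $\tilde i=1>\tilde\gamma=0$ may survive; superattraction in the paper's sense (both eigenvalues of the triangular Jacobian equal to zero) needs only the $(1,1)$- and $(2,2)$-entries to vanish, which you do establish correctly under $d\ge 2$ (and, a small slip, the relevant line in Case 4 is $L_{k-1}$, not $L_{s-1}$).
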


Note that 
$(\tilde{\gamma}, d)$ is the vertex of the Newton polygon $N(\tilde{q}_1)$
whose $x$-coordinate is minimum,
and that
$N(\tilde{q}_1)$ has other vertices such as $(\tilde{n}_{k+1}, m_{k+1})$.
Hence the situation resembles that of Case 3.

We illustrate that $\tilde{f}_1$ is actually in Case 3. 
Recall that $L_{k}$ is the line passing 
the vertices $(\gamma, d)$ and $(n_{k+1}, m_{k+1})$,
and $T_{k}$ is the $y$-intercept of $L_{k}$.
The slope of $L_{k}$ is $-(l_1 + l_2)^{-1}$ 
and so $T_{k} = (l_1 + l_2)^{-1} \gamma + d$.
Let $\tilde{L}_{k}$ be the line passing 
the vertices $(\tilde{\gamma}, d)$ and $(\tilde{n}_{k+1}, m_{k+1})$,
and $\tilde{T}_{k}$ the $y$-intercept of $\tilde{L}_{k}$,
where $\tilde{n}_{k+1} = n_{k+1} + l_1 m_{k+1} - l_1 \delta$.
Then the slope of $\tilde{L}_{k}$ is $-l_2^{-1}$ 
and so $\tilde{T}_{k} = l_2^{-1} \tilde{\gamma} + d$.
The condition $T_k \leq \delta$ implies the following lemma and proposition.

\begin{lem} \label{} 
It follows that
$\tilde{T}_k \leq \delta$. 
More precisely,
$\tilde{T}_k < \delta$ if $T_k < \delta$,
and $\tilde{T}_k = \delta$ if $T_k = \delta$.
\end{lem}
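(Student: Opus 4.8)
The plan is to compute $\tilde T_k$ directly from the definitions and compare it with $\delta$ using the hypothesis $T_k \leq \delta$. Recall that $\tilde T_k = l_2^{-1}\tilde\gamma + d$ and $\tilde\gamma = \gamma + l_1 d - l_1\delta$, so
\[
\tilde T_k = l_2^{-1}(\gamma + l_1 d - l_1 \delta) + d = (l_2^{-1}\gamma + d) + l_1 l_2^{-1}(d - \delta).
\]
On the other hand $T_k = (l_1+l_2)^{-1}\gamma + d$, so I would first express $l_2^{-1}\gamma + d$ in terms of $T_k$. Writing $\gamma = (l_1 + l_2)(T_k - d)$ gives $l_2^{-1}\gamma + d = l_2^{-1}(l_1+l_2)(T_k - d) + d = (1 + l_1 l_2^{-1})(T_k - d) + d = T_k + l_1 l_2^{-1}(T_k - d)$. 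Substituting back,
\[
\tilde T_k = T_k + l_1 l_2^{-1}(T_k - d) + l_1 l_2^{-1}(d - \delta) = T_k + l_1 l_2^{-1}(T_k - \delta).
\]

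This identity is the crux: since $l_1 \geq 0$ and $l_2^{-1} > 0$ (note $l_1 l_2^{-1}$ is a nonnegative rational by the Case 4 setup, and in fact $l_1 l_2^{-1} > 0$ because $l_1 = \frac{n_k - n_{k-1}}{m_{k-1}-m_k} > 0$ when $\gamma = n_k > n_1 \geq 0$), the factor $l_1 l_2^{-1}$ is strictly positive. Hence $\tilde T_k - \delta = (1 + l_1 l_2^{-1})(T_k - \delta)$, which has the same sign as $T_k - \delta$. The hypothesis $T_k \leq \delta$ therefore gives $\tilde T_k \leq \delta$, with strict inequality exactly when $T_k < \delta$ and equality exactly when $T_k = \delta$. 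This yields all three assertions at once.

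The main obstacle I anticipate is purely bookkeeping rather than conceptual: one must be careful that $l_1$, $l_2$, $l_2^{-1}$ are all the quantities attached to the \emph{original} Newton polygon $N(q)$ (as fixed at the start of Section 4), and that after the first blow-up the slope of $\tilde L_k$ is $-l_2^{-1}$ exactly — this is the content of the preceding paragraph in the text and should be cited rather than re-derived. A secondary point worth stating explicitly is why $l_1 > 0$ (equivalently $l_1 l_2^{-1} \neq 0$): in Case 4 we have $2 \leq k \leq s-1$, so $(\gamma,d) = (n_k,m_k)$ is an interior vertex, forcing $n_k > n_{k-1}$ and hence $l_1 > 0$; this is needed to get the strict-versus-equality dichotomy rather than merely $\tilde T_k \leq \delta$. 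Alternatively, one can avoid the algebra entirely and argue geometrically: the affine map $A_1$ sends the line $L_k$ (slope $-(l_1+l_2)^{-1}$) to $\tilde L_k$ (slope $-l_2^{-1}$) and sends the horizontal line $\{y = \delta\}$ to itself, so the vertical position of the intersection point of $L_k$ with $\{y=\delta\}$ is preserved, while the $y$-intercept scales by the ratio of the slopes — this gives the same conclusion and fits the ``affine transformation'' remark in Section 2.3.
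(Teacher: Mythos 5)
Your proof is correct and takes essentially the same route as the paper: both are direct algebraic manipulations of $T_k = (l_1+l_2)^{-1}\gamma + d$ and $\tilde{T}_k = l_2^{-1}\tilde{\gamma} + d$ under the hypothesis $T_k \leq \delta$, your identity $\tilde{T}_k - \delta = (1 + l_1 l_2^{-1})(T_k - \delta)$ being a repackaging of the paper's inequality chain that makes the strict/equality dichotomy explicit. One minor quibble: positivity of $l_1$ is not actually needed for that dichotomy, since the factor $1 + l_1 l_2^{-1} \geq 1 > 0$ in any case.
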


\begin{proof}
Since $T_k = (l_1 + l_2)^{-1} \gamma + d \leq \delta$,
$\gamma + (l_1 + l_2)d \leq (l_1 + l_2) \delta$ 
and so $\gamma + l_1 d - l_1 \delta + l_2 d \leq l_2 \delta$.
Hence $\tilde{T}_k = l_2^{-1} \tilde{\gamma} + d = l_2^{-1} (\gamma + l_1 d - l_1 \delta) + d \leq \delta$.
\end{proof}

\begin{prop} \label{} 
If $l_1 \in \mathbb{N}$, 
then $\tilde{f}_1$ is a holomorphic skew product in Case 3. 
\end{prop}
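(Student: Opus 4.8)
The plan is to verify that the blown-up skew product $\tilde f_1$ satisfies exactly the three defining conditions of Case 3, relative to its own Newton polygon $N(\tilde q_1)$. First I would record what the vertices of $N(\tilde q_1)$ are: since the first blow-up acts on exponents by the affine map $A_1(i,j) = (i + l_1 j - l_1\delta, j)$, which is a shear fixing the $j$-coordinate composed with a horizontal translation, it carries vertices of $N(q)$ to vertices of $N(\tilde q_1)$ while preserving the cyclic/linear order of the vertices and the sign of the $x$- and $y$-gaps. By Lemma \ref{inequalities for first blow-up}, $(\tilde\gamma, d)$ is the vertex of $N(\tilde q_1)$ with minimal $x$-coordinate, i.e. it plays the role of $(n_1, m_1)$ for $\tilde q_1$. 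So the candidate data for Case 3 applied to $\tilde f_1$ is: $(\gamma,d)_{\text{new}} = (\tilde\gamma, d)$, $l_1^{\text{new}} = 0$, and $l_2^{\text{new}}$ equal to the reciprocal slope of the first edge $\tilde L_k$ of $N(\tilde q_1)$, which was computed above to be $l_2$.

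The substantive point is then the inequality $T_1^{\text{new}} \le \delta$, where $T_1^{\text{new}}$ is the $y$-intercept of the first edge of $N(\tilde q_1)$ — but that edge is precisely $\tilde L_k$, so $T_1^{\text{new}} = \tilde T_k$, and the preceding lemma gives $\tilde T_k \le \delta$. (One should also note $s(\tilde q_1) > 1$, since $N(q)$ has at least the vertices $(\gamma,d)$ and $(n_{k+1}, m_{k+1})$ with $2 \le k \le s-1$, and $A_1$ sends these to two distinct vertices of $N(\tilde q_1)$; the hypothesis $\delta > d$ from the Case 4 setup guarantees $\tilde\gamma = \gamma + l_1 d - l_1 \delta$ together with $m_{k+1} < d$ keeps them genuinely distinct.) I would also confirm that $\tilde f_1$ has first component $z^\delta$ exactly, which it does by the displayed formula, so the normalization assumed throughout Sections 2–4 is inherited; the coefficient of the dominant term $z^{\tilde\gamma} c^d$ is still $b_{\gamma d} = 1$.

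Having checked that $\tilde f_1$ meets all the hypotheses of Case 3, I would invoke the Case 3 analysis of Section 3 verbatim: the construction of $\pi_2$, the second blow-up $\tilde f_2 = \pi_2^{-1}\circ \tilde f_1 \circ \pi_2$, and Proposition (Case 3 blow-up) give that $\tilde f_2$ is a perturbation of a monomial map near the origin with a superattracting fixed point there. The proof of the proposition is essentially this bookkeeping: translate the Case 4 hypotheses through $A_1$ and observe they land in the Case 3 region.

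The main obstacle — really the only place requiring care rather than mechanical checking — is making sure the \emph{ordering} and \emph{minimality} claims survive the shear: one must confirm that no exponent pair $(i,j)$ with $b_{ij}\ne 0$ but $(i,j)$ not a vertex of $N(q)$ gets mapped by $A_1$ to a point that becomes a new vertex (or destroys an old one) in $N(\tilde q_1)$. This follows because an invertible affine map sends convex hulls to convex hulls and extreme points to extreme points, so $N(\tilde q_1) = A_1(N(q))$ up to the unbounded-quadrant convention $D(i,j)$, and the edge directions transform predictably; still, the cleanest way to present it is to note that $A_1$ maps each $D(i,j)$ to $D(A_1(i,j))$ because $A_1$ preserves the partial order "$x$ increases, $y$ increases" on the relevant cone (its matrix has nonnegative entries and fixes the $j$-direction), hence commutes with taking the convex hull of such quadrants.
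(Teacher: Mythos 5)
Your argument is correct and takes essentially the paper's own route: the paper likewise reads off from Lemma \ref{inequalities for first blow-up} that $(\tilde{\gamma},d)$ is the vertex of $N(\tilde{q}_1)$ of minimal $x$-coordinate, observes that the edge issuing from it lies on $\tilde{L}_k$ (the image of $L_k$ under $A_1$, of slope $-l_2^{-1}$), and uses the preceding lemma $\tilde{T}_k\le\delta$ to conclude that $\tilde{f}_1$ satisfies the Case 3 condition $T_1\le\delta$ for its own Newton polygon. One caution on your closing remark: the claim that $A_1$ maps each quadrant $D(i,j)$ onto $D(A_1(i,j))$, and hence that every vertex of $N(q)$ survives as a vertex of $N(\tilde{q}_1)$, is false when $l_1>0$ (the shear sends the quadrant onto the strictly smaller cone spanned by $(1,0)$ and $(l_1,1)$, and vertices of $N(q)$ lying above $(\gamma,d)$ can indeed cease to be vertices after the blow-up), but this does not damage the proof, since Case 3 only needs the leftmost vertex $(\tilde{\gamma},d)$, a second support point on $\tilde{L}_k$ such as $(\tilde{n}_{k+1},m_{k+1})$, and the inequality $\tilde{T}_k\le\delta$, all of which you establish independently of that remark.
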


\subsubsection{Second blow-up}

We have the same inequalities as in Case 3 for $\tilde{\gamma}$ and $\tilde{i}$,
instead for $\gamma$ and $i$.

\begin{lem} \label{} 
It follows that
$l_2^{-1} \tilde{\gamma} + d \leq l_2^{-1} \tilde{i} + j$ 
and $l_2^{-1} \tilde{\gamma} + d \leq \delta$ 
for any $(i,j)$ such that $b_{ij} \neq 0$.
\end{lem}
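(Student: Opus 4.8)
The plan is to prove the two inequalities $l_2^{-1}\tilde\gamma + d \le l_2^{-1}\tilde i + j$ and $l_2^{-1}\tilde\gamma + d \le \delta$ by transporting the corresponding facts already established for Case 3 through the first blow-up. The key observation is that after applying $\pi_1$ the exponents transform by $\tilde\gamma = \gamma + l_1 d - l_1\delta$ and $\tilde i = i + l_1 j - l_1\delta$, while the fiber degrees $d$ and $j$ are unchanged; so the quantities $l_2^{-1}\tilde\gamma + d$ and $l_2^{-1}\tilde i + j$ are precisely the $y$-intercepts of the lines of slope $-l_2^{-1}$ through the transformed lattice points $(\tilde\gamma, d)$ and $(\tilde i, j)$. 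This is the same geometric picture as Lemma~\ref{lem2 for main lem: case3}, now applied to $\tilde q_1$ in place of $q$.

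First I would recall from the preceding subsection that $\tilde f_1$ is a holomorphic skew product in Case~3, with dominant vertex $(\tilde\gamma, d)$ of $N(\tilde q_1)$ and with $\tilde T_k = l_2^{-1}\tilde\gamma + d$ the $y$-intercept of the edge $\tilde L_k$ of slope $-l_2^{-1}$. Since $(\tilde\gamma, d) = (\tilde n_k, m_k)$ is the leftmost vertex of $N(\tilde q_1)$, every monomial $b_{ij} z^{\tilde i} c^j$ of $\tilde q_1$ satisfies $\tilde\gamma \le \tilde i$ (this is Lemma~\ref{inequalities for first blow-up}); combined with $d \le j$ when $\tilde\gamma = \tilde i$, the lattice point $(\tilde i, j)$ lies in the region $D(\tilde\gamma, d)$, hence on or above the line through $(\tilde\gamma, d)$ of slope $-l_2^{-1}$. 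Comparing $y$-intercepts gives the first inequality $l_2^{-1}\tilde\gamma + d \le l_2^{-1}\tilde i + j$. For the second inequality, I would invoke the lemma just proved in the excerpt, namely $\tilde T_k \le \delta$, which says exactly $l_2^{-1}\tilde\gamma + d = \tilde T_k \le \delta$.

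A cleaner alternative, avoiding any reference to the Newton polygon of $\tilde q_1$, is to derive both inequalities directly by algebraic manipulation from the Case~4 hypotheses. From $T_k \le \delta$, i.e.\ $(l_1+l_2)^{-1}\gamma + d \le \delta$, one clears denominators to get $\gamma + l_1 d - l_1\delta + l_2 d \le l_2\delta$, which rearranges to $l_2^{-1}(\gamma + l_1 d - l_1\delta) + d \le \delta$, that is, $l_2^{-1}\tilde\gamma + d \le \delta$. For the first inequality, one starts from the two Case~4 facts $\gamma \le i$ with $d \le j$ when $\gamma = i$ (minimality of the vertex $(\gamma,d)$) together with the edge inequality $(l_1+l_2)^{-1}\gamma + d \le (l_1+l_2)^{-1} i + j$ valid for all $(i,j)$ with $b_{ij}\ne 0$ (since $(i,j)$ lies in $N(q)$, on or above $L_k$), and then checks that these combine to give $l_2^{-1}\tilde\gamma + d \le l_2^{-1}\tilde i + j$ after substituting $\tilde\gamma - \tilde i = (\gamma - i) + l_1(d - j)$.

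I do not expect a genuine obstacle here; the statement is a routine bookkeeping lemma whose entire content is that the first blow-up $\pi_1$ is precisely the affine change of exponents $A_1$ that straightens the edge $L_{k-1}$ to vertical while fixing horizontal lines, so the remaining edge $L_k$ and all other lattice points keep their position relative to a line of the adjusted slope $-l_2^{-1}$. The only point requiring minor care is tracking which monomials force the strict versus non-strict inequality and confirming that $l_2 > 0$ so that multiplying through by $l_2^{-1}$ preserves inequality direction; this follows since $l_2 = (n_{k+1}-n_k)/(m_k - m_{k+1}) - l_1 > 0$ by the convexity of $N(q)$ (the slopes of successive edges are strictly decreasing in absolute value, so $-(l_1+l_2)^{-1} > -l_1^{-1}$, forcing $l_2 > 0$).
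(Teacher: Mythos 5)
Your overall route is the same as the paper's: the paper proves this lemma precisely by observing that $l_2^{-1}\tilde{\gamma}+d$ and $l_2^{-1}\tilde{i}+j$ are the $y$-intercepts of the lines of slope $-l_2^{-1}$ through $(\tilde{\gamma},d)$ and $(\tilde{i},j)$ --- that is, Lemma \ref{lem2 for main lem: case3} applied to $\tilde{q}_1$, which is legitimate because $\tilde{f}_1$ has been shown to be in Case 3 --- and that $l_2^{-1}\tilde{\gamma}+d=\tilde{T}_k\le\delta$ by the lemma proved just before. Your algebraic alternative (clearing denominators in $T_k\le\delta$ for the second inequality, and converting the edge inequality for $L_k$ via $\tilde{\gamma}-\tilde{i}=(\gamma-i)+l_1(d-j)$ for the first) is a correct rederivation of the same facts.

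Two of your auxiliary justifications are wrong, however, even though the conclusions survive. First, it is not true that every exponent $(\tilde{i},j)$ of $\tilde{q}_1$ lies in $D(\tilde{\gamma},d)$: from $\tilde{\gamma}\le\tilde{i}$ together with ``$d\le j$ when $\tilde{\gamma}=\tilde{i}$'' you get no lower bound on $j$ when $\tilde{i}>\tilde{\gamma}$, and indeed $N(\tilde{q}_1)$ still has vertices such as $(\tilde{n}_{k+1},m_{k+1})$ with $m_{k+1}<d$ (the paper points this out explicitly). The correct reason the intercept inequality holds for all exponents is that the line of slope $-l_2^{-1}$ through $(\tilde{\gamma},d)$ is the edge $\tilde{L}_k$ of the convex polygon $N(\tilde{q}_1)$, hence a supporting line; equivalently, undoing the shear $A_1$, it is the statement that every $(i,j)$ with $b_{ij}\ne0$ lies on or above the edge $L_k$ of $N(q)$. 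Second, in your ``cleaner alternative'' the premises ``$\gamma\le i$, and $d\le j$ if $\gamma=i$'' are false in Case 4, where $(\gamma,d)=(n_k,m_k)$ with $2\le k\le s-1$ is an interior vertex (for instance $n_{k-1}<\gamma$). Fortunately you never use them: as your own substitution shows, $l_2^{-1}\tilde{\gamma}+d\le l_2^{-1}\tilde{i}+j$ is equivalent, after multiplying by $l_2>0$ and rearranging, to $(l_1+l_2)^{-1}\gamma+d\le(l_1+l_2)^{-1}i+j$, which is exactly the supporting-line inequality for $L_k$; so the algebraic argument is complete once those superfluous false premises are deleted.
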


\begin{proof}
The numbers $l_2^{-1} \tilde{\gamma} + d$ and $l_2^{-1} \tilde{i} + j$ are 
the $y$-intercepts of the lines with slope $-l_2^{-1}$
passing the points $(\tilde{\gamma}, d)$ and $(\tilde{i},j)$.
In particular,
$l_2^{-1} \tilde{\gamma} + d = \tilde{T}_k \leq \delta$.
\end{proof}

Let $\tilde{d} = l_2^{-1} \tilde{\gamma} + d$
and $\tilde{j} = l_2^{-1} \tilde{i} + j$
as in Case 3.

\begin{lem} \label{inequalities for second blow-up} 
It follows that
$\tilde{d} \leq \tilde{j}$ and $\tilde{d} \leq \delta$ for any $(i,j)$ such that $b_{ij} \neq 0$.
\end{lem}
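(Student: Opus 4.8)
The plan is to recognize that this lemma is just the lemma immediately above it, rewritten in the abbreviated notation $\tilde d = l_2^{-1}\tilde\gamma + d$ and $\tilde j = l_2^{-1}\tilde i + j$ — exactly the way Lemma~\ref{lem2 for main lem: case3} was repackaged in Section~3.3 for the first blow-up. First I would recall from the preceding lemma that $l_2^{-1}\tilde\gamma + d \le l_2^{-1}\tilde i + j$ and $l_2^{-1}\tilde\gamma + d \le \delta$ hold for every $(i,j)$ with $b_{ij}\neq 0$. Since the left-hand side of both inequalities is by definition $\tilde d$, and the right-hand side of the first is $\tilde j$, the two assertions $\tilde d \le \tilde j$ and $\tilde d \le \delta$ follow at once.

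If one prefers the Newton-polygon picture to a bare substitution, I would argue as in the proof of Lemma~\ref{lem2 for main lem: case3}, now applied to $\tilde f_1$ in place of $f$: the transformation $A_1$ of Section~2.3 sends the exponent $(i,j)$ of each nonzero monomial of $q$ to $(\tilde i, j)$ and the vertex $(\gamma,d)$ to $(\tilde\gamma, d)$, and by Lemma~\ref{inequalities for first blow-up} the point $(\tilde\gamma,d)$ is the bottom-left vertex of $N(\tilde q_1)$, i.e.\ $\tilde\gamma \le \tilde i$, with $d \le j$ when $\tilde\gamma = \tilde i$. Then $\tilde d$ and $\tilde j$ are the $y$-intercepts of the lines of slope $-l_2^{-1}$ through $(\tilde\gamma,d)$ and $(\tilde i,j)$, so minimality of $(\tilde\gamma,d)$ gives $\tilde d \le \tilde j$, while $\tilde d = l_2^{-1}\tilde\gamma + d = \tilde T_k$ and the inequality $\tilde T_k \le \delta$ established in Section~4.1.1 give $\tilde d \le \delta$.

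I expect no genuine obstacle here. The only point requiring care is the chain of identifications $\tilde d = l_2^{-1}\tilde\gamma + d = \tilde T_k$ together with the observation that $\tilde d \le \delta$ is not a fresh estimate but merely the already-proved translation $T_k \le \delta \Rightarrow \tilde T_k \le \delta$ surviving the first blow-up. With these identifications in hand the lemma is immediate, and it is precisely what is needed to set up the second blow-up $\pi_2$ for $\tilde f_1$ exactly as $\pi_2$ was set up for $f$ in Case~3.
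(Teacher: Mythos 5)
Your proposal matches the paper exactly: the paper states this lemma without a separate proof precisely because it is the immediately preceding lemma rewritten with the substitutions $\tilde d = l_2^{-1}\tilde\gamma + d$ and $\tilde j = l_2^{-1}\tilde i + j$, with $\tilde d \le \delta$ coming from the identity $\tilde d = \tilde T_k$ and the earlier estimate $\tilde T_k \le \delta$. Your supplementary Newton-polygon justification is likewise the same $y$-intercept argument the paper uses for the preceding lemma, so there is nothing to correct.
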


In particular,
the minimality of $(\tilde{\gamma}, \tilde{d})$ follows from 
Lemmas \ref{inequalities for first blow-up} and \ref{inequalities for second blow-up}.

\begin{cor}\label{cor for second blow-up}
It follows that
$0 \leq \tilde{\gamma} \leq \tilde{i}$ and $d \leq \tilde{d} \leq \tilde{j}$ for any $(i,j)$ such that $b_{ij} \neq 0$.
\end{cor}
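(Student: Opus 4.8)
The plan is to simply combine the two inequality lemmas that precede the statement, tracking both coordinates of each exponent point through the two successive affine changes of variables. Recall that the first blow-up sends the exponent $(i,j)$ of a monomial in $q$ to $(\tilde i, j)$ with $\tilde i = i + l_1 j - l_1 \delta$, and in particular the distinguished exponent $(\gamma,d)$ to $(\tilde\gamma, d)$ with $\tilde\gamma = \gamma + l_1 d - l_1\delta$. The second blow-up then sends $(\tilde i, j)$ to $(\tilde i, \tilde j)$ with $\tilde j = l_2^{-1}\tilde i + j$, and $(\tilde\gamma, d)$ to $(\tilde\gamma, \tilde d)$ with $\tilde d = l_2^{-1}\tilde\gamma + d$. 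So the combined map is the composition of the two affine transformations $A_1$ and $A_2$ recorded in the remarks for Cases 2 and 3.

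The first chain of inequalities, $0 \leq \tilde\gamma \leq \tilde i$, is exactly Lemma \ref{inequalities for first blow-up}, which is proved from the Case 2 inequality $l_1\delta \leq \gamma + l_1 d \leq i + l_1 j$ together with $\tilde\gamma = \gamma + l_1 d - l_1\delta \geq 0$. The second chain, $d \leq \tilde d \leq \tilde j$, I would obtain as follows. The inequality $\tilde d \leq \tilde j$ is the first assertion of Lemma \ref{inequalities for second blow-up}. For the lower bound $d \leq \tilde d$, note that $\tilde d - d = l_2^{-1}\tilde\gamma$, and since $l_2^{-1} \geq 0$ (we are in Case 4, so $l_2$ is a positive rational) and $\tilde\gamma \geq 0$ by the first lemma, the product is nonnegative; hence $\tilde d \geq d$. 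This is the same computation already indicated in the statement ``$d \leq \tilde d \leq \tilde j$'' of Lemma \ref{inequalities for second blow-up}, now packaged together with the first-blow-up estimate.

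There is essentially no obstacle here: the corollary is a bookkeeping consequence of the two lemmas, whose content is that both blow-ups $\pi_1$ and $\pi_2$ move every Newton-polygon point of $q$ in the closed cone with apex at the (transformed) distinguished vertex $(\tilde\gamma,\tilde d)$, and that this vertex itself stays in the first quadrant with second coordinate at least $d$. The only point worth stating explicitly is that $\tilde\gamma \geq 0$, which is needed both for the left half of the first chain and, via $\tilde d = l_2^{-1}\tilde\gamma + d$, for the left half of the second chain; everything else is immediate from Lemmas \ref{inequalities for first blow-up} and \ref{inequalities for second blow-up}. Consequently $(\tilde\gamma,\tilde d)$ is the unique vertex of the Newton polygon of the twice-transformed $\tilde q_2$, which is what is needed to place $\tilde f_2$ in the Case 1 situation in Section 4.2.
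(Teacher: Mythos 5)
Your proposal is correct and follows essentially the same route as the paper: the corollary is exactly the combination of Lemma \ref{inequalities for first blow-up} (giving $0 \leq \tilde{\gamma} \leq \tilde{i}$) and Lemma \ref{inequalities for second blow-up} (giving $\tilde{d} \leq \tilde{j}$), with the remaining bound $d \leq \tilde{d}$ read off from $\tilde{d} - d = l_2^{-1}\tilde{\gamma} \geq 0$. The only slip is attributing the chain $d \leq \tilde{d} \leq \tilde{j}$ to the statement of Lemma \ref{inequalities for second blow-up}, which in fact only asserts $\tilde{d} \leq \tilde{j}$ and $\tilde{d} \leq \delta$; your own derivation of $d \leq \tilde{d}$ is the intended one and makes this harmless.
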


Let $\pi_2 (t,c) = (t c^{l_2^{-1}}, c)$ and $\tilde{f}_2 = \pi_2^{-1} \circ \tilde{f}_1 \circ \pi_2$ as in Case 3.
Then 
\[
\tilde{q}_2 (t,c) = \tilde{q}_1 (tc^{l_2^{-1}},c)
= (tc^{l_2^{-1}})^{\tilde{\gamma}} c^d + \sum b_{ij} (tc^{l_2^{-1}})^{\tilde{i}} c^{j} 
= t^{\tilde{\gamma}} c^{\tilde{d}} + \sum b_{ij} t^{\tilde{i}} c^{\tilde{j}}
\]
\[
= t^{\tilde{\gamma}} c^{\tilde{d}} \left\{ 1 + \sum b_{ij} t^{\tilde{i} - \tilde{\gamma}} c^{\tilde{j} - \tilde{d}} \right\}
= t^{\tilde{\gamma}} c^{\tilde{d}} \left\{ 1 + \eta_2 (t,c) \right\}
\sim t^{\tilde{\gamma}} c^{\tilde{d}}
\text{ and so}
\]
\[
\tilde{f}_2 (t,c) 
= (\tilde{p}_2 (t,c), \tilde{q}_2 (t,c))
= \left( \dfrac{\tilde{p}_1 (tc^{l_2^{-1}})}{\tilde{q}_1 (tc^{l_2^{-1}},c)^{l_2^{-1}}}, \ \tilde{q}_1 (tc^{l_2^{-1}},c) \right)
\]
\[
= \left( \dfrac{ t^{\delta - l_2^{-1} \tilde{\gamma}} c^{l_2^{-1} (\delta - \tilde{d})} }{ \{ 1 + \eta_2 (t,c) \}^{l_2^{-1}} },
\ t^{\tilde{\gamma}} c^{\tilde{d}} \{ 1 + \eta_2 (t,c) \} \right)
\sim \left( t^{\delta - l_2^{-1} \tilde{\gamma}} c^{l_2^{-1} (\delta - \tilde{d})}, \ t^{\tilde{\gamma}} c^{\tilde{d}} \right).
\]

\begin{prop} \label{} 
If $l_1, l_2^{-1} \in \mathbb{N}$, 
then $\tilde{f}_2$ is well-defined, holomorphic and rigid
on a neighborhood $\{ |t| < r, |c| < r \}$ of the origin. 
More precisely,
\[
\tilde{f}_2 (t,c) =
\left( t^{\delta - l_2^{-1} \tilde{\gamma}} c^{l_2^{-1} (\delta - \tilde{d})} \{ 1 + \zeta_2 (t,c) \}, 
\ t^{\tilde{\gamma}} c^{\tilde{d}} \{ 1 + \eta_2 (t,c) \} \right),
\]
where $\zeta_2$, $\eta_2 \to 0$ as $t$, $c \to 0$.
Since $\delta - l_2^{-1} \tilde{\gamma} \geq \tilde{d} \geq d$,
it has a superattracting fixed point at the origin.
\end{prop}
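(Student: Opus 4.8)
The plan is to verify the explicit formula for $\tilde{f}_2$ by direct computation and then read off the claimed properties from the inequalities already established. First I would confirm the formula itself. Starting from the displayed expression $\tilde{f}_2(t,c) = (\tilde{p}_1(tc^{l_2^{-1}})/\tilde{q}_1(tc^{l_2^{-1}},c)^{l_2^{-1}},\ \tilde{q}_1(tc^{l_2^{-1}},c))$, I would substitute $\tilde{p}_1(z) = z^\delta$ to get numerator $(tc^{l_2^{-1}})^\delta = t^\delta c^{l_2^{-1}\delta}$, and use the factorization $\tilde{q}_1(tc^{l_2^{-1}},c) = t^{\tilde\gamma} c^{\tilde d}\{1+\eta_2(t,c)\}$ from the previous display. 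The first component then becomes $t^\delta c^{l_2^{-1}\delta} \cdot t^{-l_2^{-1}\tilde\gamma} c^{-l_2^{-1}\tilde d}\{1+\eta_2\}^{-l_2^{-1}} = t^{\delta - l_2^{-1}\tilde\gamma}\, c^{l_2^{-1}(\delta - \tilde d)}\{1+\eta_2\}^{-l_2^{-1}}$, and defining $\zeta_2 := \{1+\eta_2\}^{-l_2^{-1}} - 1$ gives exactly the stated form. Since $\eta_2 \to 0$ as $(t,c)\to 0$ (established right before the statement), and $x\mapsto (1+x)^{-l_2^{-1}}$ is holomorphic near $x=0$ with value $1$ there, $\zeta_2\to 0$ as well.

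Next I would check that all exponents are nonnegative integers so that $\tilde{f}_2$ is genuinely holomorphic on a neighborhood of the origin. For the second component this is Corollary \ref{cor for second blow-up}: $\tilde\gamma \geq 0$ and $\tilde d \geq d \geq 2 > 0$. For the first component, $\delta - l_2^{-1}\tilde\gamma$ and $l_2^{-1}(\delta - \tilde d)$ must be shown to be nonnegative integers; since $l_1, l_2^{-1} \in \mathbb{N}$ by hypothesis, $\tilde\gamma$, $\tilde d$, $l_2^{-1}\delta$ etc. are integers, so it remains to see $\delta - l_2^{-1}\tilde\gamma \geq 0$ and $\delta \geq \tilde d$. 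The second inequality is precisely $\tilde d \leq \delta$ from Lemma \ref{inequalities for second blow-up}. The first I would obtain by combining $\tilde d = l_2^{-1}\tilde\gamma + d \leq \delta$ (same lemma) with $d \geq 0$: indeed $l_2^{-1}\tilde\gamma = \tilde d - d \leq \delta - d \leq \delta$. The requirement that $\eta_2$ and hence $\zeta_2$ be holomorphic (not just a convergent power series in moduli) on $\{|t|<r, |c|<r\}$ follows because each exponent $\tilde i - \tilde\gamma$ and $\tilde j - \tilde d$ appearing in $\eta_2$ is a nonnegative integer, again by Corollary \ref{cor for second blow-up}.

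Finally, for the superattracting fixed point claim, I would note that $\tilde f_2(0,0) = (0,0)$ is immediate from the formula once all exponents are positive, and that the relevant strengthening $\delta - l_2^{-1}\tilde\gamma \geq \tilde d \geq d$ is the displayed chain in the proposition statement: the inequality $\delta - l_2^{-1}\tilde\gamma \geq \tilde d$ rearranges to $\delta \geq l_2^{-1}\tilde\gamma + \tilde d = (l_2^{-1}\tilde\gamma + d) + (\tilde d - d) \geq \tilde d$ — more cleanly, $\delta \geq l_2^{-1}\tilde\gamma + \tilde d$ follows since $\delta \geq \tilde d = l_2^{-1}\tilde\gamma + d$ gives $\delta - l_2^{-1}\tilde\gamma \geq d$, and then one checks $d \geq \tilde d$ fails in general, so instead I would argue $\delta - l_2^{-1}\tilde\gamma - \tilde d = \delta - 2l_2^{-1}\tilde\gamma - d$; using $T_k \leq \delta$, i.e. $(l_1+l_2)^{-1}\gamma + d \leq \delta$, translated through the two blow-ups. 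Rigidity then follows from Lemma \ref{main lemma} applied in this coordinate (the critical set of $\tilde f_2$ being contained in $\{tc=0\}$, which is forward invariant), exactly as in the analogous propositions for Cases 2 and 3.

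The main obstacle I anticipate is the bookkeeping around the first-component exponents: making sure that the two inequalities $\delta \geq \tilde d$ and $\delta \geq l_2^{-1}\tilde\gamma$ — and the sharper $\delta - l_2^{-1}\tilde\gamma \geq d$ needed for superattraction — are all genuinely consequences of the case hypotheses $T_k \leq \delta \leq T_{k-1}$ after passing through both affine changes $A_1$ and $A_2$, rather than accidentally requiring the strict inequality or the other endpoint. Once the translation of $T_k \leq \delta$ through the blow-ups (Lemma \ref{inequalities for second blow-up}) is invoked correctly, the rest is the routine substitution and the observation that a holomorphic function equal to $1$ at the origin composed with a holomorphic germ vanishing at the origin stays close to $1$.
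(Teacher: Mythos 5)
The computational part of your argument --- substituting $\tilde{p}_1(z)=z^{\delta}$, factoring $\tilde{q}_1(tc^{l_2^{-1}},c)=t^{\tilde{\gamma}}c^{\tilde{d}}\{1+\eta_2\}$, setting $\zeta_2=\{1+\eta_2\}^{-l_2^{-1}}-1$, and checking that all exponents are nonnegative integers via Lemma \ref{inequalities for second blow-up} and Corollary \ref{cor for second blow-up} --- is exactly the computation the paper gives just before the proposition, and it is correct. The gap is your final step. You notice, rightly, that what follows from $\tilde{d}\leq\delta$ is $\delta-l_2^{-1}\tilde{\gamma}\geq d$, not the displayed chain $\delta-l_2^{-1}\tilde{\gamma}\geq\tilde{d}$, and you then defer that stronger inequality to an unexecuted ``translation of $T_k\leq\delta$ through the two blow-ups''. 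No such translation exists: $\delta-l_2^{-1}\tilde{\gamma}\geq\tilde{d}$ is equivalent to $\delta-d\geq 2\,l_2^{-1}\tilde{\gamma}$, whereas $T_k\leq\delta$ gives only $\delta-d\geq l_2^{-1}\tilde{\gamma}$. It can genuinely fail: take $p(z)=z^{9}$ and $q(z,w)=w^{10}+z^{4}w^{6}+z^{7}w^{4}$, so the vertices are $(0,10)$, $(4,6)$, $(7,4)$, $k=2$, $(\gamma,d)=(4,6)$, $l_1=1\in\mathbb{N}$, $l_2^{-1}=2\in\mathbb{N}$, and $T_2=26/3\leq\delta=9\leq 10=T_1$ (Case 4); here $\tilde{\gamma}=1$, $\tilde{d}=8$, and $\delta-l_2^{-1}\tilde{\gamma}=7<8=\tilde{d}$.

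So the step you left open cannot be closed; the inequality in the displayed chain is an overstatement (compare the Case 3 proposition, whose wording is the correct analogue: $\delta-l\gamma\geq d$ and $l\gamma+d\geq d$). Fortunately the conclusion needs only the weaker facts you already established: $\tilde{\gamma}\geq 0$, $l_2^{-1}(\delta-\tilde{d})\geq 0$, $\delta-l_2^{-1}\tilde{\gamma}\geq d$ and $\tilde{d}\geq d$, so with $d\geq 2$ (the standing hypothesis of Lemma \ref{main lemma}, under which Section 4 operates) both components of $\tilde{f}_2$ vanish to order at least two at the origin, hence $D\tilde{f}_2(0)=0$ and the fixed point is superattracting; in the counterexample above $\tilde{f}_2\sim(t^{7}c^{2},\,tc^{8})$, which is indeed superattracting even though the stated chain fails. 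Your proof is repaired simply by dropping the attempt at $\delta-l_2^{-1}\tilde{\gamma}\geq\tilde{d}$ and quoting $\tilde{d}\leq\delta$ from Lemma \ref{inequalities for second blow-up} in the form $\delta-l_2^{-1}\tilde{\gamma}\geq d$; as written, the final paragraph asserts an unprovable inequality and is a genuine gap.
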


Therefore,
we can construct the B\"{o}ttcher coordinate for $\tilde{f}_2$
on $\{ |t| < r, |c| < r \}$,
which induces that for $\tilde{f}_1$ 
on $\{ |z| < r |c|^{l_2^{-1}}, |c| < r \}$ and 
that for $f$ 
on $U_r$.

\subsubsection{Newton polygons}

Although the Newton polygon of $\tilde{q}_1$ has at least two vertices,
the Newton polygon of $\tilde{q}_2$ has just one vertex $(\tilde{\gamma}, \tilde{d})$:
$N(\tilde{q}_2) = D(\tilde{\gamma}, \tilde{d})$.

\begin{rem}
The affine transformation
\[
A 
\begin{pmatrix} i \\ j \end{pmatrix} =
\begin{pmatrix} 1 & 0 \\ l_2^{-1} & 1 \end{pmatrix}
\left\{  
\begin{pmatrix} 1 & l_1 \\ 0 & 1 \end{pmatrix}
\begin{pmatrix} i \\ j \end{pmatrix} - 
\begin{pmatrix} l_1 \delta \\ 0 \end{pmatrix}
\right\}
\]
is the composition of the two affine transformations
\[
A_1
\begin{pmatrix} i \\ j \end{pmatrix} =
\begin{pmatrix} i + l_1 j - l_1 \delta \\ j \end{pmatrix} 
\text{ and }
A_2
\begin{pmatrix} i \\ j \end{pmatrix} =
\begin{pmatrix} i \\ l_2^{-1} i + j \end{pmatrix}. 
\]
The transformation $A_1$ maps 
the basis $\{ (1,-(l_1 +l_2)^{-1}), (-l_1,1) \}$ to $\{ (1,-l_2^{-1}), (0,1) \}$.
In other words, 
it maps the line $L_{k}$ with slope $-(l_1 +l_2)^{-1}$ and the line $L_{k-1}$ with slope $-l_1^{-1}$,
which intersect with $(\gamma, d)$,
to the line $\tilde{L}_{k}$ with slope $-l_2^{-1}$ and the vertical line,
which intersect with $(\tilde{\gamma}, d)$.
The transformation $A_2$ maps 
the basis $\{ (1,-l_2^{-1}), (0,1) \}$ to $ \{ (1,0), (0,1) \}$.
In other words, 
it maps the line $\tilde{L}_{k}$ and the vertical line 
to the horizontal line and the vertical line,
which intersect with $(\tilde{\gamma}, \tilde{d})$.
Therefore,
$A$ maps the lines $L_{k}$ and $L_{k-1}$
to the horizontal and vertical lines.
\end{rem}

\subsection{Proof of the main lemma}

The idea of the blow-ups in the previous subsection
provides a proof of Lemma \ref{main lemma}.
Because we take the absolute value in the proof,
we do not need to care whether $\tilde{f}_1$ and $\tilde{f}_2$ are well-defined.

\begin{proof}[Proof of Lemma \ref{main lemma} for Case 4] 
We first define $\eta (z,w) = (q(z,w) - z^{\gamma} w^d)/ z^{\gamma} w^d$
and show the former statement.
Let $|w| = |z^{l_1}c|$ and $|z| = |tc^{l_2^{-1}}|$.
Then 
\[
U_r = \{ |z|^{l_1 + l_2} < r^{l_2} |w|, |w| < r|z|^{l_1} \}
  = \{ 0 < |z| < r |c|^{l_2^{-1}}, 0 < |c| < r \}
\]
\[
  = \{ 0 < |t| < r, 0 < |c| < r \}
  \subset \{ |t| < r, |c| < r \},
\]
\[
\left| \frac{z^{i} w^{j}}{z^{\gamma} w^{d}} \right| 
= \left| \frac{z^{i} (z^{l_1}c)^{j}}{z^{\gamma} (z^{l_1}c)^{d}} \right| 
= \left| \frac{z^{i +l_1 j} c^{j}}{z^{\gamma + l_1 d}c^{d}} \right|
= \left| \frac{z^{\tilde{i}} c^{j}}{z^{\tilde{\gamma}}c^{d}} \right|
\]
\[
= \left| \frac{(tc^{l_2^{-1}})^{\tilde{i}} c^{j}}{(tc^{l_2^{-1}})^{\tilde{\gamma}} c^{d}} \right|
= \left| \frac{t^{\tilde{i}} c^{l_2^{-1} \tilde{i} + j}}{t^{\tilde{\gamma}} c^{l_2^{-1} \tilde{\gamma} + d}} \right|
= \left| \frac{t^{\tilde{i}} c^{\tilde{j}}}{t^{\tilde{\gamma}} c^{\tilde{d}}} \right|
\text{ and so}
\]
\[
|\eta|
\leq \sum |b_{ij}| |t|^{\tilde{i} - \tilde{\gamma}} |c|^{\tilde{j} - \tilde{d}}.
\]
The inequalities $\tilde{i} \geq \tilde{\gamma}$ 
and $\tilde{j} \geq \tilde{d}$ in Corollary \ref{cor for second blow-up} ensure that
the left-hand side is a power series in $|t|$ and $|c|$, 
and so converges on $\{ |t| < r, |c| < r \}$.
Moreover, 
at least one of the inequalities $\tilde{i} - \tilde{\gamma} > 0$ 
and $\tilde{j} - \tilde{d} > 0$ holds.
Therefore, 
for any small $\varepsilon$
there is $r$ such that
$|\eta| < \varepsilon$ on $U_r$. 

We next show the invariance of $U_r$.
Note that $|z| = |tc^{l_2^{-1}}|$ and $|w| = |t^{l_1} c^{1 + l_1 l_2^{-1}}|$
and that, formally,
\[
\tilde{f}_2 (t,c) =
\left( \dfrac{\tilde{p}_1 (tc^{l_2^{-1}})}{\tilde{q}_1 (tc^{l_2^{-1}},c)^{l_2^{-1}}}, \ \tilde{q}_1 (tc^{l_2^{-1}},c) \right)
\]
\[
= \left( \dfrac{p(tc^{l_2^{-1}})^{1+l_1 l_2^{-1}}}{q(tc^{l_2^{-1}}, t^{l_1} c^{1+l_1 l_2^{-1}})^{l_2^{-1}}}, 
\dfrac{q(tc^{l_2^{-1}}, t^{l_1} c^{1+l_1 l_2^{-1}})}{p(tc^{l_2^{-1}})^{l_1}} \right).
\]
Because $\delta \geq \tilde{d} = l_2^{-1} \tilde{\gamma} + d$,
\[
\left| \frac{p(z)^{1 + l_1 l_2^{-1}}}{q(z,w)^{l_2^{-1}}} \right| 
\sim \left| \frac{(z^{\delta})^{1 + l_1 l_2^{-1}} }{(z^{\gamma} w^{d})^{l_2^{-1}}} \right| 
= \left| \frac{ \{ (tc^{l_2^{-1}})^{\delta} \}^{1 + l_1 l_2^{-1}} }{\{ (tc^{l_2^{-1}})^{\gamma} (t^{l_1} c^{1 + l_1 l_2^{-1}})^{d} \}^{l_2^{-1}}} \right| 
\]
\[
= |t|^{\delta - l_2^{-1} \tilde{\gamma}} |c|^{l_2^{-1} (\delta - \tilde{d})}
\leq |t|^{\delta - l_2^{-1} \tilde{\gamma}} \leq |t|^d < r^d,
\]
and because $\tilde{\gamma} \geq 0$ and $\tilde{d} \geq d$,
\[
\left| \frac{q(z,w)}{p(z)^{l_1}} \right| 
\sim \left| \frac{z^{\gamma} w^{d}}{(z^{\delta})^{l_1}} \right| 
= \left| \frac{(tc^{l_2^{-1}})^{\gamma} (t^{l_1} c^{1 + l_1 l_2^{-1}})^{d}}{ \{ (tc^{l_2^{-1}})^{\delta} \}^{l_1}} \right| 
\]
\[
= |t|^{\tilde{\gamma}} |c|^{\tilde{d}}
\leq |c|^{\tilde{d}} \leq |c|^d < r^d
\]
on $U_r$ as $r \to 0$.
\end{proof}

\section{Intervals of weights and Branched coverings}


The rational numbers $l_1$ and $l_2$ are called weights 
in the previous papers \cite{u3} and \cite{u2}, respectively. 
In this section 
we introduce intervals of weights for each of which Lemma \ref{main lemma} holds.
For Cases 2 and 3, 
the intervals are used to state the results in the previous papers,
instead of the Newton polygon.

Moreover,
we associate weights in the intervals to branched coverings of $f$.
These coverings are a generalization of the blow-ups of $f$ in the previous sections,
and it might be well-defined even if the weight is rational.  
We deal with Cases 2, 3 and 4 in Sections 5.1, 5.2 and 5.3, respectively.
For Case 2,
the branched covering is well-defined for any rational number in the interval;
see Proposition \ref{branched coverings: case2}.  
On the other hand,
for Cases 3 and 4,
the case when the branched covering is well-defined seems to be limited;
see Corollaries \ref{branched coverings: case3} and \ref{branched coverings: case4}, respectively. 

\subsection{Intervals and coverings for Case 2}

In the proof of Lemma \ref{main lemma} for Case 2,
the inequalities $l_1 \delta \leq \gamma + l_1 d \leq i + l_1 j$ played a central role.
We define the interval $\mathcal{I}_f$ as 
\[
\mathcal{I}_f = 
\left\{ \ l > 0 \ | 
\begin{array}{lcr}
l \delta \leq \gamma + ld \leq i + l j
\text{ for any $i$ and $j$ s.t. } b_{ij} \neq 0
\end{array}
\right\}.
\]
It follows that $\min \mathcal{I}_f = l_1$.
In fact, 
if $\delta > d$, then $\gamma > 0$ and
\[
\mathcal{I}_f 
=
\left[
\max \left\{ \dfrac{\gamma - i}{j - d} \right\},
\dfrac{\gamma}{\delta - d}
\right]
=
\left[
\frac{\gamma - n_{s-1}}{m_{s-1} - d},
\dfrac{\gamma}{\delta - d}
\right]
=
\left[
l_1,
\dfrac{\gamma}{\delta - d}
\right],
\]
which is mapped to $[\delta, T_{s-1}]$
by the transformation $l \to l^{-1} \gamma + d$.
If $\delta \leq d$, then
the inequality $l \delta \leq \gamma + ld$ is trivial and so
$\mathcal{I}_f =[ l_1, \infty )$.

Let $U^l = \{ |z| < r, |w| < r |z|^{l} \}$.

\begin{prop} \label{} 
Lemma \ref{main lemma} in Case 2 holds on $U^l$ for any $l$ in $\mathcal{I}_f$.
\end{prop}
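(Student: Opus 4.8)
The plan is to reuse verbatim the argument already given for Lemma~\ref{main lemma} in Case~2, observing that the only two facts about the exponent $l_1$ that were used there are the inequalities $l_1 \delta \leq \gamma + l_1 d$ and $\gamma + l_1 d \leq i + l_1 j$ for all $(i,j)$ with $b_{ij} \neq 0$. By the definition of $\mathcal{I}_f$, any $l \in \mathcal{I}_f$ satisfies exactly these inequalities (with $l$ in place of $l_1$), so the proof should go through mutatis mutandis. First I would set $|w| = |z^l c|$ as before, so that $U^l_r = \{\,|z| < r,\ |w| < r|z|^l\,\} = \{\,0 < |z| < r,\ |c| < r\,\}$, and form $\eta(z,w) = (q(z,w) - z^\gamma w^d)/(z^\gamma w^d)$.

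For the first statement, I would rewrite $|\eta| \leq \sum |b_{ij}|\,|z|^{(i+lj)-(\gamma+ld)}\,|c|^{j-d}$, exactly as in Section~2.1. The inequality $\gamma + ld \leq i + lj$ (from $l \in \mathcal{I}_f$) together with $d \leq j$ (Lemma~\ref{lem 1 for main lem: case2}, which is independent of the choice of $l$) guarantees that this is a genuine power series in $|z|$ and $|c|$ converging on $\{\,|z| < r,\ |c| < r\,\}$, and that for each $(i,j) \neq (\gamma,d)$ at least one of the two exponents is a positive integer. Hence $|\eta| \to 0$ as $r \to 0$, which is the former statement.

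For the invariance, the inequality $|p(z)| < r$ is again trivial, and it remains to bound $|q(z,w)/p(z)^l|$. Using $|q(z,w)| \sim |z^\gamma w^d|$ on $U^l_r$ (which follows from the first statement) and the inequality $l\delta \leq \gamma + ld$ (again from $l \in \mathcal{I}_f$), I get
\[
\left| \frac{q(z,w)}{p(z)^l} \right| \sim \left| \frac{z^\gamma w^d}{(z^\delta)^l} \right| = |z|^{\gamma + ld - l\delta}\,|c|^d \leq |c|^d < r^d,
\]
and the hypothesis $d \geq 2$ (built into Lemma~\ref{main lemma}) yields $|q(z,w)/p(z)^l| \leq Cr^d \leq Cr^2 < r$ for small $r$, so $f(U^l_r) \subset U^l_r$. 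I do not anticipate a genuine obstacle here: the only point requiring a sentence of care is the preliminary identification $\mathcal{I}_f = [\,l_1, \gamma/(\delta - d)\,]$ (or $[\,l_1,\infty)$ when $\delta \leq d$), which shows $\mathcal{I}_f$ is nonempty and that $l_1 = \min\mathcal{I}_f$; this is the routine computation of intersecting the half-lines $\{\,l : (\gamma - i)/(j-d) \leq l\,\}$ over the vertices of $N(q)$, already indicated in the text preceding the proposition.
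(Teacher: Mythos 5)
Your proposal is correct and is essentially the paper's intended argument: the proof of Lemma~\ref{main lemma} for Case~2 in Section~2.1 only uses $d \leq j$ (with $i > \gamma$ when $j = d$) and the two inequalities $l\delta \leq \gamma + ld \leq i + lj$, which are exactly the defining conditions of $\mathcal{I}_f$, so substituting $l \in \mathcal{I}_f$ for $l_1$ gives the statement on $U^l$ verbatim. Your side remarks (that either $j-d\geq 1$ or $i-\gamma\geq 1$, and the explicit description of $\mathcal{I}_f$ with $\min\mathcal{I}_f=l_1$) match the text as well.
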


\begin{rem} \label{} 
It follows 
that $U^{l_1}$ is the largest region among $U^l$ for any $l$ in $\mathcal{I}_f$, and
that $\mathcal{I}_f \neq \emptyset$ if and only if $\delta \leq T_{s-1}$.
\end{rem}


Let $\pi_1 (\mathsf{z},c) = (\mathsf{z}^r, \mathsf{z}^s c)$
and $\tilde{f} = \pi_1^{-1} \circ f \circ \pi_1$,
where $s/r = l \in \mathcal{I}_f \cap \mathbb{Q}$.
Then
$\pi_1$ is formally the composition of $(\mathsf{z},c) \to (\mathsf{z}^r,c)$ and $(z,c) \to (z,z^{s/r} c)$,
and $\tilde{f}$ is well-defined:
\[
\tilde{f} (\mathsf{z},c) 
= (\tilde{p} (\mathsf{z}), \tilde{q} (\mathsf{z},c)) 
= \left( p(\mathsf{z}^r)^{1/r}, \ \dfrac{q(\mathsf{z}^r,\mathsf{z}^s c)}{p(\mathsf{z})^{s/r}} \right), 
\]
\[
\tilde{p} (\mathsf{z}) = p(\mathsf{z}^r)^{1/r} = (\mathsf{z}^{r \delta})^{1/r} = \mathsf{z}^{\delta}
\text{ and}
\]
\[
\tilde{q} (\mathsf{z},c) 
= \mathsf{z}^{r \gamma + sd - s \delta} c^d
\left\{ 1 + \sum b_{ij} \mathsf{z}^{(ri + sj) - (r \gamma + sd)} c^{j - d} \right\}.
\]
Moreover,
$\tilde{f}$ is holomorphic 
since $ri + sj \geq r \gamma + sd \geq s \delta$.

\begin{prop} \label{branched coverings: case2} 
For any rational number $s/r$ in $\mathcal{I}_f$, 
the lift $\tilde{f}$ is well-defined, holomorphic, skew product and rigid 
on the preimage of the domain of $f$ by $\pi_1$. 
More precisely,
\[
\tilde{f} (\mathsf{z},c) = \left( \mathsf{z}^{\delta}, \ 
\mathsf{z}^{r \gamma + sd - s \delta} c^d \left\{ 1 + \eta (\mathsf{z},c) \right\}  \right),
\]
where $\eta \to 0$ as $\mathsf{z}$, $c \to 0$, and 
it has a superattracting fixed point at the origin.
\end{prop}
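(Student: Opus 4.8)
The plan is to verify each assertion in the statement directly from the explicit formula for $\tilde{f}$ given just before it, using the two defining inequalities of the interval $\mathcal{I}_f$, namely $l\delta \leq \gamma + ld$ and $\gamma + ld \leq i + lj$ for every $(i,j)$ with $b_{ij} \neq 0$, rewritten in integer form as $s\delta \leq r\gamma + sd$ and $r\gamma + sd \leq ri + sj$ after clearing denominators via $l = s/r$. First I would record that $\pi_1(\mathsf{z},c) = (\mathsf{z}^r, \mathsf{z}^s c)$ is a well-defined branched covering onto its image, so that $\tilde{f} = \pi_1^{-1}\circ f\circ \pi_1$ makes sense formally on the preimage of the domain of $f$; the content is to check that the formal expression actually has no fractional powers, i.e. is a genuine holomorphic skew product. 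The first component is immediate: $\tilde p(\mathsf{z}) = (p(\mathsf{z}^r))^{1/r} = (\mathsf{z}^{r\delta}(1 + O(\mathsf{z})))^{1/r}$, and since $p(z) = z^\delta$ under our normalization this is literally $\mathsf{z}^\delta$ (and in general one picks the branch of the $r$-th root that is holomorphic and tangent to the identity, which exists because $p(\mathsf z^r)$ is $\mathsf z^{r\delta}$ times a unit).

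Next I would handle the second component $\tilde q(\mathsf{z},c) = q(\mathsf{z}^r, \mathsf{z}^s c)/p(\mathsf{z})^{s/r}$. Substituting $q(z,w) = z^\gamma w^d + \sum_{(i,j)\neq(\gamma,d)} b_{ij} z^i w^j$ and $p(\mathsf z) = \mathsf z^\delta$, each monomial $b_{ij} z^i w^j$ becomes $b_{ij}\,\mathsf{z}^{ri + sj - s\delta} c^j$, and the distinguished term becomes $\mathsf{z}^{r\gamma + sd - s\delta} c^d$. Factoring out $\mathsf{z}^{r\gamma + sd - s\delta} c^d$ yields
\[
\tilde q(\mathsf z, c) = \mathsf z^{r\gamma + sd - s\delta} c^d \left\{ 1 + \sum_{(i,j)\neq(\gamma,d)} b_{ij}\,\mathsf z^{(ri+sj)-(r\gamma+sd)}\, c^{\,j-d} \right\}.
\]
The inequality $ri + sj \geq r\gamma + sd$ (equivalently $i + lj \geq \gamma + ld$) guarantees the exponent of $\mathsf z$ in every summand is a nonnegative integer, and $j \geq d$ (which holds since $d = m_s$ is the minimal $w$-exponent, by the analogue of Lemma \ref{lem 1 for main lem: case2}) guarantees the exponent of $c$ is a nonnegative integer; hence the braced factor is a convergent power series in $\mathsf z, c$ with value $1$ at the origin, so we may call it $1 + \eta(\mathsf z, c)$ with $\eta \to 0$. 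The inequality $r\gamma + sd \geq s\delta$ (equivalently $\gamma + ld \geq l\delta$) ensures the overall prefactor exponent $r\gamma + sd - s\delta$ is a nonnegative integer, so $\tilde q$ has no negative powers of $\mathsf z$ and is holomorphic; and it is clearly independent of the fiber coordinate issues, so $\tilde f$ is a skew product.

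Finally I would observe that $\tilde f(0) = 0$ and that both components of $\tilde f$ vanish to order $\geq 2$ at the origin — the first is $\mathsf z^\delta$ with $\delta \geq 2$, and the second is $\mathsf z^{r\gamma + sd - s\delta} c^d$ times a unit with $d \geq 2$ — so the origin is a superattracting fixed point; rigidity then follows because, as noted in the Case 2 discussion, $\tilde f$ is of the shape treated in Case 1, whose critical set is contained in $\{\mathsf z c = 0\}$ and is forward invariant. The only genuinely delicate point is the choice of the branch of $p(\mathsf z^r)^{1/r}$ defining $\tilde p$ and, correspondingly, of $p(\mathsf z)^{s/r}$ in the denominator of $\tilde q$: one must choose them compatibly so that the ratio is single-valued. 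Under the normalization $p(z) = z^\delta$ this is trivial, and I expect the general case to be an easy remark, so I do not anticipate a serious obstacle — the proposition is essentially a bookkeeping of exponents licensed by $s/r \in \mathcal{I}_f$.
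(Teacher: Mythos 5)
Your proposal is correct and follows essentially the same route as the paper: the paper's ``proof'' is precisely the displayed computation preceding the proposition, namely substituting $\pi_1(\mathsf{z},c)=(\mathsf{z}^r,\mathsf{z}^s c)$, factoring out the dominant monomial $\mathsf{z}^{r\gamma+sd-s\delta}c^d$, and invoking the cleared inequalities $ri+sj\geq r\gamma+sd\geq s\delta$ (together with $j\geq d$ from the minimality of the vertex $(\gamma,d)$) to see that all exponents are nonnegative integers. Your added remarks on the branch of the $r$-th root, the value $1$ at the origin, superattraction (with $d\geq 2$), and rigidity are exactly the points the paper leaves implicit, so there is nothing further to fix.
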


\subsection{Intervals and coverings for Case 3}

In the proof of Lemma \ref{main lemma} for Case 3,
the inequalities $\gamma + l_2 d \leq i + l_2 j$ and
$\gamma + l_2 d \leq l_2 \delta$ played a central role.
We define the interval $\mathcal{I}_f$ as
\[
\mathcal{I}_f = 
\left\{ \ l > 0 \ | 
\begin{array}{lcr}
\gamma + ld \leq i + lj
\text{ and }
\gamma + ld \leq l \delta  
\text{ for any $i$ and $j$ s.t. } b_{ij} \neq 0
\end{array}
\right\}. 
\]
It follows that $\max \mathcal{I}_f = l_2$.
In fact,
if $\gamma > 0$, then $\delta > d$ and 
\[
\mathcal{I}_f 
= 
\left[
\dfrac{\gamma}{\delta - d},
\min \left\{ \dfrac{i - \gamma}{d - j}  \right\}
\right]
=
\left[
\dfrac{\gamma}{\delta - d},
\frac{n_2 - \gamma}{d - m_2}
\right]
=
\left[
\dfrac{\gamma}{\delta - d},
l_2
\right],
\]
which is mapped to $[T_1, \delta]$
by the transformation $l \to l^{-1} \gamma + d$.
If $\gamma = 0$, then
the inequality $\gamma + ld \leq l \delta$ is trivial since $d \leq \delta$,
and so $\mathcal{I}_f = (0, l_2]$.

Let $U^l = \{ |z|^{l} < r^{l} |w|, |w| < r \}$.

\begin{prop} \label{} 
Lemma \ref{main lemma} in Case 3 holds on $U^l$ for any $l$ in $\mathcal{I}_f$.
\end{prop}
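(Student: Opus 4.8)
The plan is to reduce the statement on $U^l$ for an arbitrary $l \in \mathcal{I}_f$ to essentially the same computation already carried out in Section 3.1 for $l = l_2$. Recall that the proof of Lemma \ref{main lemma} for Case 3 used only two facts about the exponent $l = l_2^{-1}$ (in the notation there; here $l$ plays the role of $l_2$): that $\gamma + ld \leq i + lj$ for every $(i,j)$ with $b_{ij} \neq 0$, and that $\gamma + ld \leq l\delta$. Both inequalities are built into the definition of $\mathcal{I}_f$, so the argument goes through verbatim. I would therefore structure the proof as a short reprise, pointing to the earlier computation for the estimates and emphasizing only where the weaker hypothesis (a general $l$ rather than the extremal $l_2$) still suffices.

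First I would fix $l \in \mathcal{I}_f$, set $\eta(z,w) = (q(z,w) - z^{\gamma}w^d)/(z^{\gamma}w^d)$, and parametrize $U^l$ by writing $|z| = |t w^l|$, so that $U^l = \{|t| < r,\ 0 < |w| < r\} \subset \{|t| < r,\ |w| < r\}$, exactly as before but with $l_2^{-1}$ replaced by $l$. The computation
\[
|\eta| \leq \sum |b_{ij}|\, |t|^{\,i-\gamma}\, |w|^{(li+j)-(l\gamma+d)}
\]
is formally identical, and the two defining inequalities of $\mathcal{I}_f$ guarantee that every exponent is nonnegative, so this is a convergent power series in $|t|$ and $|w|$ on the polydisc of radius $r$. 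For each $(i,j) \neq (\gamma,d)$ at least one exponent is positive — this uses only the minimality of $(\gamma,d)$, namely $i \geq \gamma$ with $j > d$ when $i = \gamma$, which is a property of the Newton polygon independent of the choice of $l$ — hence for any $\varepsilon$ there is $r$ with $|\eta| < \varepsilon$ on $U^l$.

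For the invariance of $U^l$ I would again observe that $|q(z,w)| < r$ is automatic, so it suffices to bound $|p(z)|$ against $r|q(z,w)|^l$. Since $\gamma + ld \leq l\delta$ by membership in $\mathcal{I}_f$,
\[
\left| \frac{p(z)}{q(z,w)^l} \right|
\sim \left| \frac{z^{\delta}}{(z^{\gamma} w^d)^l} \right|
= |t|^{\delta - l\gamma}\, |w|^{l\{\delta - (l\gamma + d)\}}
\leq |t|^d < r^d
\]
on $U^l$ as $r \to 0$, using $\delta - l\gamma \geq d$ (equivalently $\gamma + ld \leq l\delta$ rearranged, keeping in mind $\delta > d$ when $\gamma > 0$) and $\delta - (l\gamma + d) \geq 0$. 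The hypothesis $d \geq 2$ then yields $|p(z)/q(z,w)^l| \leq C r^d \leq C r^2 < r$ for small $r$, completing the proof.

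I do not expect a genuine obstacle here: the content of the proposition is precisely that the estimates in Section 3.1 never used the extremality of $l_2$, only the two inequalities packaged into $\mathcal{I}_f$ together with the minimality of the vertex $(\gamma,d)$. The only point requiring a word of care is the verification that $\delta - l\gamma \geq d$ for every $l \in \mathcal{I}_f$ and not merely for $l = l_2$; this follows by rewriting $\gamma + ld \leq l\delta$, and one should note the degenerate subcase $\gamma = 0$ (where $\mathcal{I}_f = (0, l_2]$ and the inequality reads $ld \leq l\delta$, i.e.\ $d \leq \delta$, which holds). Beyond that bookkeeping the argument is a direct transcription.
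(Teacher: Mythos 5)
Your strategy is exactly the intended one, and it is the same as the paper's (which treats the proposition as immediate): the Case 3 proof of Lemma \ref{main lemma} uses only the minimality of the vertex ($i \ge \gamma$, with $j > d$ when $i=\gamma$) together with the two inequalities packaged into $\mathcal{I}_f$, so it applies to every $l\in\mathcal{I}_f$ and not merely to the endpoint $l_2$.

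However, your write-up contains a systematic confusion between $l$ and $l^{-1}$ which, read literally, breaks the displayed estimates. You state (correctly) that your $l$ plays the role of $l_2$, so the defining inequalities of $\mathcal{I}_f$ are $\gamma+ld\le i+lj$ and $\gamma+ld\le l\delta$; but you then transcribe the Section 3 computation by replacing $l_2^{-1}$ with $l$. The substitution $|z|=|tw^{l}|$ parametrizes $\{|z|<r|w|^{l},\,|w|<r\}=U^{1/l}$, not $U^{l}=\{|z|^{l}<r^{l}|w|,\,|w|<r\}=\{|z|<r|w|^{1/l},\,|w|<r\}$; the exponent $(li+j)-(l\gamma+d)$ is \emph{not} guaranteed nonnegative by membership of $l$ in $\mathcal{I}_f$ (take $(\gamma,d)=(0,3)$, $(n_2,m_2)=(1,1)$, $\delta\ge T_1=3$, so $l_2=1/2$; for $l=1/2$ and $(i,j)=(1,1)$ one has $\gamma+ld\le i+lj$ with equality, yet $(li+j)-(l\gamma+d)=-3/2$); and $\gamma+ld\le l\delta$ rearranges to $\delta-l^{-1}\gamma\ge d$, not to $\delta-l\gamma\ge d$ — these differ unless $l=1$ or $\gamma=0$. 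Likewise, invariance of $U^{l}$ requires $|p|<r|q|^{1/l}$, not $|p|<r|q|^{l}$. The repair is mechanical: put $|z|=|tw^{1/l}|$, so the $|w|$-exponent becomes $l^{-1}\{(i+lj)-(\gamma+ld)\}\ge 0$, and in $|p/q^{1/l}|$ the $|t|$-exponent is $\delta-l^{-1}\gamma\ge d$ and the $|w|$-exponent is $l^{-1}\{\delta-(l^{-1}\gamma+d)\}\ge 0$; with these replacements your argument (including the remark on the subcase $\gamma=0$) is the paper's Case 3 computation verbatim.
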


\begin{rem} \label{} 
It follows
that $U^{l_2}$ is the largest region among $U^l$ for any $l$ in $\mathcal{I}_f$, and
that $\mathcal{I}_f \neq \emptyset$ if and only if $T_1 \leq \delta$.
\end{rem}


Let $\pi_2 (t, \mathsf{w}) = (t \mathsf{w}^r, \mathsf{w}^s)$ and $\tilde{f} = \pi_2^{-1} \circ f \circ \pi_2$,
where $s/r = l \in \mathcal{I}_f \cap \mathbb{Q}$.
Then, formally,
$\pi_2$ is the composition of $(t,\mathsf{w}) \to (t,\mathsf{w}^s)$ and $(t,w) \to (tw^{r/s},w)$, and
\[
\tilde{f} (t, \mathsf{w}) =
\left( \dfrac{p(t \mathsf{w}^r)}{q(t \mathsf{w}^r, \mathsf{w}^s)^{r/s}}, \ q(t \mathsf{w}^r, \mathsf{w}^s)^{1/s} \right).
\]
Since $q(z,w) \sim z^{\gamma} w^d$ on $U^l$,
it follows formally that 
\[
q(t \mathsf{w}^r, \mathsf{w}^s)^{1/s} 
\sim \{ (t \mathsf{w}^r)^{\gamma} (\mathsf{w}^s)^d \}^{1/s} = (t \mathsf{w}^r)^{\gamma /s} \mathsf{w}^d
\text{ on } \pi_2^{-1} (U^l).
\]
Hence, 
if $\gamma /s$ is integer,
then $\tilde{f}$ is well-defined on $\pi_2^{-1} (U^l) = \{ |t| < r, 0 < |w| < r \}$.

\begin{prop} \label{} 
If $s/r \in \mathcal{I}_f$ and $\gamma /s \in \mathbb{N}$, 
then $\tilde{f}$ is well-defined, holomorphic and rigid
on a neighborhood of the origin.
More precisely,
\[
\tilde{f} (t, \mathsf{w}) = 
\left( t^{\delta - \frac{r}{s} \gamma} \mathsf{w}^{\frac{r}{s} \{ s \delta - (r \gamma + sd) \}} \{ 1 + \zeta (t, \mathsf{w}) \},
\ t^{\frac{\gamma}{s}} \mathsf{w}^{\frac{r}{s} \gamma + d} \{ 1 + \eta (t, \mathsf{w}) \} \right),
\]
where $\zeta$, $\eta \to 0$ as $t$, $\mathsf{w} \to 0$, 
and it has a superattracting fixed point at the origin.
\end{prop}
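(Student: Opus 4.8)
The plan is to prove this last Proposition by directly verifying the three claimed properties of $\tilde{f}(t,\mathsf{w})$ --- well-definedness/holomorphy, the explicit formula, and the superattracting property at the origin --- using the same strategy that worked in Section 3.2, but now accounting for the fractional substitution $\mathsf{w}^s$ in place of $w$ and the accompanying extraction of an $s$-th root. First I would write $\pi_2(t,\mathsf{w}) = (t\mathsf{w}^r, \mathsf{w}^s)$ as the formal composition $(t,\mathsf{w}) \mapsto (t,\mathsf{w}^s)$ followed by the (possibly fractional) blow-up $(t,w) \mapsto (tw^{r/s},w)$, exactly as indicated in the excerpt, so that $\tilde{q}(t,\mathsf{w}) = q(t\mathsf{w}^r,\mathsf{w}^s)^{1/s}$ and $\tilde{p}(t,\mathsf{w}) = p(t\mathsf{w}^r)/q(t\mathsf{w}^r,\mathsf{w}^s)^{r/s}$.

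The key computation is to expand $q(t\mathsf{w}^r,\mathsf{w}^s)$ using $q(z,w) = z^\gamma w^d + \sum b_{ij}z^i w^j$: substituting gives
\[
q(t\mathsf{w}^r,\mathsf{w}^s) = t^\gamma \mathsf{w}^{r\gamma + sd} + \sum b_{ij} t^i \mathsf{w}^{ri+sj} = t^\gamma \mathsf{w}^{r\gamma+sd}\Bigl\{ 1 + \sum b_{ij} t^{i-\gamma} \mathsf{w}^{(ri+sj)-(r\gamma+sd)} \Bigr\}.
\]
Here the condition $s/r \in \mathcal{I}_f$ supplies $\gamma + ld \le i+lj$, i.e. $r\gamma + sd \le ri + sj$, together with $i \ge \gamma$ from the minimality of $(\gamma,d) = (n_1,m_1)$ in Case 3; so the bracketed factor is $1 + \eta(t,\mathsf{w})$ with $\eta$ a convergent power series in $t,\mathsf{w}$ vanishing at the origin (by the same argument as in Lemma \ref{lem2 for main lem: case3} and the proof of Lemma \ref{main lemma} for Case 3). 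Taking the $s$-th root of $t^\gamma\mathsf{w}^{r\gamma+sd}(1+\eta)$ yields $t^{\gamma/s}\mathsf{w}^{(r\gamma+sd)/s}(1+\eta)^{1/s}$; the hypothesis $\gamma/s \in \mathbb{N}$ is exactly what makes this a genuine holomorphic function of $t$ (the exponent of $\mathsf{w}$, namely $r\gamma/s + d$, is automatically a nonnegative integer once $r\gamma/s$ is), and $(1+\eta)^{1/s}$ is holomorphic near the origin since $\eta \to 0$. This gives the second component $t^{\gamma/s}\mathsf{w}^{r\gamma/s+d}\{1+\eta(t,\mathsf{w})\}$ after renaming the power-series remainder. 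For the first component I would substitute $p(z) = z^\delta$ (as arranged at the start of Section 3) to get $p(t\mathsf{w}^r) = t^\delta \mathsf{w}^{r\delta}$, and divide by $q(t\mathsf{w}^r,\mathsf{w}^s)^{r/s} = t^{r\gamma/s}\mathsf{w}^{r(r\gamma+sd)/s}(1+\eta)^{r/s}$, obtaining $t^{\delta - r\gamma/s}\mathsf{w}^{r\delta - r(r\gamma+sd)/s}(1+\eta)^{-r/s} = t^{\delta - r\gamma/s}\mathsf{w}^{(r/s)\{s\delta - (r\gamma+sd)\}}\{1+\zeta(t,\mathsf{w})\}$, matching the stated formula; holomorphy of this component follows because the inequality $\gamma + ld \le l\delta$ from $\mathcal{I}_f$ gives $r\gamma + sd \le s\delta$, so the $\mathsf{w}$-exponent is a nonnegative (rational, but see below) number, while $\delta - r\gamma/s = \delta - \gamma \cdot (r/s) \ge \delta - \gamma/(\,\delta-d\,)^{-1}\!\cdot\!\ldots$ --- more cleanly, $\delta - r\gamma/s \ge d$ since $l^{-1}\gamma + d \le \delta$ with $l = s/r$ rearranges to $\delta - (r/s)\gamma \ge d \ge 2$. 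Finally, the superattracting property at the origin follows from $\delta - r\gamma/s \ge d \ge 2$ and $r\gamma/s + d \ge d \ge 2$, so both components vanish to order at least $2$ and $D\tilde{f}(0) = 0$.

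The main obstacle I anticipate is bookkeeping of integrality: showing that \emph{all} the exponents appearing in $\tilde{f}$ are nonnegative integers, not merely nonnegative rationals. The $t$-exponents $\gamma/s$ and $\delta - r\gamma/s$ are integers precisely when $\gamma/s \in \mathbb{N}$ (the stated hypothesis) --- for the second, $\delta - r\gamma/s = \delta - r(\gamma/s)$ is then an integer since $\delta, r \in \mathbb{N}$. The $\mathsf{w}$-exponents $r\gamma/s + d$ and $(r/s)\{s\delta-(r\gamma+sd)\} = r\delta - r^2\gamma/s - rd$ must also be checked: the first is $r(\gamma/s)+d \in \mathbb{N}$, and the second equals $r\delta - rd - r(r\gamma/s) \in \mathbb{N}$, all consequences of $\gamma/s \in \mathbb{N}$ and $r \in \mathbb{N}$. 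I would also need to confirm that the power series $\eta$ and $\zeta$ genuinely converge on a full neighborhood $\{|t| < r, |\mathsf{w}| < r\}$ of the origin --- this is inherited from Lemma \ref{main lemma} for Case 3 via $\pi_2^{-1}(U^l) = \{|t|<r, 0<|\mathsf{w}|<r\}$ together with the fact that, all exponents being nonnegative, the apparent singularity along $\{\mathsf{w}=0\}$ is removable --- so that $\tilde{f}$ extends holomorphically across $\{\mathsf{w}=0\}$ and is rigid there. Apart from this integrality accounting, the proof is a transcription of the Case 3 argument and I would present it as such, remarking that the details parallel Section 3.2.
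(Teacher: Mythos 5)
Your proposal is correct and follows essentially the same route as the paper, which establishes this proposition by exactly the formal computation you reproduce: expand $q(t\mathsf{w}^r,\mathsf{w}^s)$, factor out $t^{\gamma}\mathsf{w}^{r\gamma+sd}$ using the inequalities $i\geq\gamma$ and $ri+sj\geq r\gamma+sd\ (\geq s\delta$ reversed for the first component$)$ coming from the definition of $\mathcal{I}_f$, and take the $s$-th root, with $\gamma/s\in\mathbb{N}$ guaranteeing single-valuedness. Your additional bookkeeping (integrality of all exponents, convergence of $\eta$, $\zeta$ on a full polydisk, and the branch choice for $(1+\eta)^{1/s}$) just makes explicit what the paper leaves implicit, so no gap.
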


\begin{cor} \label{branched coverings: case3}
If $\gamma = 0$, 
then $\tilde{f}$ is well-defined for any $s/r$ in $\mathcal{I}_f$.
If $\gamma > 0$, 
then $\tilde{f}$ is well-defined at least for $\gamma /(\delta - d)$.
\end{cor}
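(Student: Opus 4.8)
The plan is to read off both assertions directly from the Proposition immediately preceding the Corollary, whose only hypothesis beyond $s/r \in \mathcal{I}_f$ is the divisibility condition $\gamma/s \in \mathbb{N}$. So everything reduces to exhibiting, in each of the two cases, a rational weight in $\mathcal{I}_f$ that satisfies this condition.

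For the case $\gamma = 0$, I would simply observe that $\gamma/s = 0$ for every positive integer $s$, so the divisibility hypothesis of the Proposition holds automatically; equivalently, the factor $t^{\gamma/s}$ appearing in the formula for $\tilde{f}$ is just the constant $1$, and no branch of a root of $t$ is involved. Since in this subcase $\mathcal{I}_f = (0, l_2]$, the Proposition applies to every rational $l = s/r$ in $\mathcal{I}_f$, which is the first statement.

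For $\gamma > 0$, I would take the left endpoint $l = \gamma/(\delta - d) = \min \mathcal{I}_f$. This choice is legitimate because $\delta > d$ when $\gamma > 0$ (as recorded in the computation of $\mathcal{I}_f$ above) and because $\mathcal{I}_f$ is nonempty under the standing Case 3 hypothesis $T_1 \leq \delta$. Writing $l$ in lowest terms as $s/r$ and setting $g = \gcd(\gamma, \delta - d)$, one has $s = \gamma/g$ and $r = (\delta - d)/g$, hence $\gamma/s = g \in \mathbb{N}$. Thus the Proposition applies with this $s/r$, and this weight is exactly $\gamma/(\delta - d)$, which gives the second statement.

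There is essentially no obstacle here: the Corollary is pure bookkeeping on top of the Proposition. The only things to check are that the chosen weight lies in the nonempty interval $\mathcal{I}_f$ and that clearing the common factor in $\gamma/(\delta - d)$ leaves $\gamma/s$ an integer, both of which are immediate from the explicit description of $\mathcal{I}_f$ and elementary divisibility.
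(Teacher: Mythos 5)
Your proposal is correct and is essentially the paper's own (implicit) argument: the Corollary is just the Proposition plus a check of the divisibility hypothesis, with $\gamma/s=0\in\mathbb{N}$ automatic when $\gamma=0$, and with the left endpoint $\gamma/(\delta-d)\in\mathcal{I}_f$ giving $\gamma/s\in\mathbb{N}$ when $\gamma>0$ (the paper uses exactly this device in its proof of the analogous Corollary for Case 4, where choosing $s_1/r_1=\gamma/(\delta-d)$ forces $\tilde{\gamma}/s_2=0\in\mathbb{N}$). Your lowest-terms choice giving $\gamma/s=\gcd(\gamma,\delta-d)$ is fine; one could equally take $s=\gamma$, $r=\delta-d$ so that $\gamma/s=1$.
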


\subsection{Intervals and coverings for Case 4}

We define the interval $\mathcal{I}_f^1$ as
\[
\mathcal{I}_f^1 = 
\left\{ \ l_{(1)} > 0 \ \Bigg| 
\begin{array}{lcr}
\gamma + l_{(1)} d \leq n_{j} + l_{(1)} m_{j} \text{ for } j \leq k-1 \\ 
\gamma + l_{(1)} d  <   n_{j} + l_{(1)} m_{j} \text{ for } j \geq k+1 \\ 
l_{(1)} \delta \leq \gamma + l_{(1)} d 
\end{array}
\right\},
\]
the interval $\mathcal{I}_f^2$ associated with $l_{(1)}$ in $\mathcal{I}_f^1$ as
\[
\mathcal{I}_f^2  = \mathcal{I}_f^2 (l_{(1)}) = 
\left\{ \ l_{(2)} > 0 \ \Big| 
\begin{array}{lcr}
\tilde{\gamma} + l_{(2)} d \leq \tilde{i} + l_{(2)} j
\text{ and }
\tilde{\gamma} + l_{(2)} d \leq l_{(2)} \delta  \\
\text{ for any $i$ and $j$ s.t. } b_{ij} \neq 0
\end{array}
\right\},
\]
where 
$\tilde{\gamma} = \gamma + l_{(1)} d - l_{(1)} \delta$
and $\tilde{i} = i + l_{(1)} j - l_{(1)} \delta$,
and the rectangle $\mathcal{I}_f$ as
\[
\mathcal{I}_f =
\{ (l_{(1)}, l_{(1)} + l_{(2)}) \ | \ l_{(1)} \in \mathcal{I}_f^1, l_{(2)} \in \mathcal{I}_f^2 \}.
\]

Let us calculate the intervals and rectangle more practically.
Let 
\[
\alpha_0 = \frac{\gamma}{\delta - d}.
\]
Then $\alpha_0 > 0$ since $\delta > d$ and $\gamma > 0$ by the setting.
Since $n_j < \gamma$ and $m_j > d$ for any $j \leq k-1$,
and $n_j > \gamma$ and $m_j < d$ for any $j \geq k+1$,  
\[
\mathcal{I}_f^1 = 
\left[
\max_{j \leq k-1}
\left\{ 
\dfrac{\gamma - n_j}{m_j - d} 
\right\},
\min_{j \geq k+1}
\left\{ 
\dfrac{n_j - \gamma}{d - m_j} 
\right\}
\right)
\cap
\left(
0,
\dfrac{\gamma}{\delta - d}
\right]
\]
\[
= 
\left[
\dfrac{\gamma - n_{k-1}}{m_{k-1} - d},
\dfrac{n_{k+1} - \gamma}{d - m_{k+1}} 
\right)
\cap
\left(
0,
\dfrac{\gamma}{\delta - d}
\right]
= [ l_1, l_1 + l_2 ) \cap ( 0, \alpha_0 ].
\]
In particular,
$\min \mathcal{I}_f^1 = l_1$ and, as a remark,
\[
\mathcal{I}_f^1 = 
\left\{ \ l_{(1)} > 0 \ \Bigg| 
\begin{array}{lcr}
\gamma + l_{(1)} d \leq n_{k-1} + l_{(1)} m_{k-1} \\
\gamma + l_{(1)} d < n_{k+1} + l_{(1)} m_{k+1} \\
l_{(1)} \delta \leq \gamma + l_{(1)} d 
\end{array}
\right\}.
\]
On the other hand,
\[
\mathcal{I}_f^2 
=
\left[
\dfrac{\tilde{\gamma}}{\delta - d},
\dfrac{\tilde{n}_{k+1} - \tilde{\gamma}}{d - m_{k+1}}  
\right]
=
\left[
\dfrac{\gamma}{\delta - d} - l_{(1)},
\dfrac{n_{k+1} - \gamma}{d - m_{k+1}} - l_{(1)}  
\right]
\]
\[
= [ \alpha_0 - l_{(1)}, l_1 + l_2 - l_{(1)} ].
\]
If $T_k < \delta = T_{k-1}$, 
then it follows from the inequality $l_1 = \alpha_0 < l_1 + l_2$ that
\[
\mathcal{I}_f^1 = \{ l_1 \}, \ 
\mathcal{I}_f^2 = [ \alpha_0 - l_{(1)}, l_1 + l_2 - l_{(1)} ]
\text{ and so }
\mathcal{I}_f 
= \{ l_1 \} \times [ l_1, l_1 + l_2 ].
\]
If $T_k < \delta < T_{k-1}$, 
then it follows from the inequality $l_1 < \alpha_0 < l_1 + l_2$ that
\[
\mathcal{I}_f^1 = [ l_1, \alpha_0 ], \ 
\mathcal{I}_f^2 = [ \alpha_0 - l_{(1)}, l_1 + l_2 - l_{(1)} ]
\text{ and so }
\mathcal{I}_f 
= [ l_1, \alpha_0 ] \times [ \alpha_0, l_1 + l_2 ].
\]
If $T_{k} = \delta < T_{k-1}$, then it follows from the inequality $l_1 < \alpha_0 = l_1 + l_2$ that
\[
\mathcal{I}_f^1 = [ l_1, l_1 + l_2 ), \ 
\mathcal{I}_f^2 = \{ l_1 + l_2 - l_{(1)} \}
\text{ and so }
\mathcal{I}_f 
= [ l_1, l_1 + l_2 ) \times \{ l_1 + l_2 \}.
\]
In particular, 
$\min \mathcal{I}_f^1 = l_1$ and
$\max \{ l_{(1)} + l_{(2)} \ | \ l_{(1)} \in \mathcal{I}_f^1, l_{(2)} \in \mathcal{I}_f^2 \} = l_1 + l_2$.

Let $U^{l_{(1)},l_{(2)}} = \{ |z|^{l_{(1)} + l_{(2)}} < r^{l_{(2)}} |w|, |w| < r|z|^{l_{(1)}} \}$.

\begin{prop} \label{} 
Lemma \ref{main lemma} in Case 4 holds on $U^{l_{(1)},l_{(2)}}$
for any $l_{(1)}$ in $\mathcal{I}_f^1$ and $l_{(2)}$ in $\mathcal{I}_f^2$.
\end{prop}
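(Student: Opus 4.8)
The plan is to reduce the statement to the already-proved instances of Lemma~\ref{main lemma} for Case~4 by observing that the region $U^{l_{(1)},l_{(2)}}$ is precisely the region one gets by running the two-stage blow-up argument of Section~4.1 with the weights $l_{(1)}$ and $l_{(2)}$ in place of $l_1$ and $l_2$. Concretely, I would set $|w| = |z^{l_{(1)}} c|$ and $|z| = |t c^{l_{(2)}^{-1}}|$, exactly as in the proof of Lemma~\ref{main lemma} for Case~4, but now with the general weights. Under these substitutions one checks that
\[
U^{l_{(1)},l_{(2)}} = \{\, 0 < |z| < r|c|^{l_{(2)}^{-1}},\ 0 < |c| < r \,\} = \{\, 0 < |t| < r,\ 0 < |c| < r \,\} \subset \{\, |t| < r,\ |c| < r \,\},
\]
so the coordinate change is formally identical to the one in Section~4.2; only the exponents change.

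The key point is then that all the exponent inequalities used in that proof remain valid when $l_1,l_2$ are replaced by $l_{(1)}\in\mathcal{I}_f^1$, $l_{(2)}\in\mathcal{I}_f^2$. First I would record, directly from the definition of $\mathcal{I}_f^1$, that $l_{(1)}\delta \le \gamma + l_{(1)} d \le i + l_{(1)} j$ for every $(i,j)$ with $b_{ij}\neq 0$ (the first inequality is the third defining condition of $\mathcal{I}_f^1$; the second follows from the first two defining conditions together with convexity of the Newton polygon, exactly as the vertex inequalities give it for the extreme weight $l_1$). Setting $\tilde\gamma = \gamma + l_{(1)} d - l_{(1)}\delta$ and $\tilde i = i + l_{(1)} j - l_{(1)}\delta$, this says $0\le\tilde\gamma\le\tilde i$. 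Next, from the definition of $\mathcal{I}_f^2 = \mathcal{I}_f^2(l_{(1)})$ I would read off $\tilde\gamma + l_{(2)} d \le \tilde i + l_{(2)} j$ and $\tilde\gamma + l_{(2)} d \le l_{(2)}\delta$; writing $\tilde d = l_{(2)}^{-1}\tilde\gamma + d$ and $\tilde j = l_{(2)}^{-1}\tilde i + j$ this becomes $\tilde d\le\tilde j$ and $\tilde d\le\delta$. Together these give the analogue of Corollary~\ref{cor for second blow-up}: $0\le\tilde\gamma\le\tilde i$ and $d\le\tilde d\le\tilde j$ for all $(i,j)$ with $b_{ij}\neq 0$, and moreover strict inequality $\tilde i > \tilde\gamma$ or $\tilde j > \tilde d$ holds for each $(i,j)\neq(\gamma,d)$ (from the strictness built into the $j\ge k+1$ conditions and the vertex structure).

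With these inequalities in hand, the remainder is a verbatim repetition of the proof of Lemma~\ref{main lemma} for Case~4. For the estimate on $\eta(z,w) = (q(z,w) - z^\gamma w^d)/z^\gamma w^d$, the same chain of substitutions yields $|\eta| \le \sum |b_{ij}|\,|t|^{\tilde i - \tilde\gamma}\,|c|^{\tilde j - \tilde d}$, which is a convergent power series in $|t|,|c|$ vanishing at the origin because $\tilde i\ge\tilde\gamma$, $\tilde j\ge\tilde d$ and at least one is strict; hence $\|f - f_0\| < \varepsilon\|f_0\|$ on $U^{l_{(1)},l_{(2)}}_r$ for suitable $r$. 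For invariance, the same two computations as in Section~4.2 give $|p(z)^{1+l_{(1)} l_{(2)}^{-1}} / q(z,w)^{l_{(2)}^{-1}}| \sim |t|^{\delta - l_{(2)}^{-1}\tilde\gamma}|c|^{l_{(2)}^{-1}(\delta - \tilde d)} \le |t|^d < r^d$ (using $\delta \ge \tilde d = l_{(2)}^{-1}\tilde\gamma + d$), and $|q(z,w)/p(z)^{l_{(1)}}| \sim |t|^{\tilde\gamma}|c|^{\tilde d} \le |c|^d < r^d$ (using $\tilde\gamma\ge 0$, $\tilde d\ge d$); since $d\ge 2$, these are $< r$ for small $r$, which is exactly $f(U^{l_{(1)},l_{(2)}}_r) \subset U^{l_{(1)},l_{(2)}}_r$. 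I expect the only real content beyond bookkeeping to be the derivation of the ``middle'' inequality $\gamma + l_{(1)} d \le i + l_{(1)} j$ and of $\tilde\gamma + l_{(2)} d \le \tilde i + l_{(2)} j$ for \emph{all} monomials (not just vertices) from the interval definitions together with Newton-polygon convexity; once that is dispatched, everything else is the already-written Case~4 argument with renamed weights.
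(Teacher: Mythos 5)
Your proposal is correct and is essentially the paper's intended argument: the Case 4 proof in Section 4.2 uses only the inequalities $l_{(1)}\delta \leq \gamma + l_{(1)}d \leq i + l_{(1)}j$ and $\tilde{\gamma} + l_{(2)}d \leq \min\{\tilde{i} + l_{(2)}j,\ l_{(2)}\delta\}$, which you correctly extract from the definitions of $\mathcal{I}_f^1$ and $\mathcal{I}_f^2$ (with the middle inequality for all monomials obtained from the vertex conditions by convexity of $N(q)$, and strictness of $\tilde{i}>\tilde{\gamma}$ or $\tilde{j}>\tilde{d}$ for $(i,j)\neq(\gamma,d)$ following since equality in both forces $(i,j)=(\gamma,d)$), after which the substitutions $|w|=|z^{l_{(1)}}c|$, $|z|=|tc^{l_{(2)}^{-1}}|$ reproduce the estimate for $\eta$ and the two invariance estimates verbatim.
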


\begin{rem} \label{} 
It follows that 
$U^{l_1, l_2}$ is the largest region among $U^{l_{(1)},l_{(2)}}$ 
for any $l_{(1)}$ in $\mathcal{I}_f^1$ and $l_{(2)}$ in $\mathcal{I}_f^2$,
and that
$\mathcal{I}_f^1 \neq \emptyset$ and $\mathcal{I}_f^2 \neq \emptyset$
if and only if $T_{k} \leq \delta \leq T_{k-1}$.
More precisely,
$\mathcal{I}_f^1 = \emptyset$ if $T_{k-1} < \delta$, and
$\mathcal{I}_f^2 = \emptyset$ if $\delta < T_k$.
\end{rem}


Let $\pi_1 (\mathsf{z},c) = (\mathsf{z}^{r_1}, \mathsf{z}^{s_1} c)$
and $\tilde{f}_1 = \pi_1^{-1} \circ f \circ \pi_1$.
Then 
\[
\tilde{f}_1 (\mathsf{z},c) 
= \left( \mathsf{z}^{\delta}, \mathsf{z}^{r_1 \gamma + s_1 d - s_1 \delta} c^d 
+ \sum b_{ij} \mathsf{z}^{r_1 i + s_1 j - s_1 \delta} c^{j} \right)
\]
\[
= \left( \mathsf{z}^{\delta}, \mathsf{z}^{\tilde{\gamma}} c^d 
+ \sum b_{ij} \mathsf{z}^{\tilde{i}} c^{j} \right),
\]
where $\tilde{\gamma} = r_1 \gamma + s_1 d - s_1 \delta$
and $\tilde{i} = r_1 i + s_1 j - s_1 \delta$.

\begin{prop} \label{} 
For any rational number $s_1/r_1$ in $\mathcal{I}_f^1$, 
the lift $\tilde{f}_1$ is well-defined, holomorphic, skew product and rigid
on the preimage of the domain of $f$ by $\pi_1$.
More precisely,
\[
\tilde{f}_1 (\mathsf{z},c) = \left( \mathsf{z}^{\delta}, \ 
\mathsf{z}^{r_1 \gamma + s_1 d - s_1 \delta} c^d \left\{ 1 + \eta (\mathsf{z},c) \right\}  \right),
\]
where $\eta \to 0$ as $\mathsf{z}$, $c \to 0$, and 
it has a superattracting fixed point at the origin.
\end{prop}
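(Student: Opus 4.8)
The plan is to mimic the blow-up construction of Sections 2 and 4 but allow the weight $l = s_1/r_1$ to be rational, exactly as was done for Case 2 in Proposition \ref{branched coverings: case2}. First I would write $\pi_1(\mathsf{z},c) = (\mathsf{z}^{r_1}, \mathsf{z}^{s_1} c)$ as the formal composition of the ramified cover $(\mathsf{z},c) \mapsto (\mathsf{z}^{r_1}, c)$ and the (fractional) blow-up $(z,c) \mapsto (z, z^{s_1/r_1} c)$, and compute $\tilde{f}_1 = \pi_1^{-1} \circ f \circ \pi_1$ coordinate-wise. The first component gives $\tilde{p}_1(\mathsf{z}) = p(\mathsf{z}^{r_1})^{1/r_1} = (\mathsf{z}^{r_1 \delta})^{1/r_1} = \mathsf{z}^{\delta}$, which is single-valued because $r_1 \mid r_1\delta$; here I would note (as in Section 2) that after the preliminary B\"ottcher reduction we may take $p(z) = z^{\delta}$, so no genuine $r_1$-th root issue arises. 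The second component gives
\[
\tilde{q}_1(\mathsf{z},c) = \frac{q(\mathsf{z}^{r_1}, \mathsf{z}^{s_1} c)}{p(\mathsf{z})^{s_1}}
= \mathsf{z}^{r_1 \gamma + s_1 d - s_1 \delta} c^d + \sum_{(i,j)\neq(\gamma,d)} b_{ij}\, \mathsf{z}^{r_1 i + s_1 j - s_1 \delta} c^{j},
\]
which is exactly the asserted expression with $\tilde{\gamma} = r_1\gamma + s_1 d - s_1\delta$ and $\tilde{i} = r_1 i + s_1 j - s_1\delta$.

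Next I would verify holomorphy: I must show all exponents appearing are nonnegative integers. The $c$-exponents $j$ and $d$ are automatically nonnegative integers. For the $\mathsf{z}$-exponents, the defining inequalities of $\mathcal{I}_f^1$ are precisely what is needed. The condition $l_{(1)}\delta \le \gamma + l_{(1)} d$ (with $l_{(1)} = s_1/r_1$) multiplied through by $r_1$ gives $s_1 \delta \le r_1 \gamma + s_1 d$, i.e. $\tilde{\gamma} \ge 0$. The conditions $\gamma + l_{(1)} d \le n_j + l_{(1)} m_j$ for all vertices (equivalently, as noted in the excerpt, for $j = k-1$ and, with the strict version, $j = k+1$, which covers all $(i,j)$ with $b_{ij}\neq 0$ by convexity of the Newton polygon) give $r_1 \gamma + s_1 d \le r_1 i + s_1 j$, i.e. $\tilde{\gamma} \le \tilde{i}$. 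Hence every $\mathsf{z}$-exponent $\tilde{i} = \tilde{\gamma} + (\tilde i - \tilde\gamma) \ge \tilde{\gamma} \ge 0$ is a nonnegative integer (it is an integer since $r_1, s_1, i, j, \delta$ are). Factoring out $\mathsf{z}^{\tilde{\gamma}} c^d$ yields $\tilde{q}_1 = \mathsf{z}^{\tilde{\gamma}} c^d \{1 + \eta(\mathsf{z},c)\}$ with $\eta = \sum b_{ij} \mathsf{z}^{\tilde i - \tilde\gamma} c^{j-d}$, and since for each $(i,j)\neq(\gamma,d)$ at least one of $\tilde i - \tilde\gamma \ge 1$ or $j - d \ge 1$ holds (this is the minimality of $(\tilde\gamma, d)$, argued exactly as in Lemma \ref{inequalities for first blow-up}), $\eta$ is a convergent power series vanishing at the origin. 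This also shows $\tilde{f}_1$ is skew product (the first coordinate depends only on $\mathsf{z}$) and that it extends holomorphically to the full preimage $\pi_1^{-1}(\text{dom} f)$, including the $c$-axis, just as in Section 2.2.

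Finally I would record that $\tilde{f}_1$ has a superattracting fixed point at the origin: $\tilde{f}_1(0,0) = (0,0)$, and since $\delta \ge 2$ and $d = m_k \ge 1$ with $\tilde\gamma \ge 0$, the relevant exponents force $D\tilde{f}_1(0) = 0$ — more carefully, one checks that the $2$-jet of $\tilde{f}_1$ at $0$ vanishes because $\delta \ge 2$ and $\tilde\gamma + d \ge d \ge 1$ together with $\tilde\gamma \ge 0$ make the monomial $\mathsf{z}^{\tilde\gamma} c^d$ have total degree $\ge 2$ unless $(\tilde\gamma, d)$ is small, which is handled exactly as in the corresponding propositions of Cases 2 and 3; rigidity then follows from Lemma \ref{main lemma} for Case 4 (proved in Section 4.2) transported through $\pi_1$, since the union of critical sets is contained in $\{\mathsf{z} c = 0\}$, a normal-crossings forward-invariant divisor. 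The argument is essentially a transcription of Section 2.2 with $l_1$ replaced by the rational weight $s_1/r_1 \in \mathcal{I}_f^1$; I expect no serious obstacle, the only point requiring a little care being the bookkeeping that confirms all the fractional exponents clear to integers, which is exactly guaranteed by the integrality of $r_1, s_1$ and the three inequalities defining $\mathcal{I}_f^1$.
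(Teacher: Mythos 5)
Your overall route is the paper's: compute $\tilde{f}_1=\pi_1^{-1}\circ f\circ\pi_1$ explicitly, note that with $p(z)=z^{\delta}$ the first component is single-valued, and deduce holomorphy of $\tilde{q}_1$ from the inequalities $r_1 i+s_1 j\ge r_1\gamma+s_1 d\ge s_1\delta$ obtained by clearing denominators in the defining inequalities of $\mathcal{I}_f^1$ (with convexity of $N(q)$ reducing the check to vertices). This is exactly the transcription of Proposition \ref{branched coverings: case2} that the paper intends here, so that part is fine.

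The gap is in your justification of the ``more precisely'' clause. You assert that $\eta=\sum b_{ij}\,\mathsf{z}^{\tilde{i}-\tilde{\gamma}}c^{\,j-d}$ is a convergent power series vanishing at the origin because ``at least one of $\tilde{i}-\tilde{\gamma}\ge 1$ or $j-d\ge 1$ holds''. That dichotomy belongs to Case 2, where $(\gamma,d)=(n_s,m_s)$ is the last vertex and hence $j\ge d$ for every term. In the present setting $(\gamma,d)=(n_k,m_k)$ with $2\le k\le s-1$ is an interior vertex of $N(q)$, so there are terms with $j<d$ (for instance the vertex $(n_{k+1},m_{k+1})$, where $m_{k+1}<d$); for these, $c^{\,j-d}$ has a negative exponent, $\eta$ is not a power series (it has poles along $\{c=0\}$), and a monomial $\mathsf{z}^{a}c^{-b}$ with $a,b>0$ does not tend to $0$ as $(\mathsf{z},c)\to(0,0)$ without further restriction. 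The correct minimality statement, from Lemma \ref{inequalities for first blow-up}, is ``$\tilde{i}>\tilde{\gamma}$, or $\tilde{i}=\tilde{\gamma}$ and $j>d$'', and the strict inequalities imposed for $j\ge k+1$ in the definition of $\mathcal{I}_f^1$ are precisely what excludes $\tilde{i}=\tilde{\gamma}$ with $j<d$ (compare the remark in Section 5.3: with the non-strict definition one could have $\tilde{\gamma}=\tilde{n}_{k+1}$ and the proposition fails). So your argument proves well-definedness, holomorphy, the skew-product form and the superattracting fixed point, but the smallness of $\eta$ cannot be obtained by a power-series argument: it has to be established on the wedge, where $|c|$ dominates the appropriate power of $|\mathsf{z}|$, exactly as in the proof of Lemma \ref{main lemma} for Case 4, or — as the paper does in Section 4.1.1 — by recording only the sum form for $\tilde{f}_1$, observing that it is a skew product in Case 3, and reserving a factored form with a genuinely vanishing error term for the second covering $\tilde{f}_2$.
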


\begin{rem}
If we define the interval $\mathcal{I}_f^1$ as
\[
\left\{ \ l_{(1)} > 0 \ | 
\begin{array}{lcr}
l_{(1)} \delta \leq \gamma + l_{(1)} d \leq i + l_{(1)} j
\text{ for any $i$ and $j$ s.t. } b_{ij} \neq 0
\end{array}
\right\},
\]
then we could have the equality $\tilde{\gamma} = \tilde{n}_{k+1}$
and the proposition above fails.
\end{rem}


Let $\pi_2 (t, \mathsf{c}) = (t \mathsf{c}^{r_2}, \mathsf{c}^{s_2})$ 
and $\tilde{f}_2 = \pi_2^{-1} \circ \tilde{f}_1 \circ \pi_2$.
Then, formally,
\[
\tilde{f}_2 (t, \mathsf{c}) =
\left( \dfrac{\tilde{p}_1 (t \mathsf{c}^{r_2})}{\tilde{q}_1 (t \mathsf{c}^{r_2}, \mathsf{c}^{s_2})^{{r_2}/{s_2}}}, \ 
\tilde{q}_1(t \mathsf{c}^{r_2}, \mathsf{c}^{s_2})^{1/{s_2}} \right).
\]

\begin{lem} \label{} 
If $s_1/r_1 \in \mathcal{I}_f^1$, $s_2/r_2 \in \mathcal{I}_f^2$
and $\tilde{\gamma} /{s_2} \in \mathbb{N}$,  
then $\tilde{f}_2$ is well-defined, holomorphic and rigid 
on a neighborhood of the origin. 
More precisely,
\[
\tilde{f}_2 (t, \mathsf{c}) = 
\left( t^{\delta - \frac{r_2}{s_2} \tilde{\gamma}} 
 \mathsf{c}^{\frac{r_2}{s_2} \{ s_2 \delta - (r_2 \tilde{\gamma} + s_2 d) \}} 
 \{ 1 + \zeta (t, \mathsf{c}) \},
\ t^{\frac{\tilde{\gamma}}{s_2}} 
 \mathsf{c}^{\frac{r_2}{s_2} \tilde{\gamma} + d} 
 \{ 1 + \eta (t, \mathsf{c}) \} \right),
\]
where $\zeta$, $\eta \to 0$ as $t$, $\mathsf{c} \to 0$,
and it has a superattracting fixed point at the origin.
\end{lem}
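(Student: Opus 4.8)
The plan is to combine the two branched coverings $\pi_1$ and $\pi_2$ exactly as in the integer case of Section 4.1, tracking carefully which exponents become integers once we impose $\tilde{\gamma}/s_2 \in \mathbb{N}$. First I would record the formal computation: writing $z = \mathsf{z}^{r_1}$, $c = \mathsf{z}^{s_1}c$ under $\pi_1$ and then $t = t$, $\mathsf{c}^{s_2} = c$, $\mathsf{c}^{r_2} = $ (the $t$-exponent weight) under $\pi_2$, we get
\[
\tilde{q}_2(t,\mathsf{c}) = \tilde{q}_1(t\mathsf{c}^{r_2},\mathsf{c}^{s_2})^{1/s_2}
= \left( t^{\tilde{\gamma}} \mathsf{c}^{r_2\tilde{\gamma} + s_2 d} \Bigl\{ 1 + \sum b_{ij}\, t^{\tilde{i}-\tilde{\gamma}}\, \mathsf{c}^{r_2(\tilde{i}-\tilde{\gamma}) + s_2(j-d)} \Bigr\} \right)^{1/s_2}.
\]
The exponents on $t$ and $\mathsf{c}$ inside the braces are nonnegative integers because $\tilde{i} \geq \tilde{\gamma}$ (Lemma on the first blow-up, translated to this setting, i.e.\ $\gamma + l_{(1)} d \le i + l_{(1)} j$) and because $s_2/r_2 = l_{(2)} \in \mathcal{I}_f^2$ gives $\tilde{\gamma} + l_{(2)} d \le \tilde{i} + l_{(2)} j$, hence $r_2(\tilde{i}-\tilde{\gamma}) + s_2(j-d) \ge 0$. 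So the braced factor is a convergent power series in $(t,\mathsf{c})$ equal to $1 + \eta(t,\mathsf{c})$ with $\eta \to 0$; since it is near $1$ it has a holomorphic $s_2$-th root $1 + \tilde\eta$, and $t^{\tilde{\gamma}/s_2}$ is a genuine monomial precisely under the hypothesis $\tilde{\gamma}/s_2 \in \mathbb{N}$, while $\mathsf{c}^{(r_2\tilde{\gamma}+s_2 d)/s_2} = \mathsf{c}^{r_2\tilde{\gamma}/s_2 + d}$ is a monomial because $r_2\tilde{\gamma}/s_2 = (r_2/s_2)(\tilde{\gamma}/s_2)\cdot s_2$ is an integer (it equals $l_{(2)}^{-1}$ times the integer $\tilde{\gamma}/s_2 \cdot s_2 / 1$ — more simply, $r_2 \mid r_2\tilde\gamma$ and $s_2 \mid s_2 d$, and one checks $s_2 \mid r_2\tilde\gamma$ using $\tilde\gamma/s_2 \in \mathbb{N}$). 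This yields the stated second component.

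For the first component, the formal expression is $\tilde{p}_1(t\mathsf{c}^{r_2})/\tilde{q}_1(t\mathsf{c}^{r_2},\mathsf{c}^{s_2})^{r_2/s_2}$; substituting $\tilde{p}_1(\mathsf{z}) = \mathsf{z}^\delta$ and the factored form of $\tilde{q}_1$ gives
\[
\frac{(t\mathsf{c}^{r_2})^\delta}{\bigl( t^{\tilde\gamma}\mathsf{c}^{r_2\tilde\gamma + s_2 d}\{1+\eta\}\bigr)^{r_2/s_2}}
= t^{\delta - \frac{r_2}{s_2}\tilde\gamma}\,\mathsf{c}^{r_2\delta - \frac{r_2}{s_2}(r_2\tilde\gamma + s_2 d)}\,\{1+\eta\}^{-r_2/s_2}.
\]
Here $r_2\delta - \frac{r_2}{s_2}(r_2\tilde\gamma + s_2 d) = \frac{r_2}{s_2}\{s_2\delta - (r_2\tilde\gamma + s_2 d)\}$, matching the statement. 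The $t$-exponent $\delta - \frac{r_2}{s_2}\tilde\gamma = \delta - l_2^{-1}\tilde\gamma = \delta - \tilde d + d$... more directly, $\delta - \frac{r_2}{s_2}\tilde\gamma \ge d$ follows from $\tilde\gamma + l_{(2)} d \le l_{(2)}\delta$ (the second defining inequality of $\mathcal{I}_f^2$), which rearranges to $\delta - l_{(2)}^{-1}\tilde\gamma \ge d \ge 2$; and the $\mathsf{c}$-exponent $\frac{r_2}{s_2}\{s_2\delta - (r_2\tilde\gamma + s_2 d)\}$ is $\ge 0$ by the same inequality. Both exponents are integers: $\delta - \frac{r_2}{s_2}\tilde\gamma$ is an integer because $s_2 \mid r_2\tilde\gamma$ (shown above), and the $\mathsf{c}$-exponent is an integer by the same divisibility. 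Since $\{1+\eta\}^{-r_2/s_2} = 1 + \zeta$ with $\zeta \to 0$ (again a holomorphic root near $1$), the first component has the claimed form, and both monomial exponents being $\ge d \ge 2$ — actually $\ge 1$ suffices for the $\mathsf{c}$-exponent of the first component and the $t$-exponent is $\ge 2$ — forces $D\tilde f_2(0) = 0$, i.e.\ the origin is superattracting.

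Finally I would assemble: $\tilde f_2 = \pi_2^{-1}\circ \tilde f_1 \circ \pi_2$ is well-defined and holomorphic on a neighborhood of the origin because every operation above (substitution, extracting the $s_2$-th root of a unit, multiplying by monomials with nonnegative integer exponents) stays within holomorphic functions on a polydisc $\{|t| < r, |\mathsf{c}| < r\}$; it is rigid because its critical set is contained in the coordinate axes together with finitely many curves, invariant under the map — this is inherited from the rigidity of $\tilde f_1$ (previous proposition) pulled back by the monomial map $\pi_2$, exactly as in Case 3. The main obstacle, and the only genuinely delicate point, is the bookkeeping of which combinations of $r_1, s_1, r_2, s_2$ and $\tilde\gamma, \tilde i$ are integers: the hypothesis $\tilde\gamma/s_2 \in \mathbb{N}$ must be shown to propagate to $s_2 \mid r_2\tilde\gamma$ and to $s_2 \mid (r_2\tilde\gamma + s_2 d)$ and to $s_2 \mid r_2\{s_2\delta - (r_2\tilde\gamma + s_2 d)\}$, so that after dividing by $s_2$ all the displayed exponents are honest nonnegative integers rather than mere rationals; once that is verified, everything else is a repetition of the integer-weight arguments of Sections 3 and 4 with $(\gamma, i)$ replaced by $(\tilde\gamma, \tilde i)$ and $(z,w)$ replaced by $(\mathsf{z}, c)$.
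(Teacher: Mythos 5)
Your proposal is correct and is essentially the paper's own argument: the paper records only the formal expression $\tilde{f}_2=\pi_2^{-1}\circ\tilde{f}_1\circ\pi_2$ before the lemma and leaves implicit exactly the details you supply --- factoring out the dominant monomial after substitution, deducing nonnegativity of the exponents from $s_1/r_1\in\mathcal{I}_f^1$ and $s_2/r_2\in\mathcal{I}_f^2$, using $\tilde{\gamma}/s_2\in\mathbb{N}$ for the divisibility of the monomial exponents, and extracting $s_2$-th roots of a unit near $1$, with superattraction and rigidity read off from the resulting perturbed monomial form as in Cases 2 and 3. One small caution, inherited from the paper's own notation: in $\tilde{q}_1$ the symbols $\tilde{\gamma},\tilde{i}$ are the scaled quantities $r_1\gamma+s_1d-s_1\delta$ and $r_1i+s_1j-s_1\delta$, whereas $\mathcal{I}_f^2$ is defined with $\gamma+l_{(1)}d-l_{(1)}\delta$; the inequalities you invoke still give the required nonnegative exponents (since $r_1\geq 1$), but you should keep the two normalizations distinct when writing them down.
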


\begin{cor} \label{branched coverings: case4}
Let $T_k < \delta \leq T_{k-1}$
and $s_1/r_1 = \gamma/(\delta - d)$.
Then $\tilde{f}_2$ is well-defined
for any $s_2/r_2$ in $\mathcal{I}_f^2$.
\end{cor}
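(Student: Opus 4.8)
The plan is to obtain this as a direct specialization of the preceding lemma, so the only work is to verify its hypotheses for the stated choice of weights. That lemma requires three things: $s_1/r_1 \in \mathcal{I}_f^1$, $s_2/r_2 \in \mathcal{I}_f^2$, and the divisibility $\tilde{\gamma}/s_2 \in \mathbb{N}$. The second is granted, so the whole point is that the extreme choice $s_1/r_1 = \gamma/(\delta - d) = \alpha_0$ both lies in $\mathcal{I}_f^1$ and forces $\tilde{\gamma} = 0$, which renders the divisibility condition vacuous. This is precisely the one-blow-up-higher analogue of Corollary \ref{branched coverings: case3}: there $\gamma = 0$ made $\gamma/s$ trivially integral, and here choosing the weight $\alpha_0$ for $\pi_1$ produces a lift $\tilde{f}_1$ whose ``$\gamma$'', namely $\tilde{\gamma}$, vanishes.

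First I would check that $\alpha_0$ is an admissible weight for the first covering, i.e.\ $\alpha_0 \in \mathcal{I}_f^1 \cap \mathbb{Q}$. Rationality is immediate since $\gamma$, $\delta$, $d$ are nonnegative integers with $\delta > d$. For membership, recall the explicit description $\mathcal{I}_f^1 = [l_1, l_1 + l_2) \cap (0, \alpha_0]$. Under the hypothesis $T_k < \delta \leq T_{k-1}$, the case analysis preceding the corollary gives $l_1 \leq \alpha_0 < l_1 + l_2$ (with $l_1 = \alpha_0$ when $\delta = T_{k-1}$ and $l_1 < \alpha_0$ when $\delta < T_{k-1}$), so $\alpha_0$ lies in both factors, hence in $\mathcal{I}_f^1$. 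This is exactly where the strict inequality $T_k < \delta$ in the hypothesis is used: if $\delta = T_k$ then $\alpha_0 = l_1 + l_2$ would sit on the open end of the interval and be excluded.

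Next I would compute $\tilde{\gamma}$ for this weight. Writing $s_1/r_1 = \alpha_0$ in lowest terms, the defining relation $s_1(\delta - d) = r_1 \gamma$ holds, so $\tilde{\gamma} = r_1 \gamma + s_1 d - s_1 \delta = r_1 \gamma - s_1(\delta - d) = 0$. Consequently $\tilde{\gamma}/s_2 = 0 \in \mathbb{N}$ for every choice of $s_2/r_2 \in \mathcal{I}_f^2$, in agreement with the convention already used in Corollary \ref{branched coverings: case3}. Applying the preceding lemma with the data $s_1/r_1 = \alpha_0 \in \mathcal{I}_f^1$, $s_2/r_2 \in \mathcal{I}_f^2$, $\tilde{\gamma}/s_2 = 0$, I would conclude that $\tilde{f}_2 = \pi_2^{-1} \circ \tilde{f}_1 \circ \pi_2$ is well-defined, holomorphic and rigid on a neighborhood of the origin, with the monomial-perturbation form of that lemma (in which the exponent $\tilde{\gamma}/s_2$ of $t$ in the second coordinate is $0$).

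As for the main obstacle: there is essentially no analytic difficulty here, since the corollary is a pure specialization of the lemma; the only point requiring care is the combinatorial bookkeeping that $\alpha_0$ genuinely belongs to $\mathcal{I}_f^1$, together with keeping track of the degenerate subcase $\delta = T_{k-1}$, where $\mathcal{I}_f^1 = \{ l_1 \} = \{ \alpha_0 \}$ collapses to a single point but the argument still goes through unchanged. I would present the proof in a few lines following exactly the three steps above.
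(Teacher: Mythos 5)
Your proposal is correct and follows essentially the same route as the paper: show $\alpha_0=\gamma/(\delta-d)\in\mathcal{I}_f^1$ (using $T_k<\delta$, so $\alpha_0<l_1+l_2$), observe that this choice of $s_1/r_1$ gives $\tilde{\gamma}=r_1\gamma-s_1(\delta-d)=0$, hence $\tilde{\gamma}/s_2=0\in\mathbb{N}$, and invoke the preceding lemma. In fact your inequality $\alpha_0<l_1+l_2$ is stated in the correct direction, whereas the paper's printed proof contains the reversed inequality $l_1+l_2<\alpha_0$ as an apparent typo before drawing the same conclusion $\mathcal{I}_f^1=[l_1,\alpha_0]$.
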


\begin{proof}
It follows from the condition $T_k < \delta$
that $l_1 + l_2 < \alpha_0$
and so $\mathcal{I}_f^1 = [l_1, \alpha_0]$.
In particular,
$\alpha_0 = \gamma/(\delta - d) \in \mathcal{I}_f^1$. 
Let $s_1/r_1 = \gamma/(\delta - d)$.
Then $\tilde{\gamma} = 0$ and so 
we obtain the condition $\tilde{\gamma} /s_2 = 0 \in \mathbb{N}$
in the previous lemma.
\end{proof}

\begin{rem}
Even if $\tilde{f}_2$ is well-defined,
the projection under $\pi_1 \circ \pi_2$
of a neighborhood of the origin
is usually smaller than the open set $U_r$ 
in the following sense:
\[
\pi_1 (\pi_2 (\{ 0 < |t| < r, 0 < |\mathsf{c}| < r \}))
= \pi_1 (\{ 0 < r^{- l_2} |\mathsf{z}|^{l_2} < |c| < r^{s_2} \})
\]
\[
= \{ r^{-l_2} |z|^{l_1 + l_2/r_1} < |w| < r^{s_2} |z|^{l_1} \}
\subset U_r = \{ r^{-l_2} |z|^{l_1 + l_2} < |w| < r|z|^{l_1} \}
\]
and, in particular,
$l_1 + l_2/r_1 < l_1 + l_2$ if $r_1 \geq 2$.
\end{rem}

\section{Proof of Main Theorem}

Theorem \ref{main theorem} follows from Lemma \ref{main lemma}
by the same arguments as in \cite{u2},
which are denoted again for the completeness. 
Let 
\[
f(z,w) =
(z^{\delta} + O(z^{\delta + 1}), z^{\gamma} w^{d} + \sum b_{ij} z^{i} w^{j})
\]
and $d \geq 2$.
We prove that
the composition $\phi_n = f_0^{-n} \circ f^n$ is well-defined on $U_r$ in Section 6.1,
converges uniformly to $\phi$ on $U_r$ in Section 6.2,
and the limit $\phi$ is injective on $U_r$ in Section 6.3.
Although the injectivity of the lift $\Phi$ of $\phi$ was proved in \cite{u2, u3},
we prove the injectivity of the lift $F$ of $f$ in this paper,
and obtain a larger region that ensures the injectivity of $\phi$ 
as stated in \cite[Remark 4.3]{u2}.

\subsection{Well-definedness of $\phi_n$}

Thanks to Lemma \ref{main lemma},
we may write 
\[
p(z) = z^{\delta} (1 + \zeta (z)) \text{ and } 
q(z,w) = z^{\gamma} w^d (1 + \eta (z,w)),
\]
where $\zeta$ and $\eta$ are holomorphic on $U_r$ and
converge to $0$ on $U_r$ as $r \to 0$. 
Then the first and second components of $f^n$ are written as
\[
z^{\delta^n} \prod_{j = 1}^{n} (1 + \zeta (p^{j - 1} (z)))^{\delta^{n-j}}
\text{ and}
\]
\[
z^{\gamma_n} w^{d^n} 
\prod_{j = 1}^{n - 1} (1 + \zeta (p^{j - 1} (z)))^{\gamma_{n-j}} 
\prod_{j = 1}^{n} (1 + \eta (f^{j - 1} (z,w)))^{d^{n-j}}, 
\]
where $\gamma_n = \sum_{j=1}^{n} \delta^{n-j} d^{j-1} \gamma$.
Since
$f_0^{-n} (z,w) = (z^{1 / \delta^n}, z^{- \gamma_n / \delta^n d^n} w^{1 / d^n})$,
we can define $\phi_n$ as follows:
\[ 
\phi_n (z,w)
= \left( z \cdot \prod_{j = 1}^{n} \sqrt[\delta^j]{1 + \zeta (p^{j - 1} (z))},
w \cdot \prod_{j = 1}^{n} \frac{\sqrt[d^j]{1 + \eta (f^{j - 1} (z,w))}}
{\sqrt[(\delta d)^j]{\{ 1 + \zeta (p^{j - 1} (z)) \}^{\gamma_j}}} \right),
\]
which is well-defined and so holomorphic on $U_r$.

\subsection{Uniform convergence of $\phi_n$}

In order to prove the uniform convergence of $\phi_n$,
we lift $f$ and $f_0$ to $F$ and $F_0$ 
by the exponential product $\pi (z,w) = (e^z, e^w)$; 
that is, 
$\pi \circ F = f \circ \pi$ and $\pi \circ F_0 = f_0 \circ \pi$.
More precisely,
we define
\[
F(Z, W) = (P(Z), Q(Z,W)) 
\]
\[
= (\delta Z + \log (1 + \zeta (e^Z)),
\gamma Z + dW + \log (1 + \eta (e^Z, e^W)))
\]
and $F_0 (Z,W) = (\delta Z, \gamma Z + d W)$.
By Lemma \ref{main lemma},
we may assume that
\[
\| F - F_0 \| < \tilde{\varepsilon} \text{ on } \pi^{-1} (U_r),
\]
where $||(Z,W)|| = \max \{ |Z|, |W| \}$ and $\tilde{\varepsilon} = \log (1 + \varepsilon)$.
Similarly, 
we can lift $\phi_n$ to $\Phi_n$ so that 
the equation $\Phi_n = F_0^{-n} \circ F^n$ holds; thus, for any $n \geq 1$, 
\[
\Phi_n (Z, W) = \left( \frac{1}{\delta^n} P_n(Z), 
\frac{1}{d^n} Q_n(Z,W) - \frac{\gamma_n}{\delta^n d^n} P_n(Z) \right), 
\] 
where $(P_n(Z),Q_n(Z,W)) = F^n(Z,W)$.
Let $\Phi_n = (\Phi_n^1, \Phi_n^2)$. 
Then
\[
|\Phi_{n+1}^1 - \Phi_n^1|
= \left| \frac{P_{n+1}}{\delta^{n+1}} - \frac{P_n}{\delta^n} \right|
= \frac{|P_{n+1} - \delta P_n|}{\delta^{n+1}} < \frac{1}{\delta^{n+1}} \tilde{\varepsilon}
\text{ and}
\]
\[
|\Phi_{n+1}^2 - \Phi_n^2|
= \left| \left\{ \frac{Q_{n+1}}{d^{n+1}} - 
\frac{\gamma_{n+1} P_{n+1}}{\delta^{n+1} d^{n+1}} \right\}
- \left\{ \frac{Q_n}{d^n} - 
\frac{\gamma_n P_n}{\delta^n d^n} \right\} \right|
\]
\[
= \left| \frac{Q_{n+1}}{d^{n+1}} 
- \frac{\gamma P_n}{d^{n+1}} - \frac{Q_n}{d^n} \right| 
+ \left| \frac{\gamma_{n+1} P_{n+1}}{\delta^{n+1} d^{n+1}}  
- \frac{\gamma_n P_n}{\delta^n d^n} - \frac{\gamma P_n}{d^{n+1}} \right|
\]
\[
= \frac{|Q_{n+1} - (\gamma P_n + d Q_n)|}{d^{n+1}} 
+ \frac{\gamma_{n+1} |P_{n+1} - \delta P_n|}{\delta^{n+1} d^{n+1}} 
< \frac{1}{d^{n+1}} \tilde{\varepsilon} + \frac{\gamma_{n+1}}{\delta^{n+1} d^{n+1}} \tilde{\varepsilon}.
\]
Hence $\Phi_n$ converges uniformly to $\Phi$.
In particular, 
\[
\| \Phi - id \| < \max \left\{ \frac{1}{\delta - 1}, 
\frac{1}{d-1} + \frac{\gamma}{\delta - d} 
\left( \frac{1}{d - 1} - \frac{1}{\delta - 1} \right) \right\} \tilde{\varepsilon}
\text{ if } \delta \neq d, \text{ and}
\]
\[
\| \Phi - id \| < \left\{ \frac{1}{d - 1} + \frac{\gamma}{(d-1)^2} \right\} \tilde{\varepsilon}
\text{ if } \delta = d.
\]
By the inequality $|e^{z_1}/e^{z_2} - 1| \leq |z_1 - z_2| e^{|z_1 - z_2|}$,
the uniform convergence of $\Phi_n$ translates into that of $\phi_n$.
Therefore,
$\phi$ is holomorphic on $U_r \setminus \{ zw = 0 \}$.
In particular, if $||\Phi - id|| < \varepsilon$, 
then $||\phi - id|| < \varepsilon e^{\varepsilon} ||id||$.
Hence 
$\phi \sim id$ on $U_r \setminus \{ zw = 0 \}$ as $r \to 0$.
Thanks to Riemann's extension theorem,
$\phi$ extends holomorphically to $U_r$,
and $\phi \sim id$ on $U_r^{}$ as $r \to 0$.

\subsection{Injectivity of $\phi$}

We prove that,
after shrinking $r$ if necessary,
the lift $F$ is injective on $\pi^{-1} (U_r)$.
Hence $F^n$, $\Phi_n$ and $\Phi$ are injective on the same region.
The injectivity of $\Phi$ derives that of $\phi$ because  $\Phi \sim id$.
 
It is enough to consider Case 4. 
In that case,
 $F$ is holomorphic on $V$, where 
\[
V 
= \pi^{-1} (U_{r})
= \left\{ (l_1 + l_2) \mathrm{Re} Z - l_2 \log r < \mathrm{Re} W < l_1 \mathrm{Re} Z + \log r \right\}.
\]
In particular,
$P$ is holomorphic and $|P- \delta Z| < \tilde{\varepsilon}$ on $H$,
where
\[
H = \left\{ Z \ | \ \mathrm{Re} Z < \left( 1 + l_2^{-1} \right) \log r \right\}.
\]
Rouch\'e's theorem guarantees the injectivity of $P$ on $H'$,
where
\[
H' =
\left\{ Z \ \Big| \ \mathrm{Re} Z < \left( 1 + \dfrac{1}{l_2} \right) \log r - \dfrac{2 \tilde{\varepsilon}}{\delta} \right\}
\subset H.
\]

\begin{prop}\label{biholo of lift}
The function
$P$ is injective on $H'$. 
\end{prop}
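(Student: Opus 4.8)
The plan is to prove injectivity of $P$ on $H'$ by a standard application of Rouch\'e's theorem, exploiting that $P$ is a small perturbation of the linear map $Z \mapsto \delta Z$. Write $P(Z) = \delta Z + R(Z)$ where $R(Z) = \log(1 + \zeta(e^Z))$, so that $|R| < \tilde{\varepsilon}$ on $H$ by Lemma \ref{main lemma} (after fixing $\varepsilon$, hence $\tilde\varepsilon = \log(1+\varepsilon)$, and the corresponding $r$). I would fix two distinct points $Z_1, Z_2 \in H'$ and show $P(Z_1) \neq P(Z_2)$.

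The key steps, in order: first, observe that since $\mathrm{Re}\, Z_1, \mathrm{Re}\, Z_2 < (1 + 1/l_2)\log r - 2\tilde\varepsilon/\delta$, the closed disc (in the $Z$-plane) centered at $Z_1$ with any radius up to $2\tilde\varepsilon/\delta$ still lies in $H$, because translating the real part by at most $2\tilde\varepsilon/\delta$ keeps us inside $\{\mathrm{Re}\, Z < (1+1/l_2)\log r\}$; so $R$ is holomorphic and bounded by $\tilde\varepsilon$ on such discs. Second, suppose for contradiction $P(Z_1) = P(Z_2)$; then $\delta(Z_1 - Z_2) = R(Z_2) - R(Z_1)$, so $|Z_1 - Z_2| \leq 2\tilde\varepsilon/\delta$. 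Third, apply Rouch\'e on the disc $D$ centered at $Z_2$ of radius $|Z_1 - Z_2| \le 2\tilde\varepsilon/\delta$ (which lies in $H$ by the first step): on $\partial D$ we have $|R(Z) - R(Z_2)| \le 2\tilde\varepsilon < \delta|Z - Z_2| = |(\delta Z + R(Z_2)) - (\delta Z_2 + R(Z_2))|$ wait — more carefully, compare $P(Z) - P(Z_2) = \delta(Z - Z_2) + (R(Z) - R(Z_2))$ with $\delta(Z - Z_2)$: on $\partial D$, $|R(Z) - R(Z_2)| \le 2\tilde\varepsilon$ while $|\delta(Z - Z_2)| = \delta|Z_1 - Z_2|$, and if $|Z_1 - Z_2| > 2\tilde\varepsilon/\delta$ strictly this gives the strict inequality needed, so $P(Z) - P(Z_2)$ has the same number of zeros in $D$ as $\delta(Z - Z_2)$, namely exactly one (at $Z_2$); but $Z_1 \in \partial D$ is also a zero, and to use it we should take $D$ slightly larger, of radius $|Z_1 - Z_2| + \epsilon_0$ for small $\epsilon_0$, still inside $H$ for $\epsilon_0$ small. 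On that slightly larger disc the strict inequality $|R(Z) - R(Z_2)| \le 2\tilde\varepsilon \le \delta|Z_1 - Z_2| < \delta(|Z_1-Z_2| + \epsilon_0)$ holds on the boundary, Rouch\'e gives exactly one zero of $P - P(Z_2)$ inside, but both $Z_1$ and $Z_2$ are zeros — contradiction. Hence $P(Z_1) \neq P(Z_2)$, and $P$ is injective on $H'$.

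The main obstacle — really a bookkeeping subtlety rather than a deep difficulty — is arranging the disc on which Rouch\'e is applied to sit inside the region $H$ where the bound $|R| < \tilde\varepsilon$ is available, while simultaneously being large enough to contain both $Z_1$ and $Z_2$ in its interior. This is exactly why the definition of $H'$ subtracts $2\tilde\varepsilon/\delta$ from the threshold defining $H$: that buffer is precisely the worst-case separation $|Z_1 - Z_2|$ forced by the equation $\delta(Z_1-Z_2) = R(Z_2) - R(Z_1)$. I would state this as the one inequality to check carefully and then let Rouch\'e do the rest. An alternative, perhaps cleaner, packaging is to invoke the classical fact that if $g$ is holomorphic on a convex domain with $\sup |g'| < \delta$ then $Z \mapsto \delta Z + g(Z)$ is injective there; but since here we only control $|R|$ and not $|R'|$ directly on all of $H'$, the Rouch\'e argument on small discs contained in $H$ is the robust route, and matches the phrasing already used in the excerpt.
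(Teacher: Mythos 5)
Your overall strategy---write $P=\delta Z+R$ with $|R|<\tilde{\varepsilon}$ on $H$, assume $P(Z_1)=P(Z_2)$, and run Rouch\'e against the linear map using the $2\tilde{\varepsilon}/\delta$ buffer built into $H'$---is the same as the paper's, but the execution of the Rouch\'e step has a genuine gap. From $\delta(Z_1-Z_2)=R(Z_2)-R(Z_1)$ you correctly deduce $\delta|Z_1-Z_2|<2\tilde{\varepsilon}$; yet in your boundary estimate you invoke the reverse inequality ``$2\tilde{\varepsilon}\le\delta|Z_1-Z_2|$'', which contradicts what you just proved. The underlying problem is the choice of contour: on a disc of radius $\rho=|Z_1-Z_2|+\epsilon_0$ centered at $Z_2$, Rouch\'e needs $|R(Z)-R(Z_2)|<|\delta(Z-Z_2)|=\delta\rho$ on the boundary, but the only available bound is $|R(Z)-R(Z_2)|<2\tilde{\varepsilon}$ (you control $|R|$, not $|R'|$, so you cannot claim $R(Z)-R(Z_2)$ is small on small discs), and $\delta\rho$ can be far smaller than $2\tilde{\varepsilon}$ when $Z_1$ is close to $Z_2$. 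So the argument as written fails exactly in the regime that matters most for injectivity, namely nearby pairs of points.

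The fix is short and brings you back to the paper's proof: do not tailor the contour to $|Z_1-Z_2|$; put it at distance $2\tilde{\varepsilon}/\delta$ from the points. For instance, let $D$ be the disc centered at $Z_2$ of radius exactly $2\tilde{\varepsilon}/\delta$. It lies in $H$ precisely because $Z_2\in H'$; it contains $Z_1$ in its interior since $\delta|Z_1-Z_2|=|R(Z_2)-R(Z_1)|<2\tilde{\varepsilon}$; and on $\partial D$ one has $|(P(Z)-P(Z_2))-\delta(Z-Z_2)|=|R(Z)-R(Z_2)|<2\tilde{\varepsilon}=|\delta(Z-Z_2)|$, so Rouch\'e gives exactly one zero of $P-P(Z_2)$ in $D$, contradicting the presence of the two distinct zeros $Z_1$ and $Z_2$. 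This is essentially what the paper does: it takes a closed curve $\Gamma\subset H$ whose interior contains $Z_1,Z_2$ and whose distance to each exceeds $2\tilde{\varepsilon}/\delta$, and compares $g=P-P(Z_1)$ with $h=\delta Z-P(Z_1)$, obtaining $|g-h|<\tilde{\varepsilon}<|h|$ on $\Gamma$.
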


\begin{proof}
Let $Z_1$ and $Z_2$ be two points in $H'$
such that $P(Z_1) = P(Z_2)$,
and show that $Z_1 = Z_2$. 
Define $g (Z) = P(Z) - P(Z_1)$ and $h (Z) = \delta Z - P(Z_1)$.
Then $|g - h| = |P -\delta Z| < \tilde{\varepsilon}$ on $H$.
By the definitions of $H$ and $H'$,
there is a smooth, simply closed curve $\Gamma$ in $H$
whose distances from $Z_1$ and $Z_2$ are greater than $2 \tilde{\varepsilon}/ \delta$
and whose interior contains the two points $Z_1$ and $Z_2$.
Hence 
\[
|h| = |\delta Z - P(Z_1)| \geq |\delta Z - \delta Z_1| - |\delta Z_1 - P(Z_1)| 
    > 2 \tilde{\varepsilon} - \tilde{\varepsilon} = \tilde{\varepsilon}
\]
on $\Gamma$.
Therefore,
$|g - h| < |h|$ on $\Gamma$.
Rouch\'e's theorem implies that 
the number of zero points of $g$ is exactly one
in the region surrounded by $\Gamma$;
thus $Z_1 = Z_2$. 
\end{proof}

Let $V'_Z = V' \cap (\{ Z \} \times \mathbb{C})$, 
where
\[
V' =
\left\{ (l_1 + l_2) \mathrm{Re} Z - l_2 \log r + \frac{2 \tilde{\varepsilon}}{d} 
 < \mathrm{Re} W < l_1 \mathrm{Re} Z + \log r - \frac{2 \tilde{\varepsilon}}{d} \right\}
\subset V.
\]

The same argument induces the injectivity of $Q_Z$ on $V'_Z$. 

\begin{prop}\label{biholo of lift}
The function
$Q_Z$ is injective on $V'_Z$ for any fixed $Z$. 
\end{prop}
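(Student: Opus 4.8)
The plan is to repeat, almost verbatim, the Rouch\'e argument just used for $P$, now carried out in the $W$-variable with $Z$ held fixed. Fix $Z$ and let $V_Z = V \cap (\{ Z \} \times \mathbb{C})$, so that $V'_Z \subset V_Z \subset$ the $Z$-slice of $V = \pi^{-1}(U_r)$. The first step is to record that, by Lemma~\ref{main lemma} and the definition of $Q$, one has $|Q(Z,W) - (\gamma Z + d W)| < \tilde\varepsilon$ for every $W \in V_Z$, since $Q(Z,W) - (\gamma Z + d W) = \log(1 + \eta(e^Z,e^W))$. Geometrically $V_Z$ is a vertical strip in the $W$-plane and $V'_Z$ is obtained from it by moving each of the two boundary lines inward by $2\tilde\varepsilon/d$, so every point of $V'_Z$ lies at distance greater than $2\tilde\varepsilon/d$ from $\partial V_Z$.

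Next I would take two points $W_1, W_2 \in V'_Z$ with $Q(Z,W_1) = Q(Z,W_2)$ and set $g(W) = Q(Z,W) - Q(Z,W_1)$ and $h(W) = (d W + \gamma Z) - Q(Z,W_1)$, so that $|g - h| = |Q(Z,W) - (d W + \gamma Z)| < \tilde\varepsilon$ on $V_Z$. Because $W_1$ and $W_2$ each lie at distance greater than $2\tilde\varepsilon/d$ from $\partial V_Z$, I can choose a smooth, simply closed curve $\Gamma$ contained in $V_Z$ whose interior contains both $W_1$ and $W_2$ and whose distance from each of them exceeds $2\tilde\varepsilon/d$ (a sufficiently long rectangle inside the strip suffices). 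Then on $\Gamma$,
\[
|h| = |d W + \gamma Z - Q(Z,W_1)| \geq |d W - d W_1| - |d W_1 + \gamma Z - Q(Z,W_1)| > 2\tilde\varepsilon - \tilde\varepsilon = \tilde\varepsilon ,
\]
hence $|g - h| < |h|$ on $\Gamma$. Rouch\'e's theorem then gives that $g$ and $h$ have the same number of zeros inside $\Gamma$; since $h$ is affine in $W$ with leading coefficient $d \neq 0$ it has exactly one zero there, so $g$ does too, and therefore $W_1 = W_2$.

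I do not anticipate any genuine obstacle: the argument is structurally identical to the one for $P$. The only points needing a moment's care are purely bookkeeping, namely checking that the $Z$-slice of $V$ is the strip on which the estimate of Lemma~\ref{main lemma} applies, and that the inward shift by $2\tilde\varepsilon/d$ in the definition of $V'$ is precisely what leaves enough clearance to fit the curve $\Gamma$; both are immediate from the explicit descriptions of $V$ and $V'$. If one prefers a cleaner statement, the same computation in fact shows that $W \mapsto Q(Z,W)$ is injective on the slightly larger slice obtained by shrinking only in the sense required for $\Gamma$ to exist, but the stated $V'_Z$ is all that is needed later.
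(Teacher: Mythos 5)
Your proof is correct and is exactly the argument the paper intends: the paper proves the analogous statement for $P$ on $H'$ via Rouch\'e's theorem and then simply remarks that ``the same argument'' gives the injectivity of $Q_Z$ on $V'_Z$, which is what you have carried out, with the estimate $|Q(Z,W)-(\gamma Z+dW)|<\tilde\varepsilon$ on the slice and the clearance $2\tilde\varepsilon/d$ playing the roles of $|P-\delta Z|<\tilde\varepsilon$ and $2\tilde\varepsilon/\delta$. No gaps; the bookkeeping points you flag (the slice of $V=\pi^{-1}(U_r)$ being the strip where the estimate holds, and the inward shift leaving room for $\Gamma$) are handled just as in the paper's proof for $P$.
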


Note that $V' \subset \left\{ \mathrm{Re} Z < \left( 1 + \dfrac{1}{l_2} \right) \log r - \dfrac{4 \tilde{\varepsilon}}{l_2 d} \right\}$
and let $C = \max \left\{ \dfrac{1}{d}, \dfrac{l_2}{2 \delta} \right\}$.

\begin{cor}
The maps $F$, $F^n$, $\Phi_n$ and $\Phi$ are injective on
\[
\left\{ (l_1 + l_2) \mathrm{Re} Z - l_2 \log r + 2 C \tilde{\varepsilon}
 < \mathrm{Re} W < l_1 \mathrm{Re} Z + \log r - 2 C \tilde{\varepsilon} \right\}.
\]
\end{cor}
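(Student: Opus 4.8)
The plan is to deduce injectivity of $F$ on the stated wedge $R$ from the two preceding propositions (injectivity of $P$ on $H'$ and of $Q_Z$ on $V'_Z$) by exploiting the skew structure $F(Z,W)=(P(Z),Q(Z,W))$, then to propagate injectivity through the iteration and the linear change of coordinates $F_0^{-n}$, and finally to pass to the limit.

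First I would observe that the constant $C=\max\{1/d,\ l_2/(2\delta)\}$ is chosen exactly so that $R$ sits correctly inside the earlier regions. Over a fixed $Z$ the wedge $R$ is nonempty precisely when $\mathrm{Re}\,Z<(1+l_2^{-1})\log r-4C\tilde\varepsilon/l_2$; since $C\ge l_2/(2\delta)$ this bound is at least as strong as the one defining $H'$, so the $Z$-projection of $R$ lies in $H'$, and since $C\ge 1/d$ one gets the two-sided inclusion $R\subset V'$. Consequently, for any two points $(Z_1,W_1),(Z_2,W_2)\in R$ with $F(Z_1,W_1)=F(Z_2,W_2)$, the first coordinates give $P(Z_1)=P(Z_2)$, hence $Z_1=Z_2$ by injectivity of $P$ on $H'$; the second coordinates then give $Q_{Z_1}(W_1)=Q_{Z_1}(W_2)$, hence $W_1=W_2$ by injectivity of $Q_{Z_1}$ on $V'_{Z_1}$. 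Thus $F$ is injective on $R$.

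Next I would verify that $R$ is forward invariant, $F(R)\subset R$, after shrinking $r$ once more if necessary. This is not immediate from Lemma \ref{main lemma}(2), which only yields $F(V)\subset V$; instead I would reuse the quantitative estimates proved inside Lemma \ref{main lemma} for Case 4, namely $|q/p^{l_1}|<r^d$ and $|p^{1+l_1l_2^{-1}}/q^{l_2^{-1}}|<r^d$ on $U_r$. Translated to logarithmic coordinates and combined with $d\ge 2$, these say that $F$ moves each point of the wedge into its interior by an amount of order $|\log r|$, which for $r$ small dominates the $\tilde\varepsilon$-sized margin by which $R$ is cut out of $V$. Granting $F(R)\subset R$ together with the injectivity of $F$ on $R$, the iterate $F^n=F\circ\cdots\circ F$ is a composition of injective self-maps of $R$, hence injective on $R$.

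Finally, $F_0(Z,W)=(\delta Z,\gamma Z+dW)$ is a linear automorphism of $\mathbb{C}^2$, so $F_0^{-n}$ is too, and therefore $\Phi_n=F_0^{-n}\circ F^n$ is injective on $R$. The uniform convergence $\Phi_n\to\Phi$ from Section 6.2, together with $\Phi\sim id$, exhibits $\Phi$ as a non-constant locally uniform limit of injective holomorphic maps, so Hurwitz's theorem gives that $\Phi$ is injective on $R$. The step I expect to be the main obstacle is the forward-invariance $F(R)\subset R$: since $R$ is a proper contraction of $V$ by an $\varepsilon$-dependent margin, one must extract from the main-lemma estimates a definite contraction of the wedge that, uniformly in $n$ and for all sufficiently small $r$, beats that margin; the remaining ingredients — the skew-product reduction of injectivity, the linear map $F_0^{-n}$, and the Hurwitz limit — are routine.
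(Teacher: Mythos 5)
Your proposal is correct and follows essentially the paper's own route: the constant $C=\max\{1/d,\,l_2/(2\delta)\}$ is chosen exactly so that the wedge projects into $H'$ and lies in $V'$, injectivity of $F$ then follows from the two propositions via the skew structure, and injectivity of $F^n$, $\Phi_n$ and $\Phi$ follows by forward invariance, invertibility of the linear map $F_0$, and passage to the limit. The invariance and Hurwitz details you supply are precisely the steps the paper leaves implicit (cf.\ its remark that one might need to shrink $r$ a little), and they go through as you describe: $d\geq 2$ makes the image land deeper than the $2C\tilde{\varepsilon}$ margin for small $r$, and $\Phi\sim id$ rules out degeneracy of the limit map.
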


As mentioned above,
the injectivity of $\Phi$ derives that of $\phi$. 

\begin{prop}\label{biholo of phi}
The B\"{o}ttcher coordinate $\phi$ is injective on 
\[
\left\{ 
\frac{(1 + \varepsilon)^{2C}}{r^{l_2}} |z|^{l_1 + l_2} < |w| < \frac{r}{(1 + \varepsilon)^{2C}} |z|^{l_1}  
\right\}.
\]
\end{prop}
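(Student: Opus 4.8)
The plan is to transfer the injectivity of the lift $\Phi$ on the strip
\[
\widetilde{V} = \left\{ (l_1 + l_2) \mathrm{Re} Z - l_2 \log r + 2 C \tilde{\varepsilon}
 < \mathrm{Re} W < l_1 \mathrm{Re} Z + \log r - 2 C \tilde{\varepsilon} \right\}
\]
down to $\phi$ via the covering $\pi(Z,W) = (e^Z, e^W)$. First I would record that $\phi \circ \pi = \pi \circ \Phi$ on $\widetilde{V}$, which follows from the corresponding relation $\phi_n \circ \pi = \pi \circ \Phi_n$ for the finite approximants and passing to the limit (both sides converge uniformly on compacta). Next I would identify the image $\pi(\widetilde{V})$: since $\mathrm{Re} Z = \log|z|$ and $\mathrm{Re} W = \log|w|$, exponentiating the two defining inequalities and writing $\log(1+\varepsilon) = \tilde\varepsilon$ turns the strip into exactly the region
\[
\left\{ \frac{(1 + \varepsilon)^{2C}}{r^{l_2}} |z|^{l_1 + l_2} < |w| < \frac{r}{(1 + \varepsilon)^{2C}} |z|^{l_1} \right\},
\]
so the claimed domain is precisely $\pi(\widetilde{V})$.

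The core of the argument is then a standard covering-space descent. Suppose $(z_1,w_1), (z_2,w_2) \in \pi(\widetilde{V})$ with $\phi(z_1,w_1) = \phi(z_2,w_2)$. Choose lifts $(Z_i, W_i) \in \widetilde{V}$ with $\pi(Z_i,W_i) = (z_i,w_i)$. Then $\pi(\Phi(Z_1,W_1)) = \phi(z_1,w_1) = \phi(z_2,w_2) = \pi(\Phi(Z_2,W_2))$, so $\Phi(Z_1,W_1)$ and $\Phi(Z_2,W_2)$ differ by an element of the deck group $2\pi i (\mathbb{Z}\times\mathbb{Z})$. Here I would use that $\Phi \sim id$ on $\widetilde{V}$ as $r \to 0$ (established in Section 6.2), together with the fact that the deck translations are a fixed lattice: for $r$ small enough, if $\Phi(Z_1,W_1) - \Phi(Z_2,W_2) = 2\pi i (a,b)$ with $(a,b) \in \mathbb{Z}^2$, then since $\|\Phi - id\|$ is small we get $(Z_1,W_1) - (Z_2,W_2)$ close to $2\pi i(a,b)$; but I must be careful that $(Z_1,W_1)$ and $(Z_2,W_2)$ need not be close a priori. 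The cleanest route: replace $(Z_2,W_2)$ by the lift $(Z_2 + 2\pi i a, W_2 + 2\pi i b)$, which still lies in $\widetilde{V}$ because the strip is invariant under the deck group (its defining inequalities involve only real parts), and after this replacement $\Phi(Z_1,W_1) = \Phi(Z_2 + 2\pi i a, W_2 + 2\pi i b)$. Now apply the injectivity of $\Phi$ on $\widetilde{V}$ from the Corollary to conclude $(Z_1,W_1) = (Z_2 + 2\pi i a, W_2 + 2\pi i b)$, hence $(z_1,w_1) = (z_2,w_2)$.

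I would also remark that $\phi$ is already known to be holomorphic on all of $U_r$ (Section 6.2, via Riemann extension), so once injectivity on $\pi(\widetilde{V})$ is in hand, $\phi$ is biholomorphic onto its image there; combined with Theorem \ref{main theorem}'s conjugacy relation this is what "B\"{o}ttcher coordinate" means on that sub-wedge. The main obstacle I anticipate is the bookkeeping in the covering descent — specifically making rigorous that the ambiguity lands in the deck lattice and that translating one lift by a lattice element keeps it inside $\widetilde{V}$ — rather than any hard analysis; all the genuinely analytic work (the Rouch\'e estimates giving injectivity of $P$ and of each $Q_Z$, and the uniform bound $\|\Phi - id\| < \varepsilon$) has already been done in the preceding propositions. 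A secondary point to check is that $\widetilde{V}$ is connected (it is, being the region between two parallel affine hyperplanes in $\mathbb{R}^2$ lifted to $\mathbb{C}^2$), so that the deck-group argument does not require passing between components.
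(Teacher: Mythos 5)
Your proposal is correct and follows essentially the same route as the paper, which likewise deduces the injectivity of $\phi$ from that of $\Phi$ on the strip, using $\Phi \sim id$ and the exponential covering (the paper states this descent in one line). The only step you assert without proof is that $\Phi(Z_2 + 2\pi i a, W_2 + 2\pi i b) = \Phi(Z_2,W_2) + 2\pi i(a,b)$, i.e.\ the deck-equivariance of the lift, but this is immediate either from the explicit construction $\Phi_n = F_0^{-n}\circ F^n$ or from $\|\Phi - id\| < \tilde{\varepsilon} < \pi$ together with connectedness of the strip, so it is a harmless omission.
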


\begin{rem}\label{the critical set of f in U}
Since $f \sim f_0$ on $U_r$,
it follows that $Df \sim Df_0$ on $U_r$. 
Hence the intersection of the critical set $C_f$ of $f$ and $U_r$ is included in $\{ zw = 0\}$
for small $r$.
By almost the same arguments as in Section 9,
we can show the following:
$\phi$ extends to a biholomorphic map on $U_R$
if $U_R \cap C_f \subset \{ zw = 0\}$ and $U_R \subset A_f$,
where $A_f$ is the union of all the preimages of $U_r$ under $f$. 
\end{rem}

\section{The case $d = 1$}

We prove Lemma \ref{main lemma for d=1} and Theorem  \ref{main theorem for d=1}
in this section.
Let
\[
f(z,w) =
(z^{\delta} + O(z^{\delta + 1}), b z^{\gamma} w + \sum b_{ij} z^{i} w^{j}),
\]
where $b = b_{\gamma d} \neq 0$
and $\gamma \geq 1$.
The proof of the uniform convergence of $\phi_n$ is different from the case $d \geq 2$;
we use the same idea as in \cite{u2} to prove it and,
in addition,
we need a new number $M$ in Lemma \ref{lem2: d = 1} for Case 4.
Example \ref{example} shows that 
we can not remove the additional condition $\delta \neq T_k$ for any $k$.

The invariance of $U_r$ and so Lemma \ref{main lemma for d=1}
follow from the additional condition. 

\begin{proof}[Proof of Lemma \ref{main lemma for d=1}]
The proof of the former statement is the same as the case $d \geq 2$.
We show that the condition $\delta \neq T_k$ for any $k$
induces the invariance of $U_r$. 

For Case 2,
the condition implies that $\delta < T_{s-1}$,
which is equivalent to the inequality $\tilde{\gamma} = \gamma + l_1 d - l_1 \delta > 0$.
Hence $f$ preserves $U_r$ for small $r$.

For Case 3,
the condition implies that $T_1 < \delta$,
which is equivalent to the inequality $\delta > l_2^{-1} \gamma + d = \tilde{d}$.
Hence $f$ preserves $U_r$ for small $r$.

For Case 4,
the condition implies that $T_{k} < \delta < T_{k-1}$,
which is equivalent to the inequalities $\tilde{\gamma} > 0$ and $\delta > \tilde{d} > d$.
Hence $f$ preserves $U_r$ for small $r$.
\end{proof}

More strongly,
$f^n$ contracts $U_r$ rapidly, and
the following lemma is the beginning of the proof of the uniform convergence of $\phi_n$.

\begin{lem}\label{lem1: d = 1}
If $d = 1$ and $\delta \neq T_k$ for any $k$, 
then $f^n(U_r) \subset U_{r/2^n}$ for small $r$.
\end{lem}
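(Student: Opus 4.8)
The plan is to track, under $f$, the size of the ``slack'' that measures how deeply a point sits inside the wedge $U_r$, and to show that this slack shrinks geometrically fast. Concretely, for a point $(z,w)\in U_r$ write $|w| = r^{t}|z|^{l_1}$ and $|z|^{l_1+l_2} = r^{l_2} s |z|^{l_1}$ (equivalently introduce the two ratios from the proof of Lemma \ref{main lemma}, $|w|/(r|z|^{l_1})<1$ and $r^{-l_2}|z|^{l_1+l_2}/|w|<1$), so that membership in $U_r$ is the statement that two explicit quantities are $<1$, and membership in $U_{r/2^n}$ is the statement that those quantities are $< 2^{-n(\text{something positive})}$ after suitable normalization. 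Since $f\sim f_0$ on $U_r$ by Lemma \ref{main lemma for d=1}, it suffices to do the computation for the monomial map $f_0(z,w)=(z^\delta, z^\gamma w)$ and absorb the perturbation into a harmless constant, exactly as in the invariance argument already carried out.

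First I would recall from the proof of Lemma \ref{main lemma for d=1} that the additional hypothesis $\delta\neq T_k$ forces the \emph{strict} inequalities $\tilde\gamma>0$ and $\delta>\tilde d>d$ in Case 4 (and the analogous strict inequalities in Cases 2 and 3). These strict inequalities are the source of the geometric decay: in the estimates
\[
\left|\frac{q(z,w)}{p(z)^{l_1}}\right|\sim |t|^{\tilde\gamma}|c|^{\tilde d}\le |c|^{\tilde d}
\quad\text{and}\quad
\left|\frac{p(z)^{1+l_1 l_2^{-1}}}{q(z,w)^{l_2^{-1}}}\right|\sim |t|^{\delta-l_2^{-1}\tilde\gamma}|c|^{l_2^{-1}(\delta-\tilde d)}\le |t|^{\delta-l_2^{-1}\tilde\gamma},
\]
the exponents $\tilde d\ge d\ge 1$ and $\delta - l_2^{-1}\tilde\gamma\ge \tilde d\ge 1$ would only give $f(U_r)\subset U_r$; but with $d=1$ replaced by the true strict gaps one gets a definite exponent $>1$ when $\tilde\gamma>0$ and $\delta-\tilde d>0$, hence a genuine contraction factor. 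So the core step is: choose $r$ small enough that the perturbation constant $C$ from Lemma \ref{main lemma for d=1} satisfies $C r^{\,\kappa}<1/2$ for the relevant positive exponent $\kappa$ (built from $\tilde\gamma$, $\delta$, $\tilde d$), and then verify by the same chain of inequalities, run with $r/2$ in place of $r$ on the target side, that $f$ maps $U_r$ into $U_{r/2}$. Iterating, $f^n(U_r)\subset U_{r/2^n}$, provided one checks that $r/2^n$ stays in the admissible range — which it does, since the admissible range is ``all sufficiently small $r$'' and we are only shrinking.

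I expect the main obstacle to be bookkeeping rather than a conceptual difficulty: one must be careful that shrinking the \emph{second} coordinate's bound (the factor $r$ in $|w|<r|z|^{l_1}$) and the \emph{first} coordinate's bound (hidden in $|z|<r$, or in the lower bound $r^{-l_2}|z|^{l_1+l_2}<|w|$) are controlled simultaneously, and that the perturbation terms $1+\zeta$, $1+\eta$ never conspire to push a point back out of the narrowed wedge. The cleanest way to handle this is to reuse the two-sided estimates from the Case 2, 3, 4 proofs verbatim, noting that each already produces an upper bound of the form $(\text{const})\cdot r^{\kappa}$ with $\kappa>0$ under the present strict inequalities; picking $r$ with $(\text{const})\cdot r^{\kappa}<1/2$ then immediately yields the factor $1/2$ improvement, and the induction on $n$ is automatic. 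One subtlety worth a sentence in the write-up: the constant in the perturbation bound depends on $r$ through $\|\zeta\|,\|\eta\|$, but since those tend to $0$ as $r\to 0$, after fixing $r$ small once the same constant works on all the smaller wedges $U_{r/2^n}$, so the induction does not require re-choosing $r$ at each stage.
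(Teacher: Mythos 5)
Your proposal is correct and follows essentially the same route as the paper: reduce to the wedge coordinates $|w|=|z^{l_1}c|$, $|z|=|tc^{l_2^{-1}}|$, use the strict inequalities $\tilde\gamma>0$, $\delta>\tilde d>d=1$ forced by $\delta\neq T_k$ to get exponents strictly larger than $1$ in the two monomial estimates, absorb the perturbation into constants $C_1,C_2$, choose $r$ once so that $C\,r^{\kappa}<1/2$, and iterate — which is exactly the paper's argument, including your observation that the same constants work on the smaller wedges $U_{r/2^n}$.
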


\begin{proof}
It is enough to show the lemma for Case 4.
We first give an abstract idea of the proof.
If $b = 1$ then, formally,
\[
\tilde{f}_2 (t,c) \sim 
( t^{\delta - l_2^{-1} \tilde{\gamma}} c^{l_2^{-1} (\delta - \tilde{d})}, \ t^{\tilde{\gamma}} c^{\tilde{d}} )
\text{ on } 
\{ |t| < r, |c| < r \}.
\]
By assumption,
$\delta - l_2^{-1} \tilde{\gamma} > d = 1$,
$l_2^{-1} (\delta - \tilde{d}) > 0$,
$\tilde{\gamma} > 0$ and 
$\tilde{d} > d = 1$.
If $\tilde{f}_2$ is well-defined,
then the origin is superattracting,
and it is easy to check that
\[
\tilde{f}_2( \{ |t| < r, |c| < r \} ) \subset \{ |t| < r/2, |c| < r/2 \}
\text{ and so}
\]
\[
\tilde{f}_2^n( \{ |t| < r, |c| < r \} ) \subset \{ |t| < r/2^n, |c| < r/2^n \}.
\]

This idea provides a proof immediately.
Actually,
\[
\left| \frac{p(z)^{1 + l_1 l_2^{-1}}}{q(z,w)^{l_2^{-1}}} \right| 
< C_1 \left| t^{\delta - l_2^{-1} \tilde{\gamma}} c^{l_2^{-1} (\delta - \tilde{d})} \right|
< C_1 |t|^{\delta - l_2^{-1} \tilde{\gamma} - 1} \cdot |t|
< \dfrac{1}{2} \cdot r 
\text{ and}
\]
\[
\left| \frac{q(z,w)}{p(z)^{l_1}} \right| 
< C_2 \left| t^{\tilde{\gamma}} c^{\tilde{d}} \right|
< C_2 |c|^{\tilde{d} - 1} \cdot |c| 
< \dfrac{1}{2} \cdot r 
\]
for some constants $C_1$ and $C_2$ and for small $r$.
Hence 
\[
f(U_r) \subset U_{r/2}
\text{ and so }
f^n(U_r) \subset U_{r/2^n}.
\]
\end{proof}

Let $M = 1$ for Cases 1, 2 and 3,
and $M = \min \{ \min \{ \tilde{n}_j - \tilde{\gamma} : \tilde{n}_j > \tilde{\gamma} \}, 1 \}$ for Case 4,
where $ \tilde{n}_j = n_j + l_1 m_j - l_1 \delta$
and $\tilde{\gamma} = \gamma + l_1 d - l_1 \delta$.
Then $0 < M \leq 1$.

\begin{lem}\label{lem2: d = 1}
If $d = 1$ and $\delta \neq T_k$ for any $k$, 
then 
\[
|\zeta (p^n)| < C_1 \cdot \dfrac{r}{2^n} \text{ and } |\eta (f^n)| < C_2 \left( \dfrac{r}{2^n} \right)^M
\]
on $U_r$
for some constants $C_1$ and $C_2$.
\end{lem}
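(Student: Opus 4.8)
The plan is to reduce the bound on $\zeta$ to the already-established contraction of $U_r$ under $f^n$, and to reduce the bound on $\eta$ to the structure of the exponents appearing in $\eta$ after passing to the $(t,c)$ coordinates used in the proof of Lemma \ref{main lemma} for Case 4. First I would recall that $\zeta(z) = p(z)/z^\delta - 1$ is holomorphic on a disc around $0$ with $\zeta(0)=0$, hence $|\zeta(z)| \le C_1' |z|$ near the origin. By Lemma \ref{lem1: d = 1}, $f^n(U_r) \subset U_{r/2^n}$, and on $U_{r/2^n}$ the first coordinate satisfies $|z| < r/2^n$; applying this to the first coordinate of $f^n(z,w)$, i.e. to $p^n(z)$, gives $|\zeta(p^n(z))| < C_1'\,|p^n(z)| < C_1 \cdot r/2^n$ on $U_r$. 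This disposes of the first estimate.

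For the second estimate I would work in the coordinates $|w| = |z^{l_1} c|$ and $|z| = |t c^{l_2^{-1}}|$ introduced in the proof of Lemma \ref{main lemma} for Case 4, under which $U_r$ becomes $\{0<|t|<r,\ 0<|c|<r\}$. As computed there,
\[
|\eta(z,w)| \le \sum_{(i,j) \ne (\gamma,d)} |b_{ij}|\, |t|^{\tilde i - \tilde\gamma}\, |c|^{\tilde j - \tilde d},
\]
where $\tilde i - \tilde\gamma \ge 0$ and $\tilde j - \tilde d \ge 0$ by Corollary \ref{cor for second blow-up}, and for each term at least one of these exponents is strictly positive. The key point is that when $\tilde i - \tilde\gamma > 0$ we in fact have $\tilde i - \tilde\gamma \ge M$, since $\tilde i \ge \tilde n_j$ for the vertex $(\tilde n_j, m_j)$ below $(i,j)$ and $M$ is, by definition, the minimal positive gap $\tilde n_j - \tilde\gamma$ (capped at $1$); when $\tilde i - \tilde\gamma = 0$ then $\tilde j - \tilde d \ge 1 \ge M$. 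Therefore each monomial $|t|^{\tilde i - \tilde\gamma}|c|^{\tilde j - \tilde d}$ is bounded by $(\max\{|t|,|c|\})^M$ times a convergent power series in $|t|,|c|$, so that $|\eta(z,w)| \le C_2'\,(\max\{|t|,|c|\})^M$ on $U_r$ for small $r$. Evaluating at $f^n(z,w)$ and using Lemma \ref{lem1: d = 1} (which gives $|t| < r/2^n$ and $|c| < r/2^n$ at the image point) yields $|\eta(f^n(z,w))| < C_2 (r/2^n)^M$.

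The main obstacle is the bookkeeping in the $\eta$-estimate: one must verify carefully that every monomial index $(i,j)$ with $b_{ij}\ne 0$ feeds, after the affine change $(i,j)\mapsto(\tilde i,\tilde j)$, into an exponent pair that is bounded below by $M$ in the appropriate sense — i.e. that the ``strictly positive horizontal gap'' is genuinely $\ge M$ and not merely $>0$. This is exactly where the definition of $M$ for Case 4 is used, and where one must invoke that $\tilde i \ge \tilde n_j$ for the relevant lower vertex together with $\tilde n_j - \tilde\gamma \ge M$. For Cases 1, 2 and 3 one has $M=1$ and a single relevant inequality ($\tilde i - \tilde\gamma \ge 1$, or $\tilde j - \tilde d \ge 1$, or both), so the argument is strictly simpler; I would remark that these cases are subsumed by the Case 4 argument with the convention $M=1$. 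Everything else is routine: the holomorphy of $\zeta,\eta$ near $0$ with vanishing at the origin, and the convergence of the power series bounds on $U_r$, were already recorded in Sections 2--4.
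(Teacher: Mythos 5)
Your route is the same as the paper's: the $\zeta$-bound via $|\zeta(z)|\le C|z|$ together with Lemma \ref{lem1: d = 1} (the paper omits this as the easy half), and the $\eta$-bound in the coordinates $|w|=|z^{l_1}c|$, $|z|=|tc^{l_2^{-1}}|$, with the dichotomy $\tilde i=\tilde\gamma$ versus $\tilde i>\tilde\gamma$ and evaluation along the orbit where $|t|,|c|<r/2^n$. The one point where you add something to the paper is your justification of the claim that $\tilde i-\tilde\gamma\ge M$ whenever it is positive, and that justification has a gap: the vertex of $N(q)$ lying ``below'' $(i,j)$ may be an endpoint of the edge $L_{k-1}$, whose transformed gap $\tilde n_j-\tilde\gamma$ is $0$, so the inequality $\tilde i\ge\tilde n_j$ yields nothing; worse, when $l_1\notin\mathbb{Z}$ the per-term claim itself can fail for non-vertex support points. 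For instance, take vertices $(n_1,m_1)=(0,7)$, $(\gamma,d)=(5,1)$, $(n_3,m_3)=(10,0)$ and $\delta=4$, so that $T_2=2<\delta<T_1=7$, $l_1=5/6$ and $M=1$; a coefficient $b_{2,5}\neq 0$ is compatible with these vertices and gives $\tilde i-\tilde\gamma=(2-5)+\tfrac{5}{6}(5-1)=\tfrac13<M$. (The paper asserts the same per-term inequality without proof, so the issue is inherited, but in a self-contained write-up it must be addressed.)

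The lemma and your final estimate nevertheless survive, and the repair is short: the quantity that needs a lower bound is the total exponent $(\tilde i-\tilde\gamma)+(\tilde j-\tilde d)$, because after Lemma \ref{lem1: d = 1} both $|t|$ and $|c|$ are smaller than $r/2^n$ at $f^n(z,w)$, whence $|t|^{\tilde i-\tilde\gamma}|c|^{\tilde j-\tilde d}<(r/2^n)^{(\tilde i-\tilde\gamma)+(\tilde j-\tilde d)}$. For $d=1$ one checks that this total is at least $M$ for every $(i,j)\neq(\gamma,d)$ with $b_{ij}\neq0$: if $\tilde i=\tilde\gamma$ then $\tilde j-\tilde d=j-d\ge1$; if $j=d$ then $\tilde i-\tilde\gamma=i-\gamma\ge1$; if $j>d$ then $\tilde j-\tilde d\ge j-d\ge1$; and if $j=0<d$ then $\tilde j\ge\tilde d$ (Corollary \ref{cor for second blow-up}) forces $\tilde i-\tilde\gamma\ge l_2$, while $l_2=\tilde n_{k+1}-\tilde\gamma\ge M$ by the definition of $M$ (here $m_{k+1}=0$ since $d=1$). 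In the problematic terms the small power of $|t|$ is always compensated by a large power of $|c|$, which is exactly what this accounting records; with it, your evaluation along the orbit and the conclusion $|\eta(f^n)|<C_2(r/2^n)^M$ go through, and the $\zeta$-estimate is fine as you wrote it.
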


\begin{proof}
It is enough to consider Case 4
and show the later inequality.
Let $|w| = |z^{l_1}c|$ and $|z| = |tc^{l_2^{-1}}|$.
Then 
\[
|\eta|
= \left| \sum \frac{b_{ij} z^{i} w^{j}}{bz^{\gamma} w} \right| 
\leq \sum \frac{|b_{ij}|}{|b|} 
  |t|^{\tilde{i} - \tilde{\gamma}} |c|^{\tilde{j} - \tilde{d}},
\]
where 
$\tilde{i} \geq \tilde{\gamma}$ and $\tilde{j} \geq \tilde{d}$.
More precisely,
$\tilde{i} - \tilde{\gamma} \geq M$ if $\tilde{i} > \tilde{\gamma}$, and 
$\tilde{j} - \tilde{d} = j - d \geq 1$ if $\tilde{i} = \tilde{\gamma}$.
Hence there exist constants $A$ and $B$ such that
$|\eta| \leq A |t|^M + B |c|$
and so $|\eta| \leq A |t|^M + B |c|^M$.
It then follows from Lemma \ref{lem1: d = 1} that 
$|\eta (f^n)| < (A+B) (r/2^n)^M$ on $U_r$.
\end{proof}

Now we are ready to prove the uniform convergence of $\phi_n$.

\begin{prop}
If $d = 1$ and $\delta \neq T_k$ for any $k$,  
then $\phi_n$ converges uniformly to $\phi$ on $U_r$.
Moreover,
for any small $\varepsilon$, there is $r$ such that
$||\phi - id|| < \varepsilon ||id||$ on $U_r$.
\end{prop}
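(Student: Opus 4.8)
The plan is to mimic the telescoping estimate from Section 6.2, but with the geometric decay from Lemmas \ref{lem1: d = 1} and \ref{lem2: d = 1} replacing the $d \geq 2$ bound $\|F - F_0\| < \tilde\varepsilon$. First I would pass to the logarithmic lifts $F$, $F_0$, $\Phi_n$, $\Phi$ under $\pi(z,w) = (e^z, e^w)$, exactly as in Section 6.2, so that $\Phi_n = F_0^{-n} \circ F^n$ and, writing $(P_n, Q_n) = F^n$,
\[
\Phi_n(Z,W) = \left( \frac{1}{\delta^n} P_n(Z), \ \frac{1}{d^n} Q_n(Z,W) - \frac{\gamma_n}{\delta^n d^n} P_n(Z) \right).
\]
Since $d = 1$ here, the second component simplifies: $d^n = 1$, and $\gamma_n = \sum_{j=1}^n \delta^{n-j}\gamma$, so $\Phi_n^2 = Q_n - (\gamma_n/\delta^n) P_n$. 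The key point is that $P_{n+1} - \delta P_n = \log(1 + \zeta(p^n(z)))$ and $Q_{n+1} - (\gamma P_n + d Q_n) = \log(1 + \eta(f^n(z,w)))$, so the one-step differences are controlled by $|\zeta(p^n)|$ and $|\eta(f^n)|$, which by Lemma \ref{lem2: d = 1} are bounded by $C_1 r/2^n$ and $C_2 (r/2^n)^M$ on $U_r$.

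Next I would estimate the telescoping sums. For the first coordinate,
\[
|\Phi_{n+1}^1 - \Phi_n^1| = \frac{|P_{n+1} - \delta P_n|}{\delta^{n+1}} \leq \frac{|\log(1+\zeta(p^n))|}{\delta^{n+1}} < \frac{2 C_1 r}{\delta^{n+1} 2^n},
\]
which is summable with sum $O(r)$. For the second coordinate, since $d = 1$,
\[
|\Phi_{n+1}^2 - \Phi_n^2| \leq |Q_{n+1} - (\gamma P_n + Q_n)| + \frac{\gamma_{n+1}}{\delta^{n+1}}|P_{n+1} - \delta P_n| < 2 C_2 \left(\frac{r}{2^n}\right)^M + \frac{\gamma_{n+1}}{\delta^{n+1}} \cdot \frac{2 C_1 r}{2^n}.
\]
Because $\gamma_{n+1}/\delta^{n+1} = \sum_{j=1}^{n+1}\delta^{-j}\gamma$ is bounded (it converges to $\gamma/(\delta-1)$), both terms are summable in $n$; the first sums to $O(r^M)$ and the second to $O(r)$. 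Hence $\Phi_n$ is uniformly Cauchy on $\pi^{-1}(U_r)$ and converges uniformly to a holomorphic $\Phi$ with $\|\Phi - id\| = O(r^M) \to 0$ as $r \to 0$. Applying $\pi$ and the elementary inequality $|e^{z_1}/e^{z_2} - 1| \leq |z_1 - z_2|e^{|z_1 - z_2|}$ from Section 6.2, this transfers to uniform convergence of $\phi_n$ to $\phi$ on $U_r \setminus \{zw = 0\}$ with $\|\phi - id\| < \varepsilon\|id\|$ for $r$ small, and Riemann's extension theorem extends $\phi$ holomorphically across $\{zw=0\}$.

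The main obstacle, compared to the $d \geq 2$ case, is that here we genuinely need the refined decay rate $(r/2^n)^M$ rather than just a uniform smallness bound: when $d = 1$ the naive argument of Section 6.2 would leave a non-summable series in the second coordinate (there is no $1/d^{n+1} = 1/d^{n+1}$ gain to exploit since $d = 1$). The contraction $f^n(U_r) \subset U_{r/2^n}$ from Lemma \ref{lem1: d = 1}, together with the exponent $M$ capturing how fast $\eta$ vanishes as one approaches the axes, is exactly what makes $\sum_n |\eta(f^n)| < \infty$. I would also note that the case-by-case structure collapses: Cases 1, 2, 3 have $M = 1$ and are handled by the same (indeed simpler) estimates, while only Case 4 requires the genuinely smaller $M \in (0,1]$ introduced before Lemma \ref{lem2: d = 1}, so it suffices to write the argument for Case 4 and remark that the others follow a fortiori.
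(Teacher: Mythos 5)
Your proposal is correct and follows essentially the same route as the paper: lift to $\Phi_n = F_0^{-n}\circ F^n$, telescope, and control the one-step differences $|\Phi_{n+1}^2-\Phi_n^2|$ by $|\eta(f^n)| + \frac{\gamma}{\delta-1}|\zeta(p^n)| \lesssim (r/2^n)^M$ via Lemmas \ref{lem1: d = 1} and \ref{lem2: d = 1}, then transfer back through $\pi$ and Riemann's extension theorem. The paper's proof is just a more condensed version of the same estimate (it likewise reduces to Case 4, the other cases having $M=1$), so no further comment is needed.
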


\begin{proof}
Let $\Phi_n$ be the lift of $\phi_n$ 
and $\Phi_n = (\Phi_n^1, \Phi_n^2)$ as in Section 6.
It is enough to show the uniform convergence of $\Phi_n^2$.
By Lemma \ref{lem2: d = 1},
\[
|\Phi_{n+1}^2 - \Phi_n^2|
\leq \frac{|Q(F^n) - Q_0(F^n)|}{d^{n+1}} 
+ \frac{\gamma_{n+1} |P(P^n) - P_0(P^n)|}{\delta^{n+1} d^{n+1}} 
\]
\[
\leq |\eta \circ \pi (F^n)| + \frac{\gamma}{\delta - 1} |\zeta \circ \pi (P^n)|
< \left( C_2 + \frac{\gamma}{\delta - 1} C_1 \right) \left( \frac{r}{2^n} \right)^M.
\]
\end{proof}

The injectivity of $\phi$ follows from the same proof as the case $d \geq 2$,
which completes the proof of Theorem  \ref{main theorem for d=1}.

\begin{prop}
If $d = 1$ and $\delta \neq T_k$ for any $k$, 
then $\phi$ is injective on 
\[
\left\{ 
\frac{(1 + \varepsilon)^{2C}}{r^{l_2}} |z|^{l_1 + l_2} < |w| < \frac{r}{(1 + \varepsilon)^{2C}} |z|^{l_1}  
\right\}.
\]
\end{prop}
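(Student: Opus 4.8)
The plan is to repeat verbatim the injectivity argument from Section 6.3, since none of that reasoning used the hypothesis $d \geq 2$ in an essential way; the only place $d \geq 2$ entered the proof of Theorem \ref{main theorem} was in establishing the invariance and uniform convergence, and those have now been re-established for $d = 1$ under the hypothesis $\delta \neq T_k$ by Lemmas \ref{main lemma for d=1}, \ref{lem1: d = 1}, \ref{lem2: d = 1} and the preceding Proposition. First I would recall that, by Lemma \ref{main lemma for d=1} and the estimate $\|F - F_0\| < \tilde{\varepsilon}$ on $\pi^{-1}(U_r)$ (which follows from the former statement of the lemma, valid for all $d$), the lift $F$ of $f$ is defined and holomorphic on $V = \pi^{-1}(U_r)$. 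Then I would invoke Proposition \ref{biholo of lift} and Proposition \ref{biholo of lift} exactly as stated: the first component $P$ is injective on $H'$ by Rouch\'e's theorem, and the fibered second component $Q_Z$ is injective on $V'_Z$ for each fixed $Z$ by the same Rouch\'e argument applied in the $W$-variable.

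Next I would assemble these two fiberwise statements into the injectivity of $F$ on the region
\[
\left\{ (l_1 + l_2)\,\mathrm{Re}\,Z - l_2 \log r + 2C\tilde{\varepsilon} < \mathrm{Re}\,W < l_1\,\mathrm{Re}\,Z + \log r - 2C\tilde{\varepsilon} \right\},
\]
with $C = \max\{1/d, l_2/(2\delta)\}$ as before; this is immediate because $F$ is skew in the sense that the first coordinate of $F$ depends only on $Z$, so $F(Z_1,W_1) = F(Z_2,W_2)$ forces $P(Z_1) = P(Z_2)$, hence $Z_1 = Z_2$ by injectivity of $P$, and then $Q_{Z_1}(W_1) = Q_{Z_1}(W_2)$ forces $W_1 = W_2$. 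Since $F^n$, $\Phi_n = F_0^{-n} \circ F^n$ and the limit $\Phi$ are therefore all injective on that region, and since $\Phi \sim \mathrm{id}$ there by the uniform convergence estimate of the preceding Proposition, the injectivity of $\phi$ on the image region
\[
\left\{ \frac{(1+\varepsilon)^{2C}}{r^{l_2}}\, |z|^{l_1 + l_2} < |w| < \frac{r}{(1+\varepsilon)^{2C}}\, |z|^{l_1} \right\}
\]
follows by exponentiating via $\pi(z,w) = (e^z, e^w)$, exactly as in Proposition \ref{biholo of phi}.

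I would also note that one must check that $C = \max\{1/d, l_2/(2\delta)\}$ still makes sense when $d = 1$: indeed $1/d = 1$ is simply larger, so $C = \max\{1, l_2/(2\delta)\}$, and every step in the region bookkeeping goes through unchanged. The only subtlety worth a sentence is that Proposition \ref{biholo of lift} for $Q_Z$ uses that $\partial Q / \partial W$ is close to $d$ on the relevant strip, which for $d = 1$ means $Q_Z$ is close to a translation by $1$ in the $W$-direction; this is exactly the content of $\|F - F_0\| < \tilde{\varepsilon}$ restricted to a fixed fiber, so no new argument is needed. I expect no genuine obstacle here: the entire difficulty of the $d = 1$ case was front-loaded into the convergence of $\phi_n$ (Lemmas \ref{lem1: d = 1} and \ref{lem2: d = 1}), and injectivity is then a purely formal transcription of Section 6.3. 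The mildest point of care is simply making sure every appeal to "the same proof as the case $d \geq 2$" is legitimate, i.e.\ that each cited proposition's hypotheses ($F$ holomorphic on $V$, $\|F - F_0\|$ small, $\Phi \sim \mathrm{id}$) have all been verified for $d = 1$ under $\delta \neq T_k$ — which they have.
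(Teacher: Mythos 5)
Your proposal is correct and matches the paper exactly: the paper itself disposes of this proposition with the single remark that ``the injectivity of $\phi$ follows from the same proof as the case $d \geq 2$,'' i.e.\ the Rouch\'e argument of Section 6.3 applied to the lifts $P$ and $Q_Z$, which is precisely what you carry out, including the harmless observation that $C = \max\{1/d, l_2/(2\delta)\}$ remains valid for $d=1$.
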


Finally,
we exhibit an example that does not satisfies the additional condition. 

\begin{example}\label{example}
Let $f(z,w) = (z^2, z^{\gamma} w + z^{2 \gamma})$,
where $\gamma \geq 1$.
Then $\delta = T_1$, and
$f$ is semiconjugate to $g(z,w) = (z^2, w + 1)$ 
by $\pi (z,w) = (z, z^{\gamma} w): \pi \circ g = f \circ \pi$.
\end{example}

For this example,
Theorem \ref{main theorem for d=1} does not hold.
In fact,
if we had a B\"{o}ttcher coordinate that conjugates
$f$ to $f_0 (z,w) = (z^2, zw)$,
then $g$ should be conjugate to $g_0 (z,w) = (z^2, w)$.
However,
the translation $w \to w + 1$ can not be conjugate to the identity $w \to w$.
Also $f$ can not be conjugate to $f_0 (z,w) = (z^2, z^{2 \gamma})$, 
which is not dominant, on any open set.
This example is a generalization of Example 5.2 in \cite{u2} for the case $b = 1$.

\section{Uniqueness of B\"{o}ttcher coordinates}

In one dimension
the uniqueness of a B\"{o}ttcher coordinate is completely understood.
We obtained a similar result for polynomial skew products in \cite{u1}
with two suitable conditions.
The same argument works for Cases 1 and 2 if $d \geq 2$.

Let $p(z) = z^{\delta} + O(z^{\delta + 1})$,
a holomorphic germ with a superattracting fixed point at the origin,
and $p_0(z) = z^{\delta}$;
we assume that $a_{\delta} = 1$ for simplicity.
If we do not impose the condition $\varphi' (0) = 1$ on
the B\"{o}ttcher coordinate $\varphi$ for $p$,
then a B\"{o}ttcher coordinate $\varphi$ is unique 
up to multiplication by an $(\delta - 1)$st root of unity
as stated in \cite{m}.
Let $\varphi_1$ and $\varphi_2$ be conformal functions
that conjugate $p$ to $p_0$.
Then the composition $\varphi_2 \circ \varphi_1^{-1}$ conjugates $p_0$ to itself.
Hence we may assume that $p = p_0$
for the statement of the uniqueness of a change of coordinates.

\begin{lem}[\cite{m}]\label{uniq lem}
Let $\varphi$ be a conformal function defined on a neighborhood of the origin,
with $\varphi (0) = 0$,
that conjugates $p_0$ to itself.
Then $\varphi (z) = c_1 z$,
where $c_1^{\delta - 1} = 1$. 
\end{lem}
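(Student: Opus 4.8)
The plan is to write $\varphi$ as a convergent power series $\varphi(z) = c_1 z + c_2 z^2 + c_3 z^3 + \cdots$ near the origin (there is no constant term since $\varphi(0) = 0$), and to compare coefficients on both sides of the functional equation $\varphi(z^\delta) = \varphi(z)^\delta$, which expresses that $\varphi$ conjugates $p_0$ to itself. Since $\varphi$ is conformal at $0$, we have $c_1 \neq 0$.

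First I would extract the leading behavior. The left-hand side $\varphi(z^\delta)$ has lowest-order term $c_1 z^\delta$. The right-hand side $\varphi(z)^\delta = (c_1 z + c_2 z^2 + \cdots)^\delta$ has lowest-order term $c_1^\delta z^\delta$. Matching these gives $c_1 = c_1^\delta$, hence $c_1^{\delta-1} = 1$ as claimed. It remains to show $c_2 = c_3 = \cdots = 0$.

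The main step is an induction showing all higher coefficients vanish. Suppose $c_1 \neq 0$ but some later coefficient is nonzero; let $k \geq 2$ be the smallest index with $c_k \neq 0$, so $\varphi(z) = c_1 z + c_k z^k + O(z^{k+1})$. Then on the left, $\varphi(z^\delta) = c_1 z^\delta + c_k z^{\delta k} + O(z^{\delta(k+1)})$, so after the term $c_1 z^\delta$ the next possibly-nonzero term has degree $\delta k$. On the right, $\varphi(z)^\delta = (c_1 z)^\delta + \delta (c_1 z)^{\delta - 1} c_k z^k + O(z^{\delta + 2(k-1)}) = c_1^\delta z^\delta + \delta c_1^{\delta-1} c_k z^{\delta + k - 1} + \cdots$, so after $c_1^\delta z^\delta$ the next possibly-nonzero term has degree $\delta + k - 1$. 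Since $k \geq 2$ and $\delta \geq 2$, we have $\delta + k - 1 < \delta k$ (because $(\delta-1)(k-1) > 0$), so the coefficient of $z^{\delta + k - 1}$ on the right, namely $\delta c_1^{\delta-1} c_k = \delta c_k \neq 0$, has no counterpart on the left and must vanish — a contradiction. Hence no such $k$ exists and $\varphi(z) = c_1 z$.

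I do not expect a serious obstacle here; the only point requiring a little care is making the degree comparison $\delta + k - 1 < \delta k$ rigorous and confirming that intermediate terms of $\varphi(z)^\delta$ arising from products of lower-order factors cannot cancel the offending term — but since $c_k$ is the \emph{first} nonzero coefficient after $c_1$, every monomial of $\varphi(z)^\delta$ of degree strictly between $\delta$ and $\delta + k - 1$ necessarily involves only the factor $c_1 z$ raised to powers, contributing only to degree $\delta$, so there is genuinely nothing to cancel $\delta c_k z^{\delta+k-1}$. Convergence of the series is inherited from conformality of $\varphi$, so all manipulations are legitimate on a small disk.
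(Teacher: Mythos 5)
Your proof is correct and follows essentially the same route as the paper: expand $\varphi$ as a power series with $c_1\neq 0$, compare coefficients in $\varphi(z^{\delta})=\varphi(z)^{\delta}$ to get $c_1^{\delta}=c_1$, and kill the first potentially nonzero higher coefficient $c_k$ ($k\geq 2$). Your explicit degree comparison $\delta+k-1<\delta k$ just spells out the step the paper summarizes as ``$c_k=0$ because $\delta\geq 2$.''
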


\begin{proof}
We rewrite the proof of Theorem 9.1 in \cite{m}.
Since $\varphi$ is holomorphic at the origin,
it has the Taylor expression
$\varphi (z) = c_0 + c_1 z + c_2 z^2 + \cdots$.
Note that $c_0 = 0$ and $c_1 \neq 0$
since $\varphi (0) = 0$ and $\varphi$ is conformal.
Therefore, 
\[
\varphi (z) = c_1 z + c_2 z^2 + \cdots,
\]
where $c_1 \neq 0$.
Let $\varphi (z) = c_1 z + c_k z^k + \cdots$ for an integer $k \geq 2$.
The identity $\varphi (z^{\delta}) = \varphi (z)^{\delta}$ 
then implies that $c_1^{\delta} = c_1$ and $c_k = 0$
because $\delta \geq 2$. 
\end{proof}

Note that
the condition $\varphi (0) = 0$ in the lemma
can be replaced by the stronger condition $|\varphi| \sim |z|$ as $z \to 0$,
and that
$\varphi (z) = z^n$ conjugates $p_0$ to itself
for any integer $n \geq 1$,
although it is not conformal.

Let $f_0 (z,w) = ( z^{\delta}, z^{\gamma} w^{d})$,
where $\delta \geq 2$, $\gamma \geq 0$,
$d \geq 0$ and $\gamma + d \geq 2$.
By weakening the condition $\phi \sim id$ 
to the condition $|\phi| \sim |id|$,
we can generalize the lemma above to the skew product case as follows.

\begin{prop}\label{uniq prop} 
Let $\phi$ be a biholomorphic map defined on $U$
that conjugates $f_0$ to itself,
where $U$ is an open set of the form of Case 1 or Case 2.
Assume that $d \geq 2$,
that $\phi$ is a skew product of the form
$\phi (z,w) = (\phi_1 (z), \phi_2 (z,w))$
and that $|\phi| \sim |id|$ on $U_r$ as $r \to 0$.
Then $\phi (z,w) = (c_1 z, c_2 w)$, 
where $c_1^{\delta - 1} = 1$ and $c_1^{\gamma} c_2^{d - 1} = 1$. 
\end{prop}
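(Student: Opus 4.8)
The plan is to reduce the two-variable statement to two applications of the one-variable Lemma~\ref{uniq lem}, exploiting the skew-product structure. Since $\phi(z,w)=(\phi_1(z),\phi_2(z,w))$ conjugates $f_0$ to itself, reading off the first coordinate of $\phi\circ f_0 = f_0\circ\phi$ gives $\phi_1(z^\delta)=\phi_1(z)^\delta$, i.e.\ $\phi_1$ conjugates $p_0$ to itself. The only subtlety is that in Case~2 the open set $U$ is of the form $\{|z|<r,\ |w|<r|z|^{l_1}\}$, whose projection to the $z$-axis is a punctured disc rather than a full disc; but $|\phi|\sim|id|$ forces $|\phi_1(z)|\sim|z|$ as $z\to 0$, so $\phi_1$ extends holomorphically across $0$ by Riemann's extension theorem with $\phi_1(0)=0$, and $\phi_1'(0)\neq0$ since $|\phi_1(z)|\sim|z|$. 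Now Lemma~\ref{uniq lem} (in the form of the remark following it, with the hypothesis $\varphi(0)=0$ replaced by $|\varphi|\sim|z|$) yields $\phi_1(z)=c_1 z$ with $c_1^{\delta-1}=1$.

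**The second coordinate.** With $\phi_1(z)=c_1 z$ in hand, the second coordinate of the conjugacy equation reads
\[
\phi_2(z^\delta,\ z^\gamma w^d) \;=\; (c_1 z)^\gamma\,\phi_2(z,w)^d.
\]
Fix $z$ and regard this as an identity in $w$; better, I would first pin down the $z$-dependence of $\phi_2$. Write $\phi_2(z,w)=z^{\gamma/(d-1)}\,\psi(z,w)$ — this substitution is suggested by the fixed point of the functional equation $x = z^\gamma x^d$ in the monomial variables — or, more cleanly, proceed as in the one-variable proof by expanding $\phi_2$ in powers of $w$ with holomorphic coefficients in $z$ on $\{|w|<r|z|^{l_1}\}$ and comparing both sides of the displayed identity term by term. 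The condition $d\geq 2$ is what makes this rigid: exactly as in Lemma~\ref{uniq lem}, where $\delta\geq2$ kills all higher-order Taylor coefficients, here $d\geq2$ forces $\phi_2$ to be a monomial $c_2(z) w$ (no higher powers of $w$ survive), and then the identity $c_2(z^\delta)\, w^d \cdot z^{?} = \dots$ combined with $\phi_1(z)=c_1z$ forces $c_2(z)$ to be a constant $c_2$ and to satisfy $c_1^\gamma c_2^{d-1}=1$.

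**Main obstacle.** The delicate point is the bookkeeping for the $z$-dependence of $\phi_2$ and making sure one is allowed to compare coefficients: the domain $U$ is a wedge, not a polydisc, so $\phi_2$ is a priori only holomorphic there, and I need $|\phi|\sim|id|$ to control its growth well enough to conclude that, after the natural substitution, the relevant function extends across $\{zw=0\}$ and its Taylor/Laurent expansion is governed by the functional equation. Concretely, I expect the hardest step to be showing that $\phi_2(z,w)/w$ extends holomorphically (and nonvanishingly) across the locus $w=0$ and that its restriction to each fiber $\{z=\text{const}\}$ is forced to be constant — this is the genuine two-variable analogue of ``$c_k=0$ for $k\geq2$'' and is where the skew-product hypothesis, the form of $U$, and the asymptotic $|\phi|\sim|id|$ all have to be used together. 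Once that is established, the constraints $c_1^{\delta-1}=1$ and $c_1^\gamma c_2^{d-1}=1$ drop out of substituting the monomial forms back into $\phi\circ f_0=f_0\circ\phi$ and matching exponents and coefficients.
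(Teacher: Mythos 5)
Your plan is essentially the paper's own proof: $\phi_1(z)=c_1z$ from the one-variable Lemma~\ref{uniq lem} (your Riemann-extension remark for Case~2 is exactly the intended use of the hypothesis $|\phi|\sim|id|$ there), followed by a fiberwise power-series argument for $\phi_2$ in the conjugacy identity $\phi_2(z^{\delta},z^{\gamma}w^{d})=(c_1z)^{\gamma}\phi_2(z,w)^{d}$. Two corrections on the second coordinate, which you leave mostly as a plan. First, the step you single out as the main obstacle is in fact immediate: in Cases 1 and 2 the fiber of $U$ over each $z\neq 0$ is a full disc $\{|w|<r|z|^{l_1}\}$ containing $w=0$, so $\phi_2(z,\cdot)$ already has a Taylor expansion $c_0(z)+c_1(z)w+c_2(z)w^{2}+\cdots$; boundedness of $|\phi_2/w|$ (from $|\phi_2|\sim|w|$) gives $c_0\equiv 0$, and if $c_k\not\equiv 0$ for some smallest $k\geq 2$, comparing the coefficient of $w^{d-1+k}$ (which satisfies $d<d-1+k<kd$ since $d,k\geq 2$, so no such power occurs on the left-hand side) yields $d\,c(z)^{d-1}c_k(z)(c_1z)^{\gamma}\equiv 0$, a contradiction; no extension across $\{w=0\}$ beyond this is needed.

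Second, the genuine gap: your claim that the resulting identity together with $\phi_1(z)=c_1z$ ``forces $c_2(z)$ to be a constant'' is false as stated when $\delta=d$. The functional equation one obtains is $c(z^{\delta})=c_1^{\gamma}c(z)^{d}$, and for $\delta=d$ every $c(z)=c_2z^{n}$ with $c_1^{\gamma}c_2^{d-1}=1$ satisfies it; indeed $(z,w)\mapsto(c_1z,c_2z^{n}w)$ conjugates $f_0$ to itself in that case (this is precisely the remark following the proposition in the paper). Comparing lowest-order terms only gives $n(\delta-d)=0$, so constancy follows from the equation alone only when $\delta\neq d$; to exclude $n\geq 1$ in general you must invoke $|\phi_2|\sim|w|$ once more at this final step. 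You gesture at this (``all have to be used together''), but the concrete sentence omits it, and this is exactly where the hypothesis does irreplaceable work. Also drop the tentative substitution $\phi_2=z^{\gamma/(d-1)}\psi$: the exponent $\gamma/(d-1)$ need not be an integer, and it is not needed for the clean argument above.
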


\begin{proof}
It follows from Lemma {\rmfamily \ref{uniq lem}}
that $\phi_1(z) = c_1 z$,
where $c_1^{\delta - 1} = 1$.
Because $U$ intersects the $z$-axis
for Cases 1 and 2,
$\phi_2$ is holomorphic at the origin in $w$ direction
for any fixed $z$,
and so it has the Taylor expression
$\phi_2 (z,w) = c_0(z) + c_1(z) w + c_2(z) w^2 + \cdots$
on the fiber.
Since $|\phi_2| \sim |w|$,
the ratio $|\phi_2 /w|$ should be bounded on $U$,
which implies that $c_0(z) \equiv 0$.
On the other hand, 
$c_1(z) \neq 0$
since $\phi_2$ is conformal at $w = 0$.
Therefore, 
\[
\phi_2 (z,w) = c_1(z) w + c_2(z) w^2 + \cdots,
\]
where $c_1(z) \neq 0$.
Let $\phi_2 (z,w) = c(z) w + c_k(z) w^k + \cdots$
for an integer $k \geq 2$.
The identity $\phi_2 \circ f_0 = \phi_1^{\gamma} \phi_2^{d}$ implies that
\[
c(z^{\delta}) z^{\gamma} w^d + c_k(z^{\delta}) z^{k \gamma} w^{kd} + \cdots
\]
\[
= (c_1 z)^{\gamma} \{ c(z)^d w^d + d c(z)^{d-1} c_k(z) w^{d-1 + k} + \cdots \}.
\]
Hence
$c(z^{\delta}) = c_1^{\gamma} c(z)^d$ and $c_k(z) \equiv 0$
because $d \geq 2$.
Let $c(z) = c_2 z^n + O(z^{n+1})$ for an integer $n \geq 0$.
The identity $c(z^{\delta}) = c_1^{\gamma} c(z)^d$
implies that $c(z) = c_2$ if $\delta \neq d$
and $c(z) = c_2 z^n$ if $\delta = d$,
where $c_1^{\gamma} c_2^{d - 1} = 1$.
Let $\phi_2 (z,w) = c_2 z^n w$.
Then $n = 0$
since $|\phi_2| \sim |w|$.
\end{proof}

\begin{rem}
If we replace the condition $|\phi| \sim |id|$ 
in the proposition to the condition that 
$\phi$ preserves the $z$-axis and $w$-axis, respectively,
then we have the other possibility:
for any integer $n \geq 0$,
the map $\phi (z,w) = (c_1 z, c_2 z^n w)$
is biholomorphic on $U$ for Case 2 and
conjugates $f_0$ to itself if $\delta = d$.
\end{rem}

Whereas
we can use the Taylor expression of $\phi_2$ on the fibers
for Cases 1 and 2
since $U$ intersects the $z$-axis,
we can only use the Laurent expression of $\phi_2$ 
for Cases 3 and 4
since $U$ is disjoint from the $z$-axis,
and so the same argument does not work.

\section{Extension of B\"{o}ttcher coordinates}

In one dimension 
there is a complete statement in dynamical viewpoint
on the extension of the B\"{o}ttcher coordinate $\varphi$
of a global holomorphic function $p$
with a superattracting fixed point at the origin;
see Theorem 9.3 in \cite{m}.
Roughly speaking,
$\varphi$ extends until it meets the other critical points of $p$ than the origin.
We obtained a similar statement  
for polynomial skew products in \cite{u1};
the B\"{o}ttcher coordinate near infinity for a polynomial skew product
extends until it meets the critical set of the polynomial map.
The situation in this paper is more or less different from that in \cite{u1};
the most major difference is that
we permit the critical set of $f$ to intersect $U$ and/or $V$, 
the region where the B\"{o}ttcher coordinate $\phi$ will be extended,
in the $z$-axis and $w$-axis. 
However,
the almost same arguments including analytic continuation work 
outside the $z$-axis and $w$-axis,
and we manage to obtain a similar result
thanks to Riemann's and Hartogs' extension theorems;
$\phi$ extends
until it meets the other critical set of $f$ than the $z$-axis and $w$-axis.

Let $f$ be defined globally in this section;
for example,
let $f$ be a holomorphic skew product
defined on $\{ |z| < R \} \times \mathbb{C}$
for large enough $R > 0$.
We assume that $f$ has a superattracting fixed point at the origin
and satisfies the conditions in Theorems \ref{main theorem} or \ref{main theorem for d=1}
so that it has the B\"{o}ttcher coordinate $\phi$ on $U$.
Let $\psi = (\psi_1, \psi_2)$ be the inverse of $\phi$.
Because $\phi \sim id$,
we may say that $\psi$ is biholomorphic on $U$.  
Our aim in this section is actually to extend $\psi$ from $U$ to a larger region $V$.

We first consider the dynamics of the monomial map $f_0$ in Section 9.1;
in particular,
we calculate the union $A_{f_0}$ of all the preimages of $U$ under $f_0$.
Then we provide a reasonable definition of $V$ in Section 9.2,
which is included in $A_{f_0} \cup \{ zw = 0 \}$.
Finally,
we state our result precisely and prove it in Section 9.3.

\subsection{Monomial maps}

Let $f_0 (z,w) = (z^{\delta}, z^{\gamma} w^{d})$,
where $\delta \geq 2$, $\gamma \geq 0$,
$d \geq 1$ and $\gamma + d \geq 2$;
we assume that the coefficients are both $1$ for simplicity.
It has a superattracting fixed point at the origin.

We first emphasize that 
the $z$-axis and $w$-axis are special curves in the following senses:
$(i)$ 
the critical set $C_{f_0}$ of $f_0$ is included in the $z$-axis and $w$-axis;
more precisely,
$C_{f_0} = \{ zw = 0 \}$ if $d \geq 2$, and
$C_{f_0} = \{ z = 0 \}$ if $d = 1$, and
$(ii)$ 
they are forward $f_0$-invariant,
respectively.
In particular, $f_0$ is rigid.

Next we calculate the union of the all preimages of $U$ under $f_0$.
Let
\[
A_{f_0} = A_{f_0} (U) = \bigcup_{n \geq 0} f_0^{-n} (U),
\]
which is included in the attracting basin of the origin for $f_0$.
The affine function 
\[
R(a) = \frac{\delta a - \gamma}{d}
\]
plays a central role to calculate $A_{f_0}$.
Note that $f_0^{-n} (U)$ is equal to
\begin{enumerate}
\item $\{ |z| < r^{{1/\delta}^n}, |w| < r^{1/d^{n}} |z|^{R^n(0)} \}$ for Case 1,
\item $\{ |z| < r^{1/{\delta}^{n}}, 0 < |w| < r^{1/d^{n}} |z|^{R^n(l_1)} \}$ for Case 2,
\item $\{ r^{- l_2 /d^{n}} |z|^{R^n(l_2)} < |w| < r^{1/d^{n}} |z|^{R^n(0)} \}$ for Case 3, and
\item $\{ 0 < r^{- l_2 /d^{n}} |z|^{R^n(l_1 + l_2)} < |w| < r^{1/d^{n}} |z|^{R^n(l_1)} \}$ for Case 4.
\end{enumerate}
If $\delta \neq d$,
then 
\[
R(a) = \frac{\delta}{d} (a - \alpha_0) + \alpha_0
\text{ and so }
R^n (a) = \left( \frac{\delta}{d} \right)^n (a - \alpha_0) + \alpha_0,
\]
where $\alpha_0 = \gamma /(\delta - d)$.
Therefore,
for Case 1, the set $A_{f_0}$ is equal to
\begin{enumerate}
\item[(i)] $\{ |z| < 1 \}$ if $\delta \geq d$ and $\gamma > 0$,
\item[(ii)] $\{ |z| < 1, |w| < |z|^{\alpha_0} \}$ if $\delta < d$ and $\gamma > 0$, where $\alpha_0 < 0$, or
\item[(iii)] $\{ |z| < 1, |w| < 1 \}$ if $\gamma = 0$.
\end{enumerate}
For Case 2, the inequalities $T_s \geq \delta$ and $\gamma > 0$ hold and $A_{f_0}$ is equal to
\begin{enumerate}
\item[(i)] $\{ |z| < 1, |w| < |z|^{l_1} \}$ if $T_s = \delta > d \geq 2$, 
\item[(ii)] $\{ |z| < 1, |w| < r|z|^{l_1} \}$ if $T_s = \delta > d = 1$, 
\item[(iii)] $\{ 0 < |z| < 1 \}$ if $T_s > \delta \geq d$, or
\item[(iv)] $\{ 0 < |z| < 1, |w| < |z|^{\alpha_0} \}$ if $\delta < d$, where $\alpha_0 < 0$.
\end{enumerate}
For Case 3, the inequalities $\delta \geq T_1 \geq d$ hold and $A_{f_0}$ is equal to
\begin{enumerate}
\item[(i)] $\{ |z| < 1, w \neq 0 \}$ if $\delta > d$ and $\gamma > 0$,
\item[(ii)] $\{ |z| < 1, 0 < |w| < 1 $ if $\delta > d$ and $\gamma = 0$, or
\item[(iii)] $\{ |z|^{l_2} < |w| < 1 \}$ if $\delta = d$ (and $\gamma = 0$).
\end{enumerate}
For Case 4, 
the inequalities $T_{k-1} \geq \delta \geq T_k > d$ and $\gamma > 0$ hold and $A_{f_0}$ is 
\begin{enumerate}
\item[(i)] $\{ |z| < 1, 0 < |w| < |z|^{l_1} \}$ if $T_{k-1} = \delta > T_k > d \geq 2$, 
\item[(ii)] $\{ |z| < 1, 0 < |w| < r|z|^{l_1} \}$ if $T_{k-1} = \delta > T_k > d = 1$, 
\item[(iii)] $\{ 0 < |z| < 1, w \neq 0 \}$ if $T_{k-1} > \delta > T_k$, 
\item[(iv)] $\{ 0 < |z| < 1, |z|^{l_1 + l_2} < |w| \}$ if $T_{k-1} > \delta = T_k > d \geq 2$, or
\item[(v)] $\{ 0 < |z| < 1, r^{-l_2} |z|^{l_1 + l_2} < |w| \}$ if $T_{k-1} > \delta = T_k > d = 1$.
\end{enumerate}
Note that
$A_{f_0}$ does not intersect the $z$-axis and/or $w$-axis
but $\overline{A_{f_0}}$ does
for many cases,
and that
$A_{f_0} \subset int \overline{A_{f_0}} \subset A_{f_0} \cup \{ zw = 0 \}$
for all cases.

\subsection{Definition of $V$}

We require the region $V$ to be simply connected, Reinhardt domain, 
and included in $A_{f_0} \cup \{ zw = 0 \}$.
More specifically,
we define
\[
V = \{ r_2^{-1} |z|^{a_2} < |w| < r_1 |z|^{a_1} \},
\]
where $r \leq r_1 \leq 1$, $r_2 \leq 1$ and
$- \infty \leq a_1 \leq l_1 \leq l_1 + l_2 \leq a_2 \leq \infty$.
We assume that
$U \subset V \subset int \overline{A_{f_0}}$. 
Then $V \setminus \{ zw = 0 \} \subset A_{f_0}$
and hence 
we use analytic continuation
outside the $z$-axis and $w$-axis.
The region $V$ realizes all the types of $int \overline{A_{f_0}}$
for suitable choices of the parameters $r_1$, $r_2$, $a_1$ and $a_2$.

\begin{rem}
It might seem to be natural to define
\[
V = \{ r_2^{-a_2} |z|^{a_1 + a_2} < |w| < r_1 |z|^{a_1} \},
\]
where $r \leq r_1 \leq 1$, $r \leq r_2 \leq 1$,
$- \infty \leq a_1 \leq l_1$ and
$l_2 \leq a_2 \leq \infty$.
However, 
if $a_1 = - \infty$ and $a_2 = \infty$,
then we can not compute $a_1 + a_2$.
In particular,
we have to set $a_1 = - \infty$ and $a_1 + a_2 = l_1 + l_2$
to realize $\{ |z| < 1, |z|^{l_1 + l_2} < |w| \}$,
the union of $A_{f_0}$ and the $w$-axis for Case 4 when $\delta = T_k$,
by $V$.
\end{rem}

\begin{rem}
We do not need to assume that $V \subset A_{f_0}$
because we show in the next subsection that,
if $\psi$ extends to a biholomorphic map on $V \setminus \{ zw = 0 \}$,
then it extends to a biholomorphic map on $V$.
This is different from \cite{u1};
the corresponding region $V_r^a$ in \cite{u1} is included in
the union $A_{f_0}^{\alpha}$ of all the preimages of the given open set $V_R$. 
Moreover,
we have two parameters $a_1$ and $a_2$ of weights in the definition of $V$,
whereas we needed only one parameter $a$ in \cite{u1}.
\end{rem}

\subsection{Statement and Proof}

As mentioned in Section 9.1,
the critical set $C_{f_0}$ of $f_0$ is included in the $z$-axis and $w$-axis.
Hence
$C_{f_0}$ may intersect $U$ and/or $V$,
unlike the situation in \cite{u1}.
In that case, 
$f$ is also expected to have the critical set $C_f$ in the $z$-axis and $w$-axis;
in fact, 
$\phi$ and so $\psi$ preserve the $z$-axis and/or $w$-axis
if they are defined there,
and $f$ has the critical set there.
Therefore,
we permit $C_f$ to intersect $U$ and/or $V$
in the $z$-axis and $w$-axis,
and use analytic continuation
outside the $z$-axis and $w$-axis
even for the case $V \subset A_{f_0}$.
Let 
\[
A_f = A_f (
U) = \bigcup_{n \geq 0} f^{-n} (U),
\]
which is included in the attracting basin of the origin for $f$.
Let $|\phi| = (|\phi_1|, |\phi_2|)$, 
which extends to a continuous map from $A_f$ to $\mathbb{R}^{2}$
via $(f_0 |_{\mathbb{R}^{2}})^{-n} \circ |\phi| \circ f^n$.

\begin{thm}
Let $f$ be defined globally. 
If $f$ has no critical points in $|\phi|^{-1} (V \cap \mathbb{R}_{>0}^{2})$,
then $\psi$ extends by analytic continuation
to a biholomorphic map on $V$.
\end{thm}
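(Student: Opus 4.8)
The plan is to extend $\psi$ fiberwise by the standard pull-back/push-forward trick used in the one-dimensional theory and in \cite{u1}, first on the locus away from the axes where $f$ is a local biholomorphism, and then to absorb the axes using Riemann's and Hartogs' extension theorems. Concretely, for a point $(z_0,w_0)\in V\setminus\{zw=0\}$ we know $(z_0,w_0)\in A_f$, so some iterate $f^N$ maps it into $U$, where $\psi=\phi^{-1}$ is already defined; the candidate extension is $\psi=f_0^{-N}\circ\psi\circ f^N$, with the branch of $f_0^{-N}$ selected so that $|\psi|$ agrees with the continuous map $|\phi|^{-1}=(f_0|_{\mathbb{R}^2})^{-n}\circ|\phi|\circ f^n$ introduced just before the theorem. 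First I would verify this is well-defined: that the formula is independent of the choice of $N$ (it satisfies the functional equation $\psi\circ f_0=f\circ\psi$, so increasing $N$ by one and composing with the matching branch of $f^{-1}$ on the $f$-side returns the same value), and that analytic continuation along paths in $V\setminus\{zw=0\}$ is unobstructed — this is exactly where the hypothesis that $f$ has no critical points in $|\phi|^{-1}(V\cap\mathbb{R}_{>0}^2)$ is used, since it guarantees the relevant branches of $f^{-n}$ and of $f_0^{-n}$ stay holomorphic and single-valued. Because $V$ is simply connected (a Reinhardt logarithmic wedge), monodromy vanishes and the continuation defines a single-valued holomorphic map on $V\setminus\{zw=0\}$ that restricts to $\psi$ on $U\setminus\{zw=0\}$.

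Next I would check that the extended $\psi$ is biholomorphic onto its image on $V\setminus\{zw=0\}$. Injectivity: if $\psi(z_1,w_1)=\psi(z_2,w_2)$, push forward by $f_0^N$ for $N$ large enough that both points land in the region where $\psi$ was already known injective (using $f_0^N\circ\psi=\psi\circ f^N$ and, on the source side, that $f^N$ is locally injective off its critical set, which by hypothesis misses our locus); then $f^N(z_1,w_1)=f^N(z_2,w_2)$, and invoking injectivity of $f$ off $C_f$ iteratively forces $(z_1,w_1)=(z_2,w_2)$. The inverse is holomorphic because the local inverse is built from local inverses of $f_0$ (holomorphic off $\{zw=0\}$) composed with $\phi$ and with local inverses of $f$ (holomorphic where $Df$ is invertible, i.e. off $C_f$, which again the hypothesis keeps us away from). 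So on $V\setminus\{zw=0\}$ we have a biholomorphism conjugating $f$ to $f_0$.

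To pass from $V\setminus\{zw=0\}$ to all of $V$, I would first bound $\psi$: since $|\psi|=|\phi|^{-1}$ extends continuously to $A_f\supset V\setminus\{zw=0\}$ and $V$ has compact closure in the relevant sense (or at least $|\psi|$ stays locally bounded, because $V\subset\operatorname{int}\overline{A_{f_0}}$ forces $|\phi|$ to stay in a region where $f_0^{-n}$ contracts controllably), each component of $\psi$ is a bounded holomorphic function near the removed axes. Riemann's extension theorem then extends $\psi$ across the smooth part of $\{z=0\}\cup\{w=0\}$, and Hartogs' extension theorem handles the codimension-two locus $\{z=w=0\}$; this is precisely the point made in the introduction to Section 9 and in the second remark of Section 9.2, so I would cite those. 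Finally one checks the extended $\psi$ is still a biholomorphism on all of $V$: the conjugacy equation $\psi\circ f_0=f\circ\psi$ persists by continuity, and $\psi$ maps the axes into the axes (where $f$ and $f_0$ have the same local monomial form up to the $\zeta,\eta$ corrections), so $Df\cdot D\psi=D\psi\cdot Df_0$ forces $D\psi$ invertible there as well, whence $\psi$ is biholomorphic on $V$.

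The main obstacle I anticipate is the well-definedness and monodromy-freeness of the analytic continuation in the first step — specifically, keeping precise track of \emph{which} branch of $f_0^{-n}$ is used and proving the chosen branches are mutually compatible as $n$ varies and as the base path moves. The clean way to organize this is to work in logarithmic coordinates (as in Section 6, via $\pi(z,w)=(e^z,e^w)$), where $f_0$ lifts to the \emph{linear} map $F_0(Z,W)=(\delta Z,\gamma Z+dW)$, so $F_0^{-n}$ is genuinely single-valued and the only branching comes from $\log(1+\zeta)$, $\log(1+\eta)$; on the lift $V$ is a convex half-plane-type region, simple connectivity is transparent, and the argument reduces to the estimates already established in Lemma \ref{main lemma} together with the no-critical-points hypothesis. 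The remaining bookkeeping — matching the real part of the lift to $\log|\phi|^{-1}$ to pin down the branch — is then routine.
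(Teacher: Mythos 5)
Your overall strategy (continue $\psi$ along paths using the dynamics and the no-critical-point hypothesis, then cross the axes with Riemann and Hartogs) is the paper's strategy, but the central step is misconfigured. Since $\psi=\phi^{-1}$ satisfies $\psi\circ f_0=f\circ\psi$ and $V$ is chosen inside $int\,\overline{A_{f_0}}$, what you actually know about a point $y\in V\setminus\{zw=0\}$ is that $f_0^n(y)\in U$ for some $n$, i.e.\ $y\in A_{f_0}$; the inclusion $V\setminus\{zw=0\}\subset A_f$ that you assert at the outset is not known and is not part of the hypotheses --- indeed $\psi(V\setminus\{zw=0\})\subset A_f$ is a \emph{consequence} of the construction, not an input. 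Accordingly the continuation formula must be $f^{-n}\circ\psi\circ f_0^{\,n}$, with the branch of $f^{-n}$ furnished by the hypothesis that $f$ has no critical points in $|\phi|^{-1}(V\cap\mathbb{R}_{>0}^2)$; your formula $f_0^{-N}\circ\psi\circ f^{N}$ is the continuation formula for $\phi$, not $\psi$, and does not even restrict to $\psi$ on $U$. Moreover, single-valuedness cannot be obtained from simple connectivity of $V$: the continuation necessarily takes place on $\mathcal{V}=V\setminus\{zw=0\}$, which is \emph{not} simply connected. The paper handles monodromy by showing that any loop in $\mathcal{V}$ can be deformed within $\mathcal{V}$ to a loop in $\mathcal{U}=U\setminus\{zw=0\}$, where $\psi$ is already defined, so the continuations along two paths agree.

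Your injectivity argument also fails: neither $f_0^N$ nor $f^N$ is injective on these Reinhardt wedges (for instance $(z,w)\mapsto(-z,-w)$ preserves $U_r$ and is collapsed by $f_0$ for suitable exponents), so from $\psi(x_1)=\psi(x_2)$ you can only conclude $f_0^N x_1=f_0^N x_2$, and "injectivity of $f$ off $C_f$" is a local statement that does not let you undo the iteration. The paper instead argues by contradiction: $|\phi\circ\psi|=|id|$ forces $|z_1|=|z_2|$ and $|w_1|=|w_2|$, and then a minimality argument on $|z_1|$ (and on $|w_1|$ within the annular fibers of $\mathcal{V}\setminus\mathcal{U}$), combined with openness and local injectivity of $\psi$, rules out any coincidence. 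Finally, the boundedness of $\psi_2$ near the axes needs a genuine argument --- the paper restricts $\psi_2$ to vertical fibers (and horizontal lines), and uses that a homeomorphic image of a punctured disk, with $\psi_2\sim w$ on the part inside $U$, is enclosed by the image of its outer boundary circle --- rather than your appeal to the continuous extension of $|\phi|$ to $A_f$, which again presupposes control of $A_f$. Your concluding Riemann--Hartogs step is the right one and matches the paper, but the three points above (orientation of the continuation, monodromy, injectivity) are real gaps as written.
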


\begin{proof} 
We first show that 
$\psi$ extends to a holomorphic map on $V \setminus \{ zw = 0 \}$ by analytic continuation.
Let $\mathcal{U} = U \setminus \{ zw = 0 \}$ 
and $\mathcal{V} = V \setminus \{ zw = 0 \}$.
Take any points $x$ in $\mathcal{U}$ and $y$ in $\mathcal{V}$.
Connect $x$ and $y$ by a path $\Gamma$ in $\mathcal{V}$.
Since $\mathcal{V} \subset A_{f_0} (U)$,
there is an integer $n$ such that
$f_0^{n}$ maps $\Gamma$ into $U$.
In fact,
$f_0^{n} (\Gamma) \subset \mathcal{U}$.
Since $f$ has no critical points 
in $|\phi|^{-1} (\mathcal{V} \cap \mathbb{R}^{2})$, 
for any point in $\psi (f_0^{n} (\Gamma))$,
there is an open neighborhood $N$ in $\psi (\mathcal{U})$ such that
each branch of $f^{-n}$ from $N$ 
into $|\phi|^{-1} (\mathcal{V} \cap \mathbb{R}^{2})$ is well-defined. 
Because $\psi (f_0^{n} (\Gamma))$ is compact,
it is covered by finitely many open neighborhoods.
Pulling back this finite open covering by $\psi$ and $f_0^{n}$, 
we can take a finite open covering $\{ N_j \}_{j = 0}^{s}$ of $\Gamma$,
where $N_0$ and $N_s$ contain $x$ and $y$ respectively,
such that 
any branch of $f^{-n}$ on $\psi (f_0^{n} (N_j))$ 
is well-defined for any $j$.
We show that $\psi$ extends along $\Gamma$ 
by defining $f^{-n} \circ \psi \circ f_0^{n}$ approximately.
We may assume that $N_0 \subset \mathcal{U}$,
and we can define $f^{-n} \circ \psi \circ f_0^{n}$ on $N_0$ as $\psi$.
We next choose the branch of $f^{-n}$ on $\psi (f_0^{n} (N_1))$
such that $f^{-n} \circ \psi \circ f_0^{n}$ coincides with $\psi$ on $N_0 \cap N_1$.
Continuing this construction,
we can define a holomorphic map $f^{-n} \circ \psi \circ f_0^{n}$ on $N_j$
for any $j$ inductively;
thus we get the analytic continuation of $\psi$ along the path $\Gamma$.
Although $\mathcal{V}$ is not simply connected,
this analytic continuation does not depend on   
the base point $x$ and the path $\Gamma$
because $\psi$ is already defined on $U$.
Let us explain more precisely why it is independent of the choice of the path. 
Let $\Gamma_1$ and $\Gamma_2$ be two paths in $\mathcal{V}$ connecting $x$ and $y$.
Then the loop $\Gamma_2 \circ \Gamma_1^{-1}$ moves continuously in $\mathcal{V}$ to a loop in $\mathcal{U}$.
Therefore,
the analytic continuations along $\Gamma_1$ and $\Gamma_2$ 
have to coincide at the point $y$
because $\psi$ is already defined on $\mathcal{U}$.

Next we show that $\psi$ is homeomorphism on $\mathcal{V}$.
By the constriction of $\psi$, 
it is locally one-to-one,
and the set of all pairs $x_1 = (z_1,w_1) \neq x_2 = (z_2,w_2)$
with $\psi (x_1) = \psi (x_2)$ forms a closed subset of $\mathcal{V} \times \mathcal{V}$.
If $\psi (x_1) = \psi (x_2)$,
then $|z_1|=|z_2|$ and $|w_1|=|w_2|$ 
because $|\phi \circ \psi|=|id|$.
Assuming that there were such a pair with $\psi (x_1) = \psi (x_2)$,
we derive a contradiction.
There are two cases:
the minimum of $|z_1|$ exists or not.
First,
assume that the minimum exists,
which is positive.
Since $\psi$ is an open map,
for any $x'_1$ sufficiently close to $x_1$ 
we can choose $x'_2$ close to $x_2$ 
with $\psi (x'_1) = \psi (x'_2)$.
In particular,
we can choose $x'_j$ with $|z'_j| < |z_j|$,
which contradicts the choice of $z_j$.
Next,
assume that the minimum does not exist.
Then there is a pair with $0 < |z_1| = |z_2| < r$.
Fix such $z_1$.
For Cases 1 and 2,
the intersection of $\mathcal{V} \setminus \mathcal{U}$ and the fiber at $z_1$ is an annulus, 
and we can choose $|w_1|$ as minimal.
Using the same argument as above to the fibers,
we can choose $x'_1 = (z_1,w'_1)$ and $x'_2 = (z_2,w'_2)$
so that $\psi (x'_1) = \psi (x'_2)$ and $|w'_j| < |w_j|$,
which contradicts the choice of $w_j$.
For Cases 3 and 4,
the intersection may consist of two annuluses.
For this case,
we can choose $|w_1|$ as minimal in the outer annulus
or as maiximal in the inner annulus,
which contradicts the choice of $w_j$
by the same argument as above.

Finally,
we show that $\psi$ extends to a biholomorphic map on $V$.
It is well known that $\psi_1$ is well-defined and holomorphic at $z = 0$
and, more precisely,
$\psi_1 (0) = 0$.
We want to show that
$\psi_2$ extends holomorphically from $\mathcal{V}$ to $V$.
Then, clearly,
$\psi$ is biholomorphic on $V$
since it is biholomorphic on $\mathcal{V}$.
Case 1 is rather easy;
since $\psi_2$ is holomorphic on $U \cup \mathcal{V}$,
where $U$ is a neighborhood of the origin, 
it extends to a holomorphic map on $V$
thanks to Hartogs' extension theorem.
The other cases need another argument
since $U$ is not a neighborhood of the origin.
For Case 3,
assuming that $V$ intersects the $z$-axis,
we show that $\psi_2$ is bounded on $\mathcal{N}$, 
where $\mathcal{N} = N \setminus \{ w = 0 \}$
and $N = \{ |z|<r^{1 + l_2^{-1}}, |w|<r \}$.
Note that $U \subset \mathcal{N} \subset N \subset V$. 
Fix $z_0$ such that $|z_0| < r^{1 + l_2^{-2}}$ and
let $\mathbb{C}_{z_0} = \{ z_0 \} \times \mathbb{C}$.
Define $h (w) = \psi_2 (z_0, w)$;
that is,
we restrict the map to the vertical fiber.
Because $h \sim w$ on $U \cap \mathbb{C}_{z_0}$
and $h$ is homeomorphism on $\mathcal{N} \cap \mathbb{C}_{z_0}$,
it follows that the image under $h$
of the punctured disk $\mathcal{N} \cap \mathbb{C}_{z_0} = \{ (z_0, w) : 0<|w|<r\}$ is
surrounded by the image under $h$ 
of the outer boundary $\{ |w| = r \}$.
Therefore,
$\psi_2$ is bounded on $\mathcal{N}$.
Thanks to Riemann's extension theorem, 
$\psi_2$ extends to a holomorphic map from $\mathcal{N}$ to $N$.
Thanks to Hartogs' extension theorem,
$\psi_2$ extends to a holomorphic map from $N \cup \mathcal{V}$ to $V$.
Similar arguments hold for Cases 2 and 4.
For Case 2,
assuming that $V$ intersects the $w$-axis,
we can show that $\psi_2$ is bounded on $\{ |z|<r, |w|<r^{1 + l_1} \} \setminus \{ z = 0 \}$.
To show it,
we restrict $\psi_2$ to the horizontal lines.
For Case 4,
if $V$ intersects the $z$-axis and $w$-axis,
then we can show that $\psi_2$ is bounded on 
$\{ |z| < r^{1 + l_2^{-1}}, |w| < r^{1 + l_1 (1 + l_2^{-1})} \} \setminus \{ zw = 0 \}$.
To show it,
we consider the restrictions of $\psi_2$ to both the vertical fibers and horizontal lines.
\end{proof}
 
It follows from the construction that
$\psi (\mathcal{V}) \subset A_f (U)$. 
If one can prove that 
$\psi_2$ is bounded on $\mathcal{V} \cap K$
for any compact set $K$,
then it is clear that $\psi_2$ extends holomorphically
from $\mathcal{V}$ to $V$
thanks to Riemann's extension theorem. 

\begin{rem}
The open set, on which we constructed 
the B\"{o}ttcher coordinate near infinity for a polynomial skew product $f$ in \cite{u1},
is disjoint from the critical set of  $f$.
However,
the open set relates to the critical set of the rational extension of $f$ as follows.
We can extend $f$ to the rational map on a weighted projective space.
Then the line at infinity is included in the critical set of the rational map,
and intersects the closure of the open set.
\end{rem}


\end{document}